\documentclass[10pt]{amsart}
\usepackage{latexsym}
\usepackage[mathscr]{euscript}
\usepackage{amsthm}
\usepackage{amssymb}
\usepackage{xspace}
\usepackage{amsmath}
\usepackage[all]{xy}
\usepackage{amsmath,amssymb,epsfig}

\setlength{\oddsidemargin}{0in}
\setlength{\evensidemargin}{0in}
\setlength{\textheight}{8.6in}
\setlength{\textwidth}{6in}

\newtheorem{theorem}{Theorem}
\newtheorem{lemma}[theorem]{Lemma}

\title{A Lattice of Finite-Type Invariants of Virtual Knots}
\author{Micah W. Chrisman}
\begin{document}
\keywords{Polyak algebra, finite-type invariants, virtual knots}
\subjclass[2000]{57M25,57M27}
\begin{abstract} We construct an infinite commutative lattice of groups whose dual spaces give Kauffman finite-type invariants of long virtual knots. The lattice is based ``horizontally'' upon the Polyak algebra and extended ``vertically'' using Manturov's functorial map $f$. For each $n$, the $n$-th vertical line in the lattice contains an infinite dimensional subspace of Kauffman finite-type invariants of degree $n$.  Moreover, the lattice contains infinitely many inequivalent extensions of the Conway polynomial to long virtual knots, all of which satisfy the same skein relation. Bounds for the rank of each group in the lattice are obtained. 
\end{abstract}
\maketitle
\section{Introduction} \label{intro}
\subsection{Overview} In \cite{GPV}, it was shown that there exists a sequence of finitely generated abelian groups $\vec{\mathscr{P}}_t$ and surjections $\vec{\mathscr{P}}_t \to \vec{\mathscr{P}}_{t-1}$:
\[
\xymatrix{
\ldots \ar[r] & \vec{\mathscr{P}}_t \ar[r] & \vec{\mathscr{P}}_{t-1}\ar[r] & \ldots \ar[r] & \vec{\mathscr{P}}_2 \ar[r] & \vec{\mathscr{P}}_1},
\]
such that the elements of $\text{Hom}_{\mathbb{Z}}(\vec{\mathscr{P}}_t,\mathbb{Q})$ are Kauffman finite-type invariants of long virtual knots of degree $\le t$. The sequence contains many interesting \emph{classical} knot invariants.  For example, the Conway polynomial has several combinatorial formulae which lie in this group.
   
\begin{equation} \label{commlatt}
\begin{array}{c}
\xymatrix{
\ldots \ar[r] & \ar[d]_{d_t[\infty \to m]} \vec{\mathscr{X}}_t[\infty] \ar[r] & \ar[d]_{d_{t-1}[\infty \to m]} \vec{\mathscr{X}}_{t-1}[\infty] \ar[r] & \ldots \ar[r] & \ar[d]_{d_2[\infty \to m]} \vec{\mathscr{X}}_2[\infty] \ar[r] & \ar[d]_{d_1[\infty \to m]} \vec{\mathscr{X}}_1[\infty] \\
\cdots & \ar[d]_{d_t[3 \to 2]} \vdots & \ar[d]_{d_{t-1}[3 \to 2]} \vdots & \cdots & \ar[d]_{d_2[3 \to 2]} \vdots & \ar[d]_{d_1[3 \to 2]}  \vdots \\
\ldots \ar[r] & \ar[d]_{d_t[2 \to 1]} \vec{\mathscr{X}}_t[2] \ar[r] & \ar[d]_{d_{t-1}[2\to 1]} \vec{\mathscr{X}}_{t-1}[2] \ar[r] & \ldots \ar[r] & \ar[d]_{d_2[2 \to 1]} \vec{\mathscr{X}}_2[2] \ar[r] & \ar[d]_{d_1[2 \to 1]} \vec{\mathscr{X}}_1[2] \\ 
\ldots \ar[r] & \vec{\mathscr{X}}_t[1] \ar[r] & \vec{\mathscr{X}}_{t-1}[1] \ar[r] & \ldots \ar[r] & \vec{\mathscr{X}}_2[1] \ar[r] & \vec{\mathscr{X}}_1[1] \\
}
\end{array}
\end{equation}

Many examples of finite-type invariants are beyond description of these groups \cite{C1,C2,CM}. In \cite{CM}, the sequence of Polyak groups was extended by parity to a sequence of Kauffman finite-type invariants which contain many invariants which are not of Goussarov-Polyak-Viro finite-type. Each of the groups has finite rank. In the present paper, we construct a commutative \emph{lattice} of groups $\vec{\mathscr{X}}_t[m]$ with surjective arrows (see Equation \ref{commlatt}). The lattice satisfies the following properties (denoted throughout as Properties \ref{prop1}-\ref{prop5}).

\begin{enumerate}
\item \label{prop1} The elements of the dual space $\text{Hom}_{\mathbb{Z}}(\vec{\mathscr{X}}_t[m],\mathbb{Q})$ yield Kauffman finite-type invariants of degree $\le t$.

\item \label{prop2} The group $\vec{\mathscr{X}}_t[m]$ contains an isomorphic copy of the Polyak group $\vec{\mathscr{P}}_t$ which determines the value of an invariant in $\text{Hom}_{\mathbb{Z}}(\vec{\mathscr{X}}_t[m],\mathbb{Q})$ on the set of classical knots.

\item \label{prop3} The lattice contains combinatorial representations of infinitely many inequivalent extensions of the Conway polynomial to long virtual knots. 

\item \label{prop4} For each $t$, the $t$-th column in the lattice contains combinatorial representations of an infinite dimensional subspace of finite-type invariants of degree $t$.

\item \label{prop5} The rank of $\rho_t[m]$ of $\vec{X}_t[m]$ satisfies:
\[
\frac{t+1}{m} {m+t \choose 1+t} \le \rho_t[m] \le \Omega_t[m],
\]
where $\Omega_t[m]$ is given in Section \ref{omegatm}.
\end{enumerate}

The groups $\vec{\mathscr{X}}_t[m]$ in the commutative lattice are constructed as a \emph{generalized Polyak group} \cite{GPV, CM}.  In other words, it is a quotient of a free abelian group of labelled Gauss diagrams by some relations which roughly correspond to the sum of subdiagrams of all Reidemeister relations.  In our case, the labels will come from iterates of Manturov's functorial map $f$.

The organization of this paper is as follows.  In the remainder of Section \ref{intro}, we review virtual knot theory which is relevant to the present paper. In Section \ref{fmlabel}, we define the groups in the lattice, show that it is commutative, and establish Properties \ref{prop1} and \ref{prop2}.  In Section \ref{fmcon}, we define the extensions of the Conway polynomial and verify Property \ref{prop3}.  In Section \ref{prop4sec}, we establish Property \ref{prop4}.  Finally, in Section \ref{prop5sec}, we prove the bounds on the rank of the lattice groups given in Property \ref{prop5}.
 
\subsection{Acknowledgements} The idea to use $f^m$-labelling to create combinatorial formulae was suggested to the author by V.O. Manturov. The author is also indebted to him for the properties of the map $f$, which are used throughout. Also, the author would like the thank him for a several careful readings of earlier drafts of this paper. This paper was originally titled ``Combinatorial Formulae for Finite-Type Invariants of Virtual Knots" and was presented at Knots in Poland III.  This version is a substantial rewrite reflecting conversations with A. Gibson. The author is grateful for his interest in this work.  In addition, M. Polyak and H. Morton asked questions after and during (respectively) the presentation.  The answers to those questions is contained herein.

\subsection{Background} Let $\mathscr{D}$ denote the set of Gauss diagrams on $\mathbb{R}$ or the set of Gauss diagrams on $S^1$.  In diagrammatic form, the Reidemeister moves may be written as in Figure \ref{gaussmoves}.  Here, the total number of necessary Reidemeister moves has been reduced via \"{O}stlund's theorem \cite{Ost} as in \cite{GPV}. Two Gauss diagrams $D$, $D'$ are said to be Reidemeister equivalent if there is a sequence $D=D_1 \leftrightarrow D_2 \leftrightarrow \ldots \leftrightarrow D_n=D'$ of Reidemeister moves transforming $D$ into $D'$.  
\begin{figure}[h]
\[
\begin{array}{cc} \xymatrix{ \begin{array}{c}\scalebox{.25}{\psfig{figure=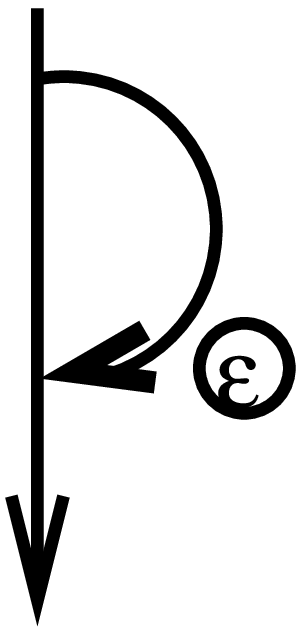}} \end{array} \ar[r]^{\Omega 1} & \ar[l] \begin{array}{c}\scalebox{.25}{\psfig{figure=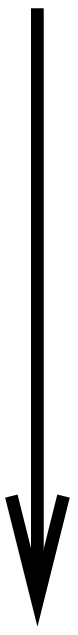}}\end{array} } & \hspace{.5cm} \xymatrix{ \begin{array}{c}\scalebox{.25}{\psfig{figure=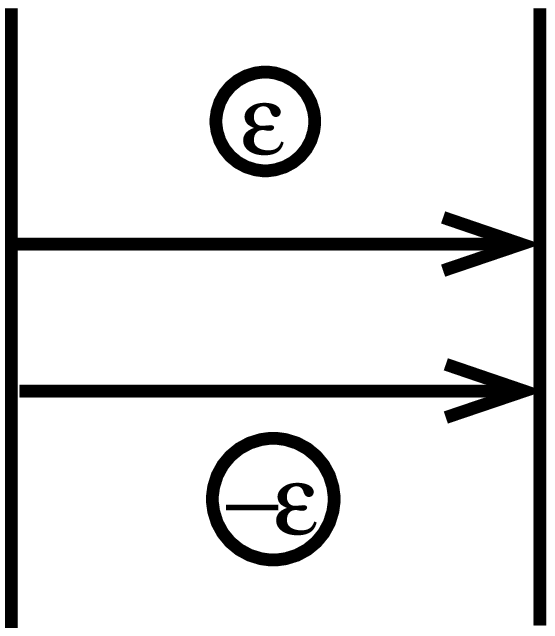}} \end{array} \ar[r]^{\Omega 2} & \ar[l] \begin{array}{c} \scalebox{.25}{\psfig{figure=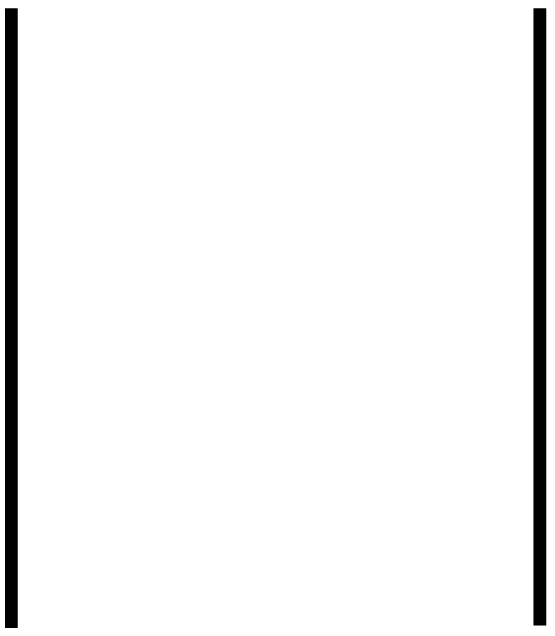}} \end{array}}
\end{array}
\]
\[  
\begin{array}{c}
\xymatrix{ \begin{array}{c} \scalebox{.25}{\psfig{figure=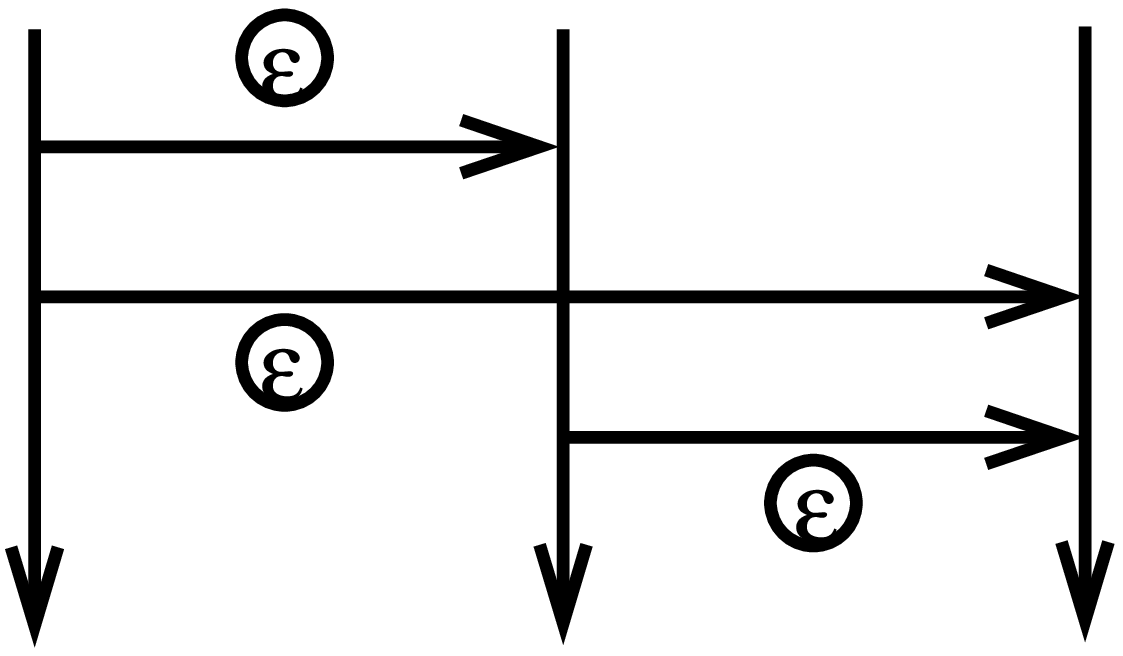}} \end{array} \ar[r]^{\Omega 3} & \ar[l] \begin{array}{c}
\scalebox{.25}{\psfig{figure=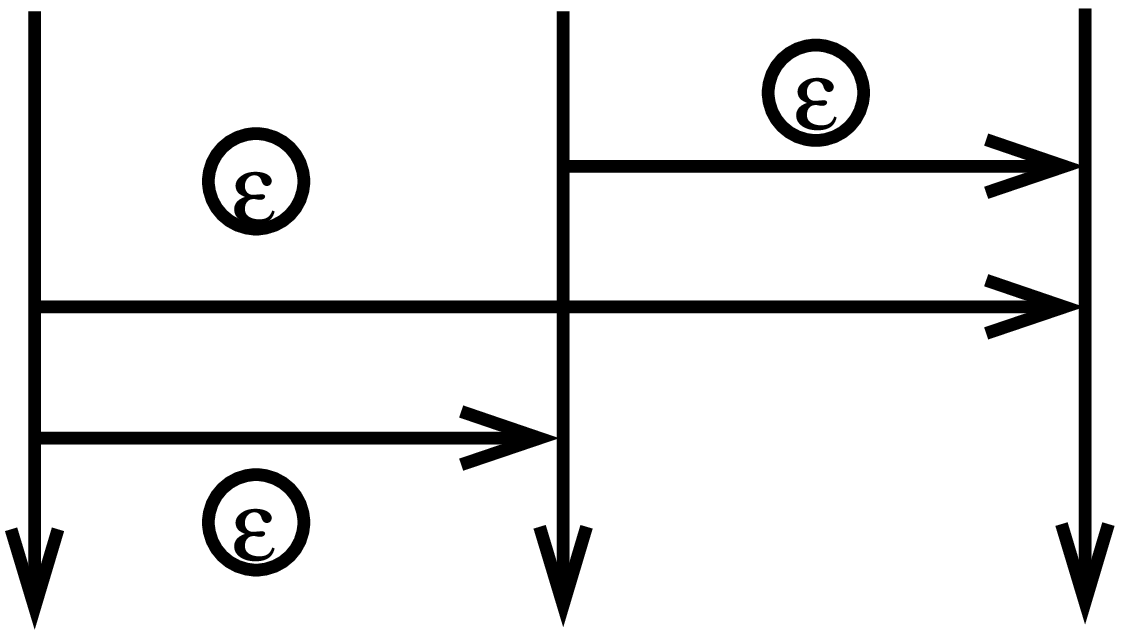}}\end{array} } \end{array}
\]
\caption{Sufficient set of  Reidemeister moves in Gauss diagram notation}\label{gaussmoves}
\end{figure}

The lattice is constructed using parity \cite{Ma01,Ma2,Ma6,Ma7,Ma8,IMN}. Let $D$ be a Gauss diagram. Let $C(D)$ denote the set of arrows of $D$. If $D \leftrightharpoons D'$ is a Reidemeister move, then there is a one-to-one correspondence between arrows not involved in the move. For $w \in C(D)$, we denote the corresponding unaffected arrow as $w' \in C(D')$. Let $\mathscr{D}^{(1,0)}$ denote the set of Gauss diagrams where each arrow is labelled with an element of $\mathbb{Z}_2=\{0,1\}$.  A \emph{parity} is a function $P:\mathscr{D} \to \mathscr{D}^{(1,0)}$ satisfying the following four properties.
\begin{enumerate}
\item If $D \in \mathscr{D}$ has an arrow $x$ with consecutive endpoints then $P$ assigns the label $0$ to $x$.
\item If $D \in \mathscr{D}$ and $x,y \in C(D)$ have opposite sign and are embedded as the two affected arrows in a Reidemeister 2 move, then $P$ assigns the same label to $x$ and $y$. 
\item Suppose that $D \rightleftharpoons D'$ is a Reidemeister 3 move. Let $\{x,y,z\}$  denote the set of arrows of $D$ which are changed by the move and $\{x',y',z'\}$ the set of corresponding arrows in $D'$. Then $P$ assigns the label $1$ to either zero or two elements of $\{x,y,z\}$. If $t \in \{x,y,z\}$, then $P$ assigns the same label to $t$ and $t'$ in $\{x',y',z'\}$.
\item If $D \rightleftharpoons D'$ is any Reidemeister move, and $(y,y')$ is a corresponding pair of unaffected arrows, then $P$ assigns the same label to $y$ and $y'$.      
\end{enumerate}

The standard example of a parity is the \emph{Gaussian parity}.  Let $D$ be a Gauss diagram. To $D$ we associate its \emph{intersection graph}. Two arrows $a$ and $b$ are said to {\em intersect} (or to be {\em linked}) if their endpoints alternate on $\mathbb{R}$ or $S^1$. We write $(a,b)=(b,a)=1$ if $a$ and $b$ intersect and $(a,b)=0$ otherwise. The intersection graph is the graph with a vertex for each arrow of the diagram and an edge between two vertices $a$ and $b$ exactly when $(a,b)=1$

Given a Gauss diagram $D$ and its intersection graph $G$, the Gaussian parity is defined as follows.  An arrow in $D$ is labelled $1$ if the degree of its vertex in $G$ is odd and a $0$ if the degree of its vertex in $G$ is even. It is easy to see that this definition satisfies the parity axioms.

We will say that $P$ is a \emph{parity of flat virtual knots} if for all diagrams $D$, $D'$ such that $D'$ is obtained from $D$ be changing the direction of an arrow, then $P$ assigns the same label to corresponding arrows of $D$ and $D'$.  For example, the Gaussian parity is a parity of flat virtual knots.

Lastly, we will need the functorial map $f:\mathbb{Z}[\mathscr{D}] \to \mathbb{Z}[\mathscr{D}]$ due to Manturov (see e.g. \cite{CM}). Let $P$ be any parity. $f(D)$ is defined to be the Gauss diagram which deletes all arrows in $D$ which are odd with respect to $\mathscr{P}$. We note that if $P(D)$ has all arrows marked $0$ then $f(D)=D$. Also note that if $D$ and $D'$ are related by a Reidemeister move, then either $f(D)=f(D')$ or $f(D)$ and $f(D')$ are related by a Reidemeister move.

\subsubsection{Finite-Type Invariants of Virtual Knots} There are two two notions of finite-type invariants of virtual knots. The first type is the natural generalization of Vassiliev invariants to virtual knots. Finite-type invariants such as these were first studied by Kauffman. Therefore, we say that an invariant of virtual knots is said to be of \emph{Kauffman finite-type of degree} $\le n$ if it vanishes on all diagrams having more than $n$ graphical vertices \cite{KaV}.  Graphical vertices are defined via the following filtration:
\[
\begin{array}{c} \scalebox{.18}{\psfig{figure=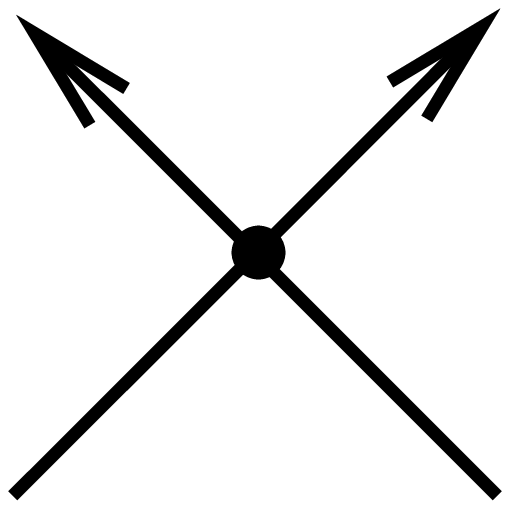}} \\ \end{array}:= \begin{array}{c} \scalebox{.18}{\psfig{figure=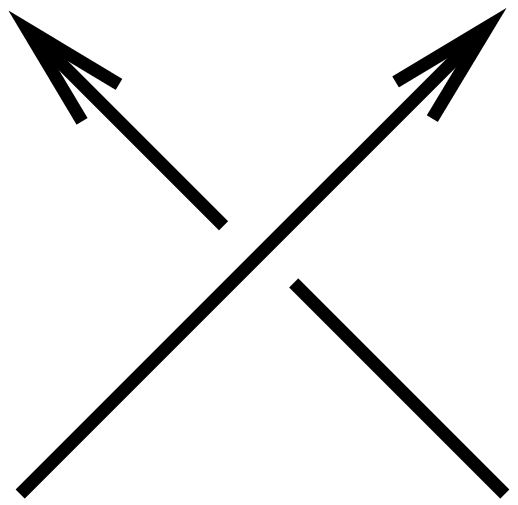}} \\ \end{array}-\begin{array}{c} \scalebox{.18}{\psfig{figure=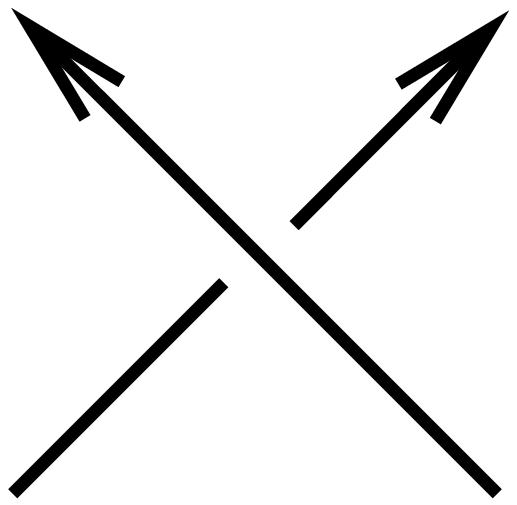}} \\\end{array}.
\]

The second kind of finite-type invariants of virtual knots arises from the \emph{Polyak groups}. This notion was originally studied by Goussarov, Polyak, and Viro. Let $\vec{\mathscr{A}}$ denote the set of Gauss diagrams with arrows drawn as dashed lines. Let $\vec{A}^t$ denote those diagrams having more than $t$ arrows. Let $\mathbb{Z}[\vec{\mathscr{A}}]$ denote the free abelian group generated by $\vec{\mathscr{A}}$. The Polyak algebra \cite{GPV} has relations given in Figure \ref{polyak}.

\begin{figure}
\[
\underline{\text{P1}:}
\,\,\,\begin{array}{c}\scalebox{.15}{\psfig{figure=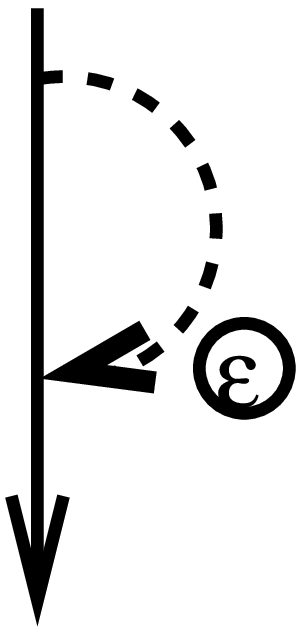}}
\end{array} =0,\,\,\, \underline{\text{P2}:}
\,\,\,\begin{array}{c}\scalebox{.15}{\psfig{figure=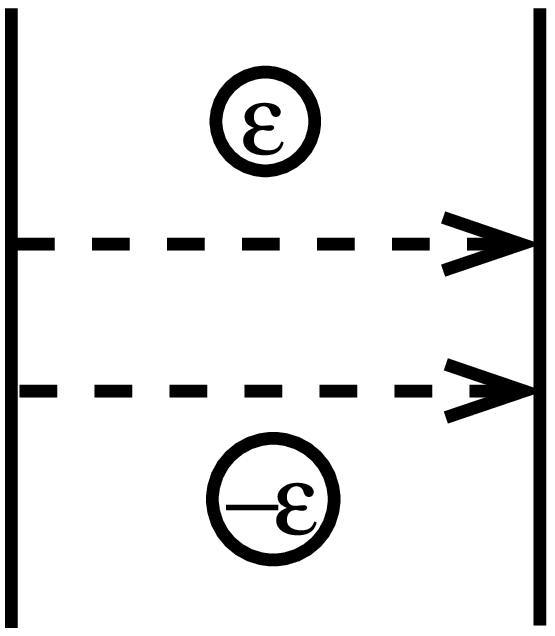}}
\end{array}+\begin{array}{c}\scalebox{.15}{\psfig{figure=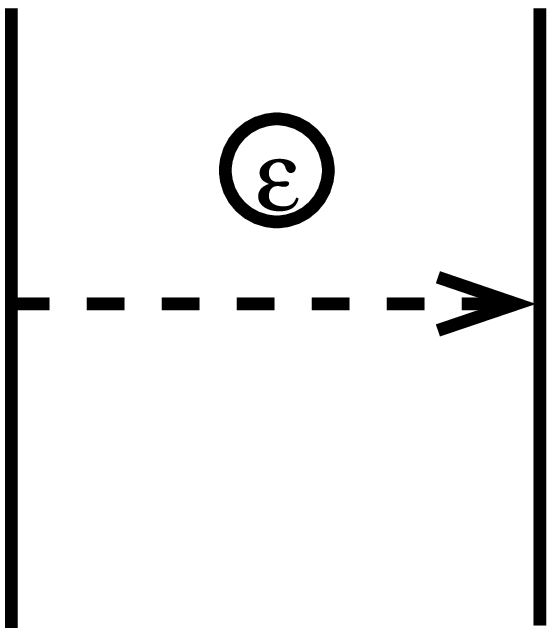}}
\end{array}+\begin{array}{c}\scalebox{.15}{\psfig{figure=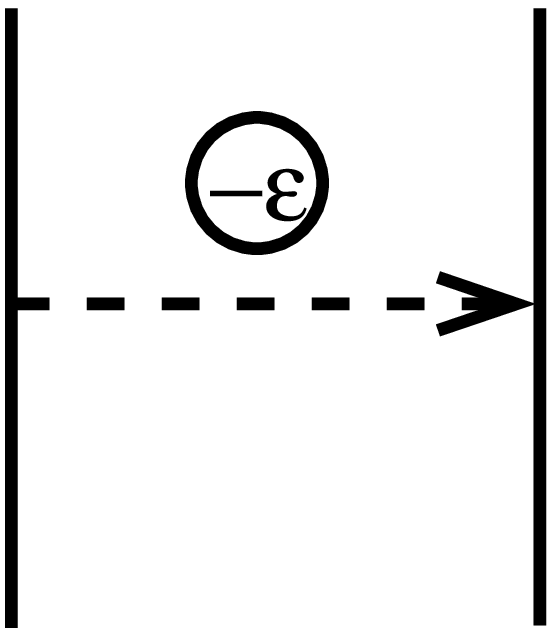}}
\end{array}=0,
\]
\begin{eqnarray*}
\underline{\text{P3}:} \,\,\,\begin{array}{c}\scalebox{.15}{\psfig{figure=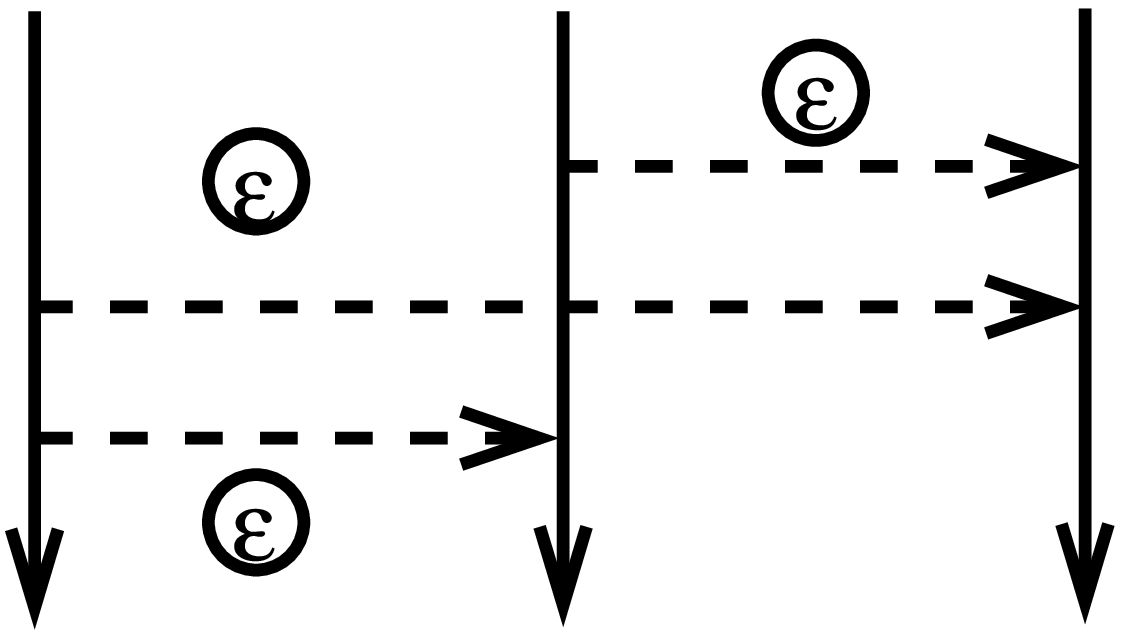}} \end{array}+\begin{array}{c}\scalebox{.15}{\psfig{figure=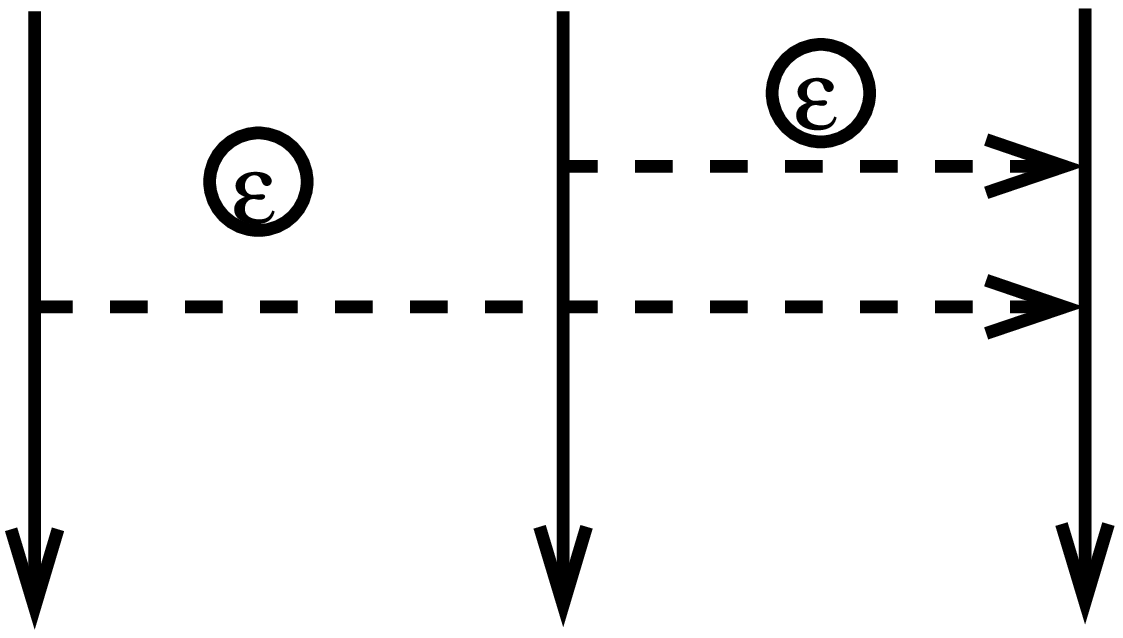}} \end{array}+\begin{array}{c}\scalebox{.15}{\psfig{figure=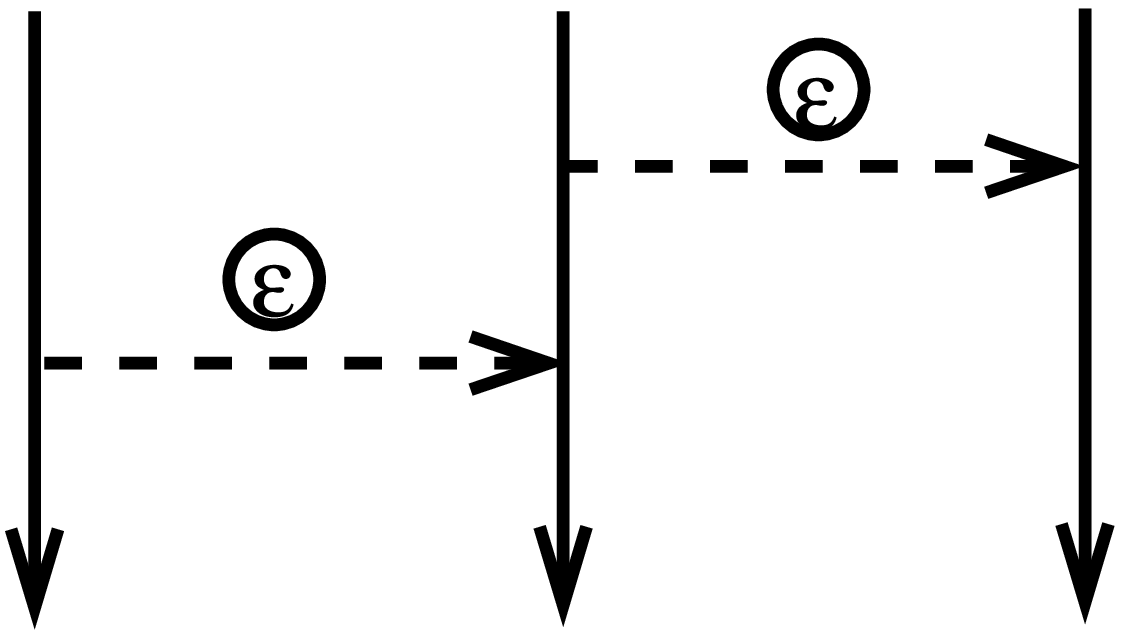}} \end{array}+\begin{array}{c}\scalebox{.15}{\psfig{figure=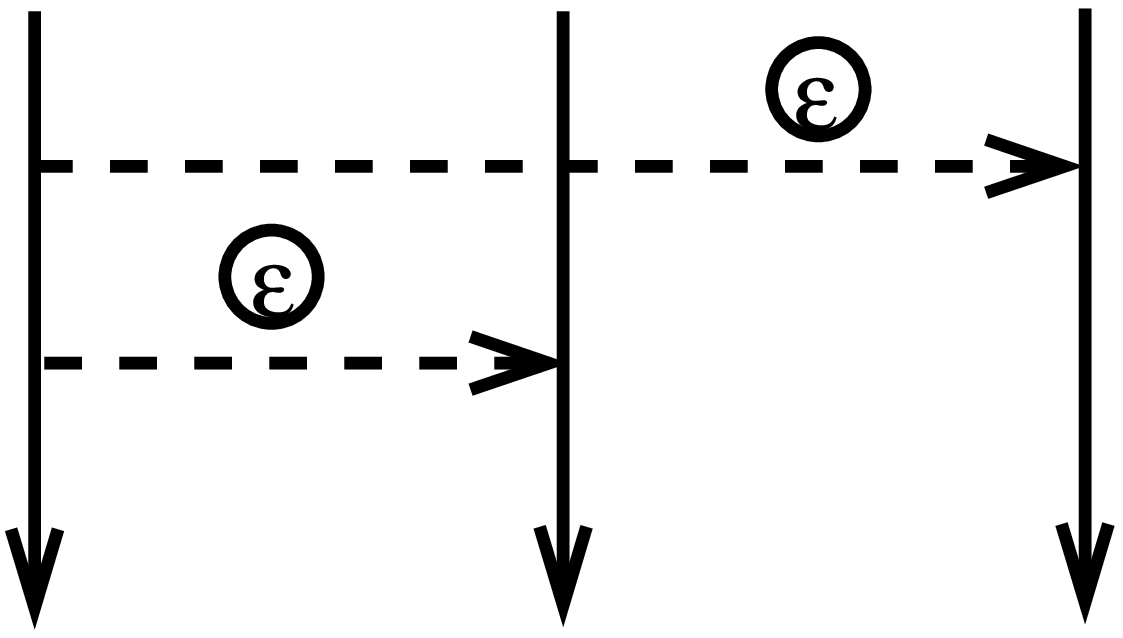}} \end{array} &=& \\ \begin{array}{c}\scalebox{.15}{\psfig{figure=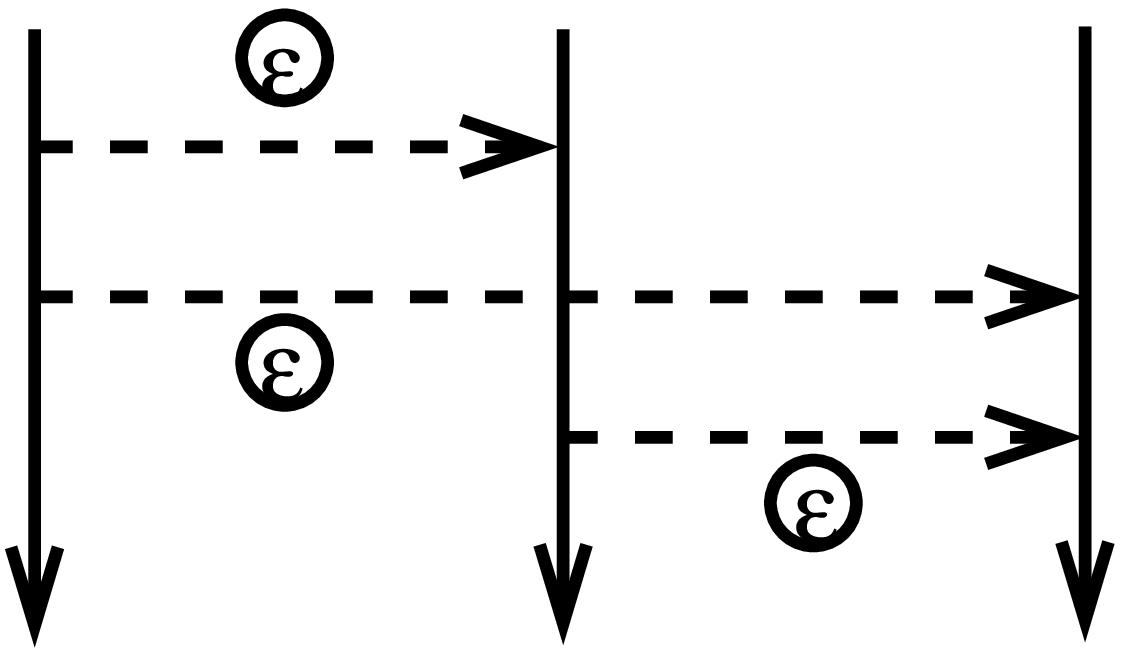}} \end{array}+\begin{array}{c}\scalebox{.15}{\psfig{figure=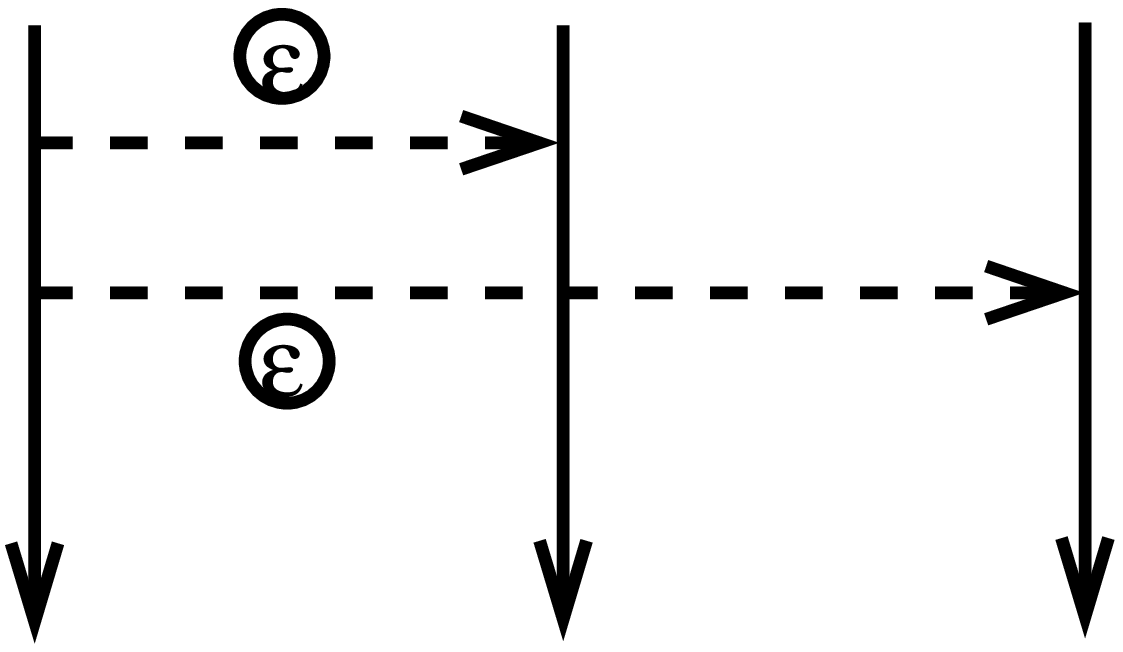}} \end{array}+\begin{array}{c}\scalebox{.15}{\psfig{figure=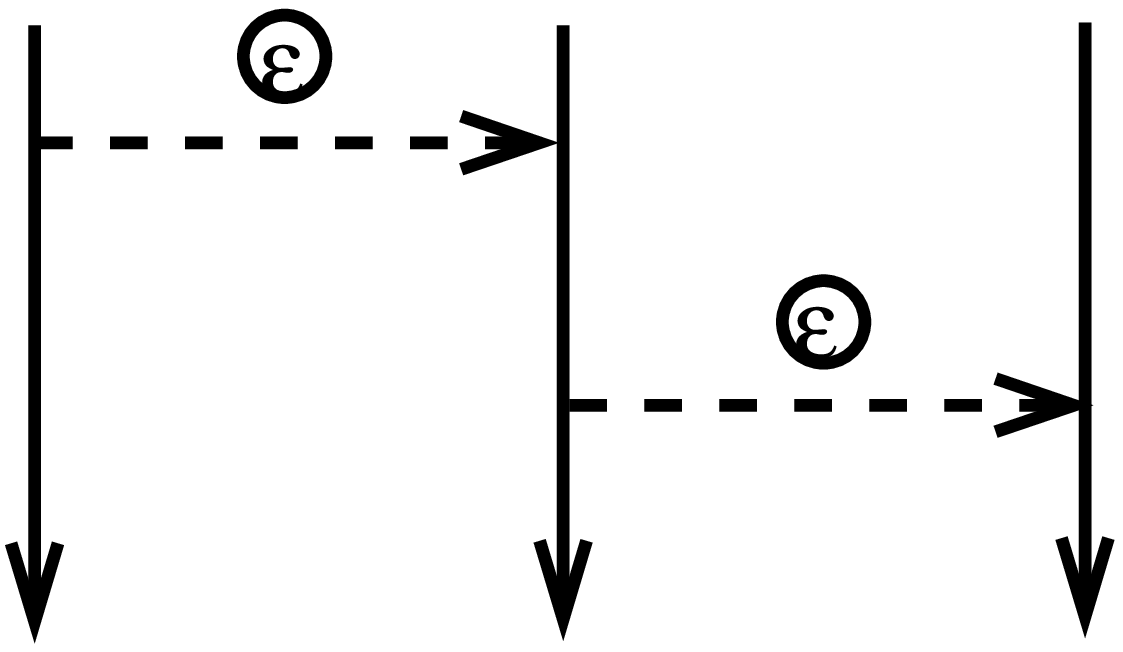}} \end{array}+\begin{array}{c}\scalebox{.15}{\psfig{figure=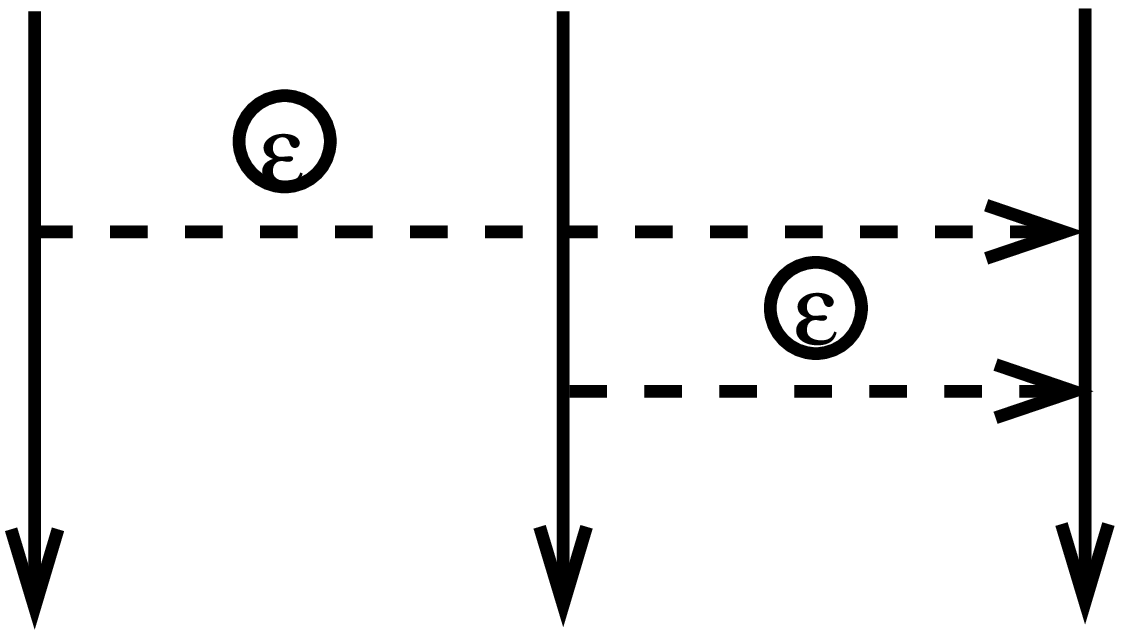}} \end{array} & & \\
\end{eqnarray*}
\caption{Polyak Relations} \label{polyak}
\end{figure}

The \emph{Polyak group} is given by the quotient:
\[
\vec{\mathscr{P}}_t=\frac{\mathbb{Z}[\vec{\mathscr{A}}]}{\left<\vec{\text{P1}},\vec{\text{P2}},\vec{\text{P3}},\vec{A}_t \right>}.
\]

The virtual knot invariants that arise from the Polyak groups are defined using the \emph{subdiagram map} $I$:
\[
I(D)=\sum_{D' \subset D} i(D'),
\] 
where $i$ makes every arrow of $D'$ dashed and the sum is taken over all subdiagrams of $D$.  If $v \in \text{Hom}_{\mathbb{Z}}(\mathscr{P}_t,\mathbb{Q})$, then $v \circ I$ is a virtual knot or virtual long knot invariant. Moreover, $v \circ I$ is a Kauffman finite-type invariant of degree $\le t$ \cite{GPV}.

Not all Kauffman finite-type invariants are represented by these groups. Those invariants which factor through $\vec{\mathscr{P}}_t$ are said to be of \emph{Goussarov-Polyak-Viro finite-type}.

\section{Properties \ref{prop1},\ref{prop2}: The Lattice of Finite-Type Invariants} \label{fmlabel} The present section defines the $f^m$-labelled Polyak groups and shows how they form the commutative lattice given in Equation \ref{commlatt}.  In addition, Properties \ref{prop1} and \ref{prop2} are established.

\subsection{Definition and Commutativity of the Lattice} Let $P$ be any parity. Let $m \in \mathbb{N} \cup \{\infty\}$. For a Gauss diagram $D$, consider $P(D)$ (i.e. the diagram $D$ with arrows labelled as prescribed by $P$).  For an arrow $x \in C(D)$, let $i$ be the smallest number $1 \le i \le m$ such that $x \notin C(f^i(D))$.  If $x \in C(f^i(D))$ for all $i$, $1 \le i \le m$ set $i=m+1$ (or $i=\infty$ if $m=\infty$).  The \emph{label} of the arrow $x$ is the natural number $i$.  A labelling of a Gauss diagram according to this procedure will be called an $f^m$-\emph{labelling}. The $f^m$-labelling of a Gauss diagram $D$ satisfies the following properties:
\begin{enumerate}
\item The label of an isolated arrow is $m+1$.
\item The label of two arrows involved in an $\Omega 2$ move are identical. Deleting the two arrows in the move does not affect the $f^m$-labelling of the other arrows in the diagram.
\item The labels $\{i,j,k\}$ of the corresponding arrows on LHS and RHS of an $\Omega 3$ move are the same. Also, the labels satisfy one of the relations: $i>j=k$, $j>i=k$, $k>i=j$, $i=j=k=m+1$.
\end{enumerate}
Let $\vec{\mathscr{A}}[m]$ denote the set of signed arrow diagrams where all of the arrows are arbitrarily labelled from $1$ to $m+1$ and the arrows are drawn formally \emph{dashed}. We define the map $\Lambda[m]:\mathbb{Z}[\mathscr{D}] \to \mathbb{Z}[\vec{\mathscr{A}}[m]]$ to be the map which assigns the $f^m$-labelling to each Gauss diagram and makes all of the arrows dashed. Define the map $I[m]:\mathbb{Z}[\vec{\mathscr{A}}[m]] \to \mathbb{Z}[\vec{\mathscr{A}}[m]]$ by: 
\[
I[m](D)=\sum_{F \subset D} F,
\]
where the sum is over all subdiagrams of $D$.  Note that the label and sign of each arrow is preserved in the subdiagram. 

We define some relations on $\mathbb{Z}[\vec{\mathscr{A}}[m]]$ as follows:
\[
\underline{\vec{\text{Q1}}[m]}:\,\,\, \begin{array}{c} \scalebox{.28}{\psfig{figure=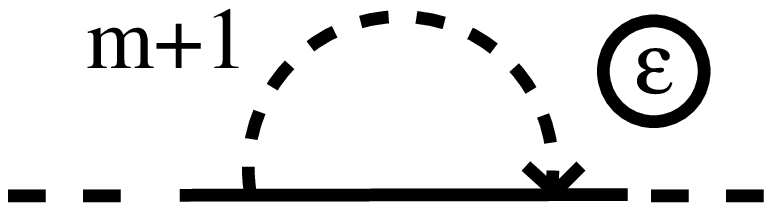}} \end{array}=0,\,\,\,\, \underline{\vec{\text{Q2}}[m]}:\,\,\, \begin{array}{c} \scalebox{.23}{\psfig{figure=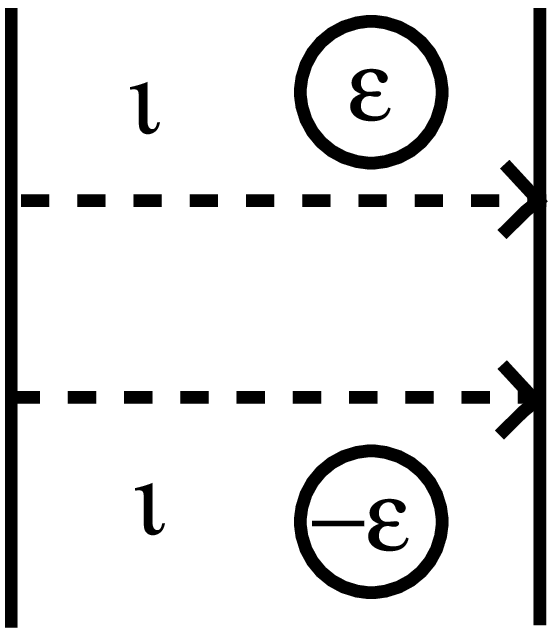}} \end{array}+\begin{array}{c} \scalebox{.23}{\psfig{figure=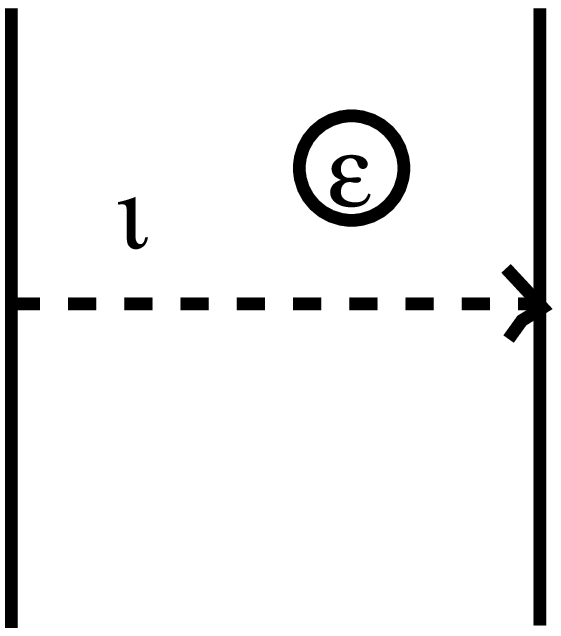}} \end{array}+\begin{array}{c} \scalebox{.23}{\psfig{figure=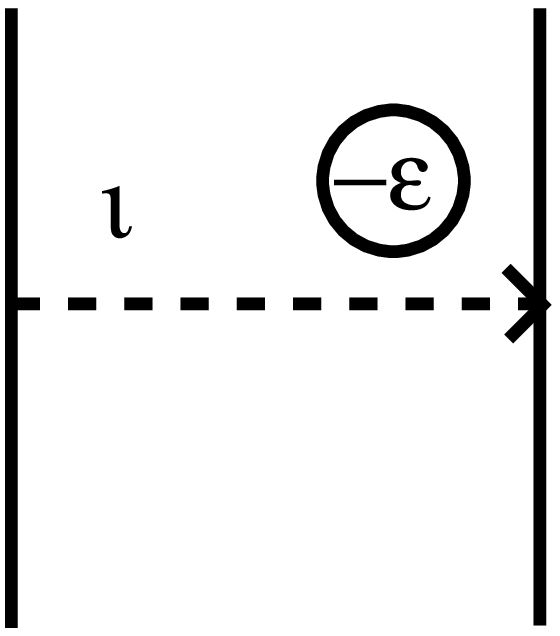}} \end{array}=0
\]
\begin{eqnarray*}
\underline{\vec{\text{Q3}}[m]}:\,\,\,  \begin{array}{c}\scalebox{.23}{\psfig{figure=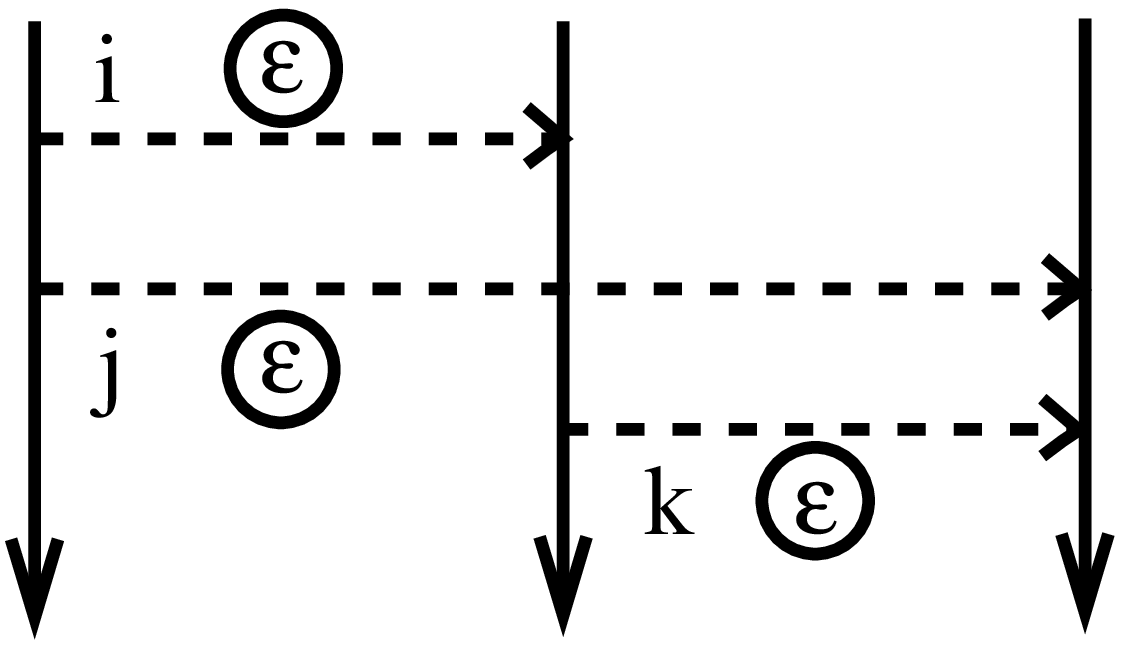}} \end{array}+\begin{array}{c}\scalebox{.23}{\psfig{figure=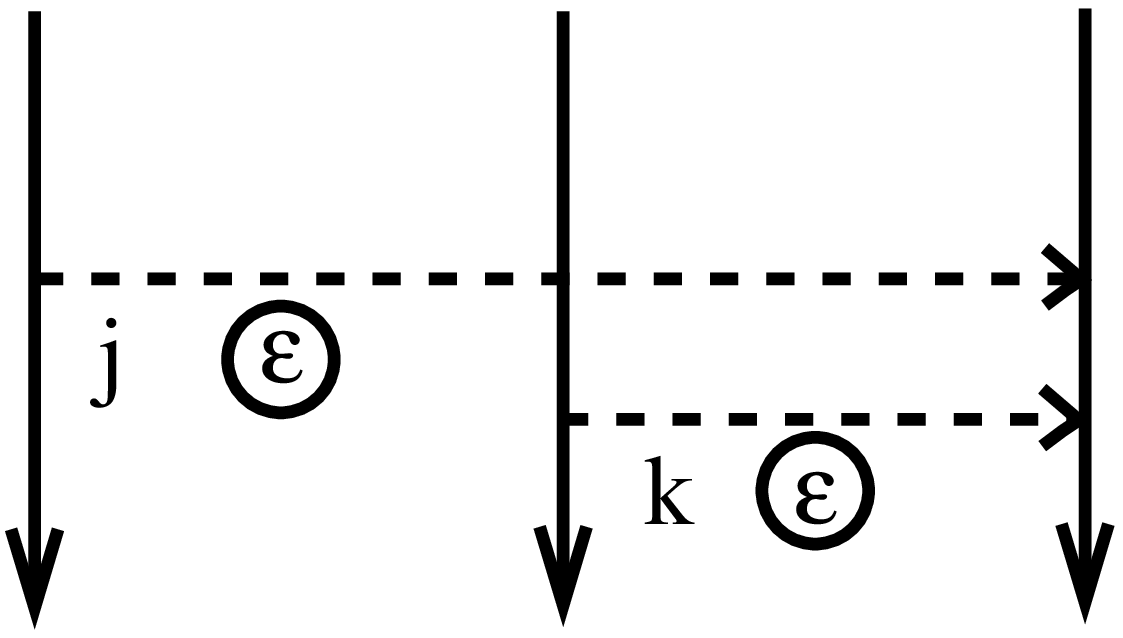}} \end{array}+\begin{array}{c}\scalebox{.23}{\psfig{figure=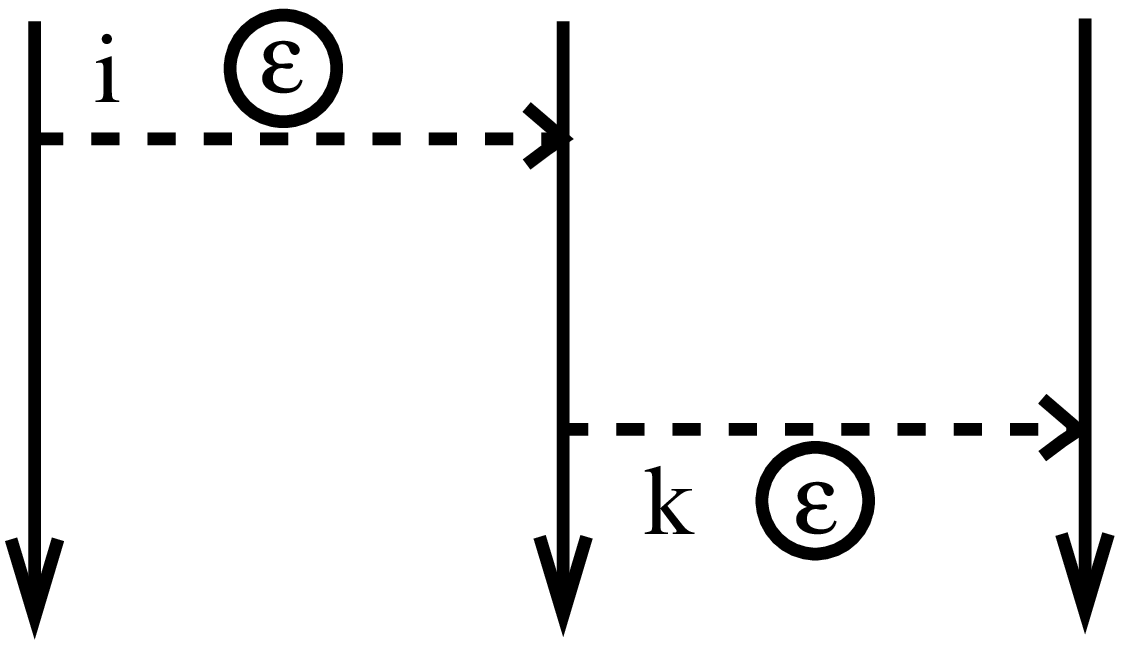}} \end{array}+\begin{array}{c}\scalebox{.23}{\psfig{figure=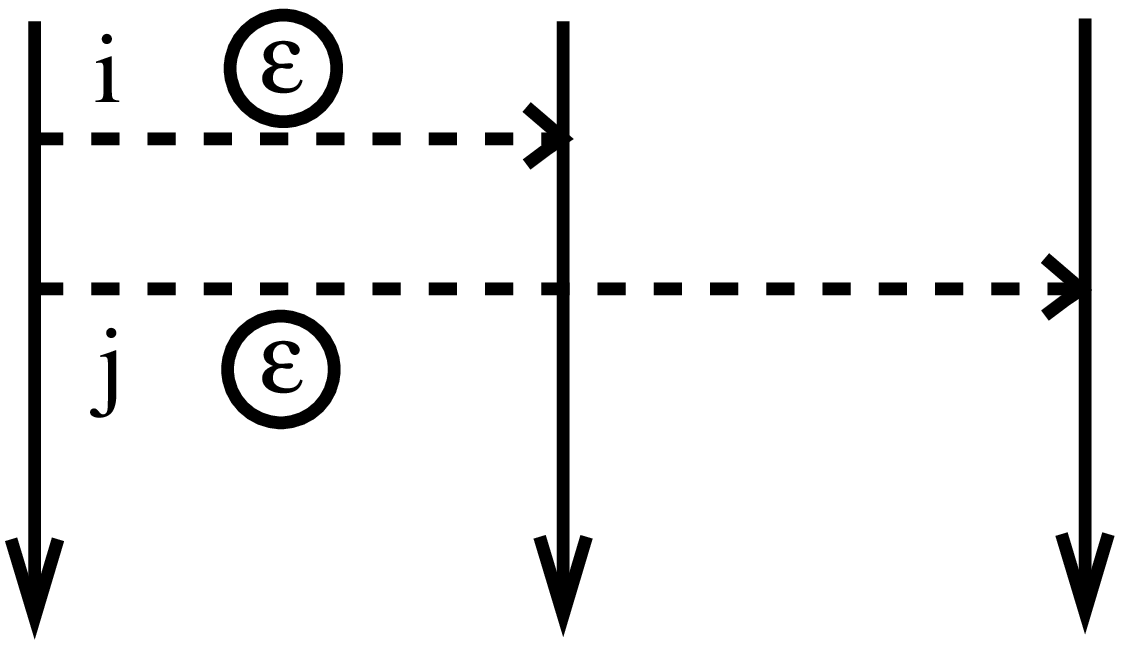}} \end{array} &=& \\ \begin{array}{c}\scalebox{.23}{\psfig{figure=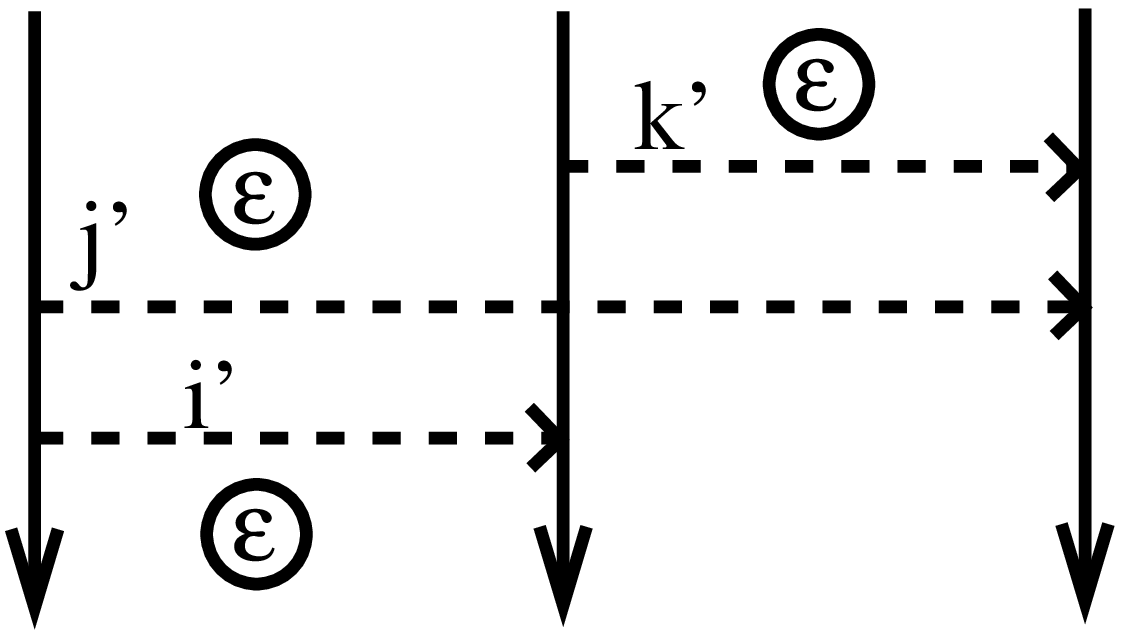}} \end{array}+\begin{array}{c}\scalebox{.23}{\psfig{figure=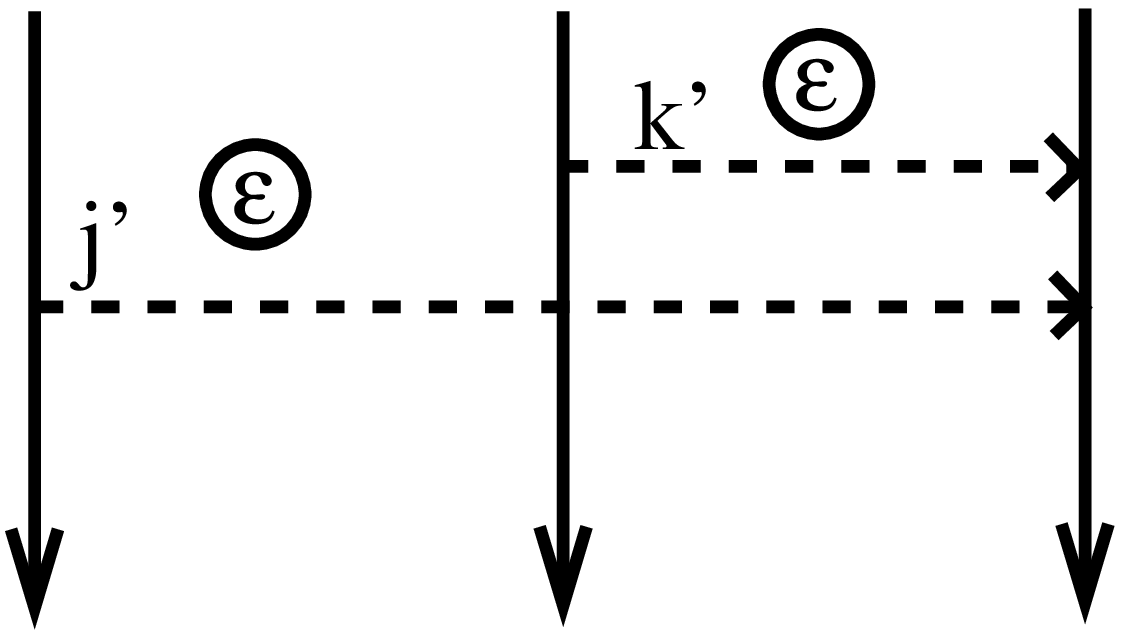}} \end{array}+\begin{array}{c}\scalebox{.23}{\psfig{figure=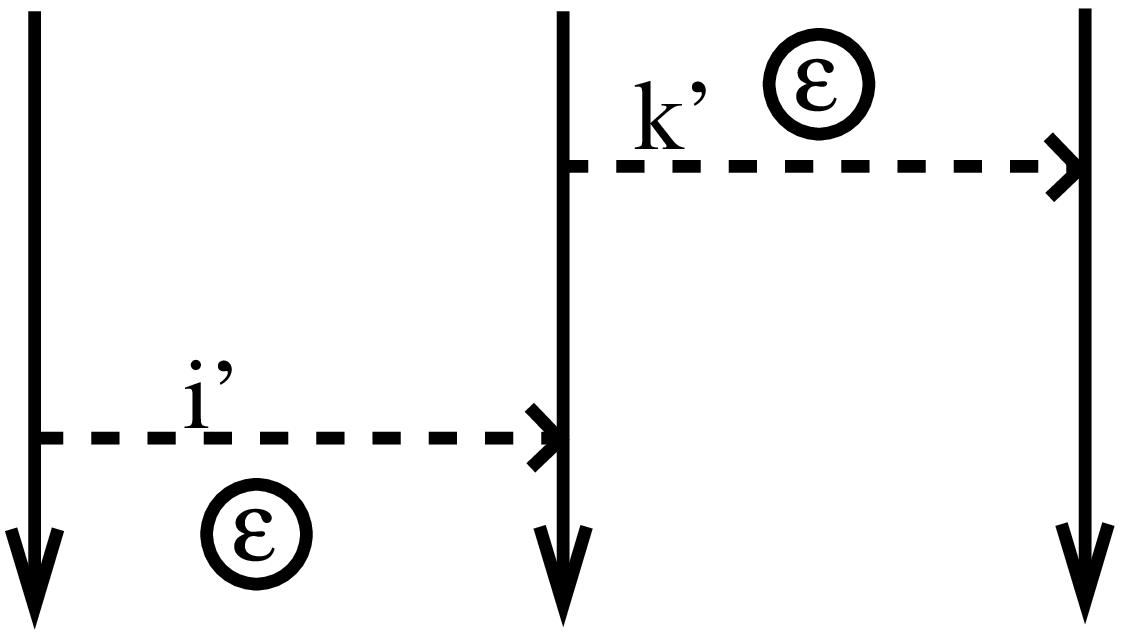}} \end{array}+\begin{array}{c}\scalebox{.23}{\psfig{figure=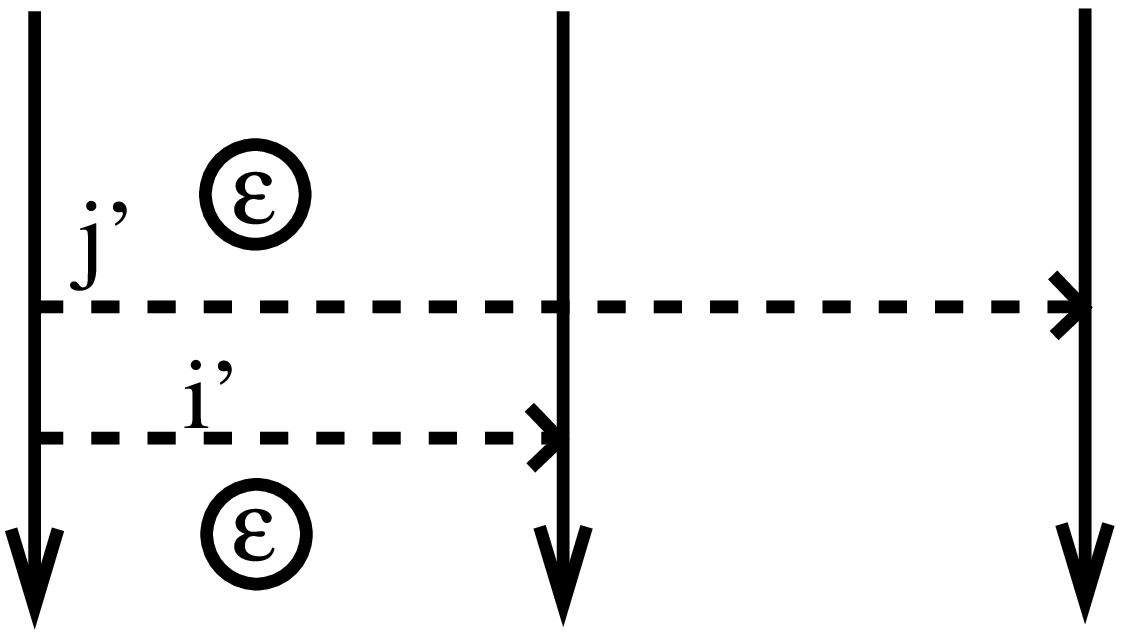}} \end{array} & & \\
\end{eqnarray*}
In $\vec{\text{Q2}}[m]$, we only have arrows with labels up to $m+1$.  In $\vec{\text{Q3}}[m]$, we include all possibilities where $i=i'$, $j=j'$, $k=k'$, and either $i>j=k$, $j>i=k$, $k>i=j$, or $i=j=k=m+1$.

We note that these relations generate the image of the relations in Figure \ref{gaussmoves} under the map $I[m] \circ \Lambda[m]$, subject to the properties of the iterates of $f$(compare with \cite{GPV}).

Let $\vec{A}_t[m]$ denote those diagrams having more than $t$ arrows and all labels $\le m+1$. We define:
\[
\vec{\mathscr{X}}[m]=\frac{\mathbb{Z}[\vec{\mathscr{A}}[m]]}{\left<\vec{\text{Q1}}[m],\vec{\text{Q2}}[m], \vec{\text{Q3}}[m]\right>},\,\,\,
\vec{\mathscr{X}}_t[m]=\frac{\mathbb{Z}[\vec{\mathscr{A}}[m]]}{\left<\vec{\text{Q1}}[m],\vec{\text{Q2}}[m], \vec{\text{Q3}}[m], \vec{A}_t[m] \right>}.
\]
For Gauss diagrams on $\mathbb{R}$, the rational vector space $\mathbb{Q} \otimes_{\mathbb{Z}} \vec{\mathscr{X}}[m]$ has the structure of an algebra.  The multiplication is given by a  map $\vec{\mathscr{A}}[m] \times \vec{\mathscr{A}}[m] \to \vec{\mathscr{A}}[m]$ which is defined by $(D_1,D_2) \to D_1D_2$, i.e. the simple concatenation of the arrow diagrams. This is the same multiplication map that we have for the Polyak algebra \cite{GPV}. We will not use the structure of the algebra, but this is what we mean by ``algebra'' in the term ``$f^m$-labelled Polyak algebra''.

The fact that that the $f^m$-labelled Polyak algebra gives rise to invariants of Kauffman finite-type follows from the definitions and arguments which are available in the literature.  For example, the $m=1$ case was considered in \cite{CM}. We record the result as a Lemma below.

\begin{lemma}[Property \ref{prop1}] \label{kauffft} Let $P$ be a parity of long flat virtual knots. If $v \in \text{Hom}_{\mathbb{Z}}(\vec{\mathscr{X}}_t[m], \mathbb{Q})$, then $v \circ I[m] \circ \Lambda[m]:\mathbb{Z}[\mathscr{D}] \to \mathbb{Q}$ is a Kauffman finite-type invariant of order $\le t$.
\end{lemma}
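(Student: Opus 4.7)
The plan is to mimic the Goussarov--Polyak--Viro argument, treating the $f^m$-labelling as an enrichment of the Polyak framework. I would split the verification into two separate claims: (a) $\nu := v\circ I[m]\circ\Lambda[m]$ descends to an invariant of Reidemeister-equivalence classes of Gauss diagrams, and (b) this invariant vanishes on any virtual knot diagram with more than $t$ graphical (double-point) vertices.

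For (a), I would fix a Reidemeister move $D\leftrightharpoons D'$ and examine the difference $(I[m]\circ\Lambda[m])(D)-(I[m]\circ\Lambda[m])(D')$ inside $\mathbb{Z}[\vec{\mathscr{A}}[m]]$. By property (4) in the definition of a parity, combined with the observation recorded in the text that $f^i(D)$ and $f^i(D')$ are either equal or related by a single Reidemeister move, the $f^m$-labels of arrows lying outside the support of the move are unchanged. Hence subdiagrams that do not meet the affected region contribute identically to both sides and cancel. The remaining contributions, indexed by which affected arrows the subdiagram retains, are governed precisely by the three properties of the $f^m$-labelling listed just after its definition: the label of an isolated arrow is $m+1$; the two arrows of an $\Omega 2$ pair carry identical labels; the three arrows of an $\Omega 3$ triple carry labels $\{i,j,k\}$ satisfying $i>j=k$, $j>i=k$, $k>i=j$, or $i=j=k=m+1$. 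These are exactly the quantifiers appearing in $\vec{\text{Q1}}[m]$, $\vec{\text{Q2}}[m]$, and $\vec{\text{Q3}}[m]$, so the difference lies in $\langle\vec{\text{Q1}}[m],\vec{\text{Q2}}[m],\vec{\text{Q3}}[m]\rangle$ and is annihilated by $v$. The hypothesis that $P$ is a parity of long \emph{flat} virtual knots is used to ensure that the opposite-signed pair of arrows in an $\Omega 2$ move receive matching $f^m$-labels, as needed for $\vec{\text{Q2}}[m]$ to apply termwise.

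For (b), I would use the standard desingularization: a Gauss diagram with $t+1$ graphical vertices expands to a signed sum of $2^{t+1}$ resolutions via the filtration given in the introduction. Applying $I[m]\circ\Lambda[m]$ to this signed sum and telescoping as in \cite{GPV}, every labelled subdiagram that omits at least one of the $t+1$ distinguished arrows contributes zero: its coefficient in the signed total is the product of $(+1)+(-1)$ over the omitted vertex. Hence only subdiagrams containing all $t+1$ distinguished arrows survive, and these lie in $\vec{A}_t[m]$, which is killed in $\vec{\mathscr{X}}_t[m]$. Therefore $v$ evaluates to $0$ on any such diagram, as required.

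The main obstacle is the bookkeeping in part (a): one must verify that on the level of $f^m$-labels the contributions outside the move genuinely cancel, and that the contributions inside match the $\vec{\text{Q}}[m]$-relations with the correct signs and labels. This reduces, by induction on $i\le m$, to checking that each iterate $f^i$ preserves the labels of unaffected arrows, which in turn follows from property (4) of the parity together with the Reidemeister-compatibility of $f$ recalled in the background section. The base case $m=1$ is treated in \cite{CM}, and the inductive step is a direct generalization.
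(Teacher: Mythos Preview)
Your overall strategy is exactly the one the paper has in mind (the paper simply defers to \cite{GPV} and \cite{CM}), and both parts (a) and (b) are structured correctly. There is, however, a genuine misplacement of the hypothesis that $P$ is a parity of \emph{flat} virtual knots.

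In part (a) you do not need the flat hypothesis. Parity axiom (2) already forces the two arrows in an $\Omega 2$ configuration to receive identical parity labels; since $f$ only deletes arrows, the surviving pair is again an $\Omega 2$ configuration in $f(D)$, and induction on $i$ gives identical $f^m$-labels. This is precisely the second property listed after the definition of the $f^m$-labelling, and it holds for any parity, flat or not.

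Where the flat hypothesis is actually required is in part (b). Resolving a graphical vertex replaces a $\oplus$ crossing by a $\ominus$ crossing, which on the Gauss diagram level reverses the direction of the corresponding arrow. Your cancellation argument (``its coefficient in the signed total is the product of $(+1)+(-1)$ over the omitted vertex'') tacitly assumes that the labelled subdiagrams coming from the two resolutions are \emph{identical} as elements of $\vec{\mathscr{A}}[m]$. But $\Lambda[m]$ is computed from the full ambient diagram, so if reversing an arrow were allowed to change the parity of other arrows, the $f^m$-labels on the common subdiagram could differ between the two resolutions and the terms would not cancel. The flat hypothesis is exactly the statement that reversing an arrow direction leaves all parity labels unchanged, hence leaves $\Lambda[m]$ unchanged; invoking it here makes the telescoping go through. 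Once you move the hypothesis to this step, your argument is complete and matches the intended proof.
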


Next we prove that the lattice is surjective and commutative.

\begin{lemma}[Horizontal Surjectivity of the Lattice] \label{horsurj} Let $\pi_t[m]:\vec{\mathscr{X}}_t[m] \to \vec{\mathscr{X}}_{t-1}[m]$ denote the natural map of the quotient spaces. The following sequence is exact for all $m \in \mathbb{Z}$:
\[
\xymatrix{\vec{\mathscr{X}}_t[m] \ar[r] & \vec{\mathscr{X}}_{t-1}[m] \ar[r] & 0}.
\]
\end{lemma}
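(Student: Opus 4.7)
The plan is to observe that surjectivity of $\pi_t[m]$ is essentially built into the construction of the lattice, so the argument reduces to unpacking the definitions of the two quotients and comparing their relation subgroups.

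The key inclusion is $\vec{A}_t[m] \subseteq \vec{A}_{t-1}[m]$: any labelled Gauss diagram with strictly more than $t$ arrows automatically has strictly more than $t-1$ arrows. Since the other generating relations $\vec{\text{Q1}}[m]$, $\vec{\text{Q2}}[m]$, $\vec{\text{Q3}}[m]$ are identical in both cases, this yields the containment of subgroups
\[
\bigl\langle \vec{\text{Q1}}[m],\vec{\text{Q2}}[m],\vec{\text{Q3}}[m],\vec{A}_t[m]\bigr\rangle \;\subseteq\; \bigl\langle \vec{\text{Q1}}[m],\vec{\text{Q2}}[m],\vec{\text{Q3}}[m],\vec{A}_{t-1}[m]\bigr\rangle
\]
inside $\mathbb{Z}[\vec{\mathscr{A}}[m]]$. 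Consequently, the identity map on $\mathbb{Z}[\vec{\mathscr{A}}[m]]$ descends to a well-defined group homomorphism $\pi_t[m]:\vec{\mathscr{X}}_t[m] \to \vec{\mathscr{X}}_{t-1}[m]$ sending the class $[D]_t$ of a generator $D \in \vec{\mathscr{A}}[m]$ to its class $[D]_{t-1}$.

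For surjectivity, every element of $\vec{\mathscr{X}}_{t-1}[m]$ admits a representative $\xi = \sum n_i D_i \in \mathbb{Z}[\vec{\mathscr{A}}[m]]$ with $D_i \in \vec{\mathscr{A}}[m]$. The same formal sum $\xi$ also represents a class in $\vec{\mathscr{X}}_t[m]$, and $\pi_t[m]([\xi]_t) = [\xi]_{t-1}$ by construction. Hence $\operatorname{im}(\pi_t[m]) = \vec{\mathscr{X}}_{t-1}[m]$, which is precisely the stated exactness. No genuine obstacle arises; the lemma is a direct tautological consequence of the quotient definitions, and this argument will be reused (mutatis mutandis) to establish vertical surjectivity once the vertical arrows $d_t[m+1\to m]$ are shown to be well defined.
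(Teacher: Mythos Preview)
Your proof is correct and follows exactly the same approach as the paper: the paper's proof is the single line ``This follows from the fact that $\vec{A}_t[m] \subseteq \vec{A}_{t-1}[m]$,'' and you have simply unpacked that observation in full detail.
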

\begin{proof} This follows from the fact that $\vec{A}_t[m] \subseteq \vec{A}_{t-1}[m]$.
\end{proof}

We now describe the vertical maps in the lattice.  We define maps for $1 \le n \le m \le \infty$ as follows:
\[
d[m \to n]: \mathbb{Z}[\vec{\mathscr{A}}[m]] \to \mathbb{Z}[\vec{\mathscr{A}}[n]].
\]
If $D \in \vec{\mathscr{A}}[m]$, relabel any arrow of $D$ having label $k>n+1$ by $n+1$. The resulting diagram is $d[m \to n](D)$. We note that by this definition, $d[m \to m]=1$ (the identity map).

\begin{lemma}[Vertical Surjectivity of the Lattice] \label{dmnlemm} For any $t \in \mathbb{N}$, $d[m \to n]: \mathbb{Z}[\vec{\mathscr{A}}[m]] \to \mathbb{Z}[\vec{\mathscr{A}}[n]]$ descends to a map of the quotients $d_t[m\to n]:\vec{\mathscr{X}}_t[m] \to \vec{\mathscr{X}}_t[n]$. Moreover, the following sequence is exact:
\[
\xymatrix{
\vec{\mathscr{X}}_t[m] \ar[r]^{d_t[m \to n]} & \vec{\mathscr{X}}_t[n] \ar[r] & 0
}.
\]
\end{lemma}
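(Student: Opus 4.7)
The plan is to verify two things in turn: first, that $d[m\to n]$ descends to $d_t[m\to n]$ by checking that each of the four relation families ($\vec{\text{Q1}}[m],\vec{\text{Q2}}[m],\vec{\text{Q3}}[m],\vec{A}_t[m]$) is sent into the subgroup generated by the corresponding relations for $n$; second, that the resulting quotient map is surjective.

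For the first three families I would proceed by case analysis on labels. The $\vec{\text{Q1}}[m]$ generator is an isolated arrow labelled $m+1$; since $m+1>n+1$ it is relabelled to $n+1$, producing a $\vec{\text{Q1}}[n]$ generator. In $\vec{\text{Q2}}[m]$ the two affected arrows carry a common label $\ell\in\{1,\dots,m+1\}$; whether $\ell\le n+1$ (no change) or $\ell>n+1$ (both drop to $n+1$), the image is a $\vec{\text{Q2}}[n]$ relation. The family $\vec{A}_t[m]$ is preserved because the cap operation does not alter the number of arrows.

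The main obstacle is $\vec{\text{Q3}}[m]$. A relation is indexed by a triple $(i,j,k)$ satisfying one of $i>j=k$, $j>i=k$, $k>i=j$, or $i=j=k=m+1$, and I need the capped triple $(i',j',k')$ with $\ell'=\min(\ell,n+1)$ to satisfy one of the analogous conditions for $n$. By symmetry it suffices to check $i>j=k$: if $i\le n+1$ nothing changes; if $i>n+1$ and $j=k\le n$, then $(n+1,j,k)$ still satisfies the first condition for $n$; in every other subcase the triple collapses to $(n+1,n+1,n+1)$ (the fourth condition), and the case $i=j=k=m+1$ collapses there as well. The verification reduces to noting that $(n+1,n+1,n+1)$ is an absorbing state for the capping operation.

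Surjectivity is then immediate on generators: any $D'\in\vec{\mathscr{A}}[n]$ already has all labels at most $n+1\le m+1$, so it may be regarded as an element of $\vec{\mathscr{A}}[m]$ on which $d[m\to n]$ acts as the identity, giving $d[m\to n](D')=D'$. Surjectivity of the induced $d_t[m\to n]$ on the quotient follows formally, in parallel with the argument for Lemma~\ref{horsurj}. The only genuinely non-routine step is the $\vec{\text{Q3}}[m]$ closure, and the rest is structural bookkeeping.
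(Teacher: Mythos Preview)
Your proof is correct and follows essentially the same approach as the paper's own proof: both check that $d[m\to n]$ preserves the number of arrows (handling $\vec{A}_t[m]$), carry the isolated $(m+1)$-labelled arrow in $\vec{\text{Q1}}[m]$ to the $(n+1)$-labelled one, use equality of the two affected labels for $\vec{\text{Q2}}[m]$, and do a short case analysis on the triple $(i,j,k)$ for $\vec{\text{Q3}}[m]$ to see that the capped triple again satisfies one of the allowed patterns. Your case split for $\vec{\text{Q3}}[m]$ is organized slightly differently from the paper's (you fix the symmetric case $i>j=k$ and subdivide on the size of $i$, whereas the paper splits on how many of $x,y,z$ are $\ge n+1$), but the content is identical, and your observation that surjectivity follows from the tautological section $\vec{\mathscr{A}}[n]\hookrightarrow\vec{\mathscr{A}}[m]$ matches the paper's one-line conclusion.
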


\begin{proof} Since $d[m\to n]$ preserves the number of arrows, it is only necessary to check the relations $\vec{\text{Q1}}[m]$, $\vec{\text{Q2}}[m]$, and $\vec{\text{Q3}}[m]$.  In a $\vec{\text{Q1}}[m]$ relation, the isolated arrow having label $m+1 \ge n+1$ gets relabelled with an $n+1$.  Therefore, the image of a $\vec{\text{Q1}}[m]$ relation is a $\vec{\text{Q1}}[n]$ relation.  For a $\vec{\text{Q2}}[m]$ relation, the labels of the affected arrows are the same and hence will be the same after the application of $d[m \to n]$.

The $\vec{\text{Q3}}[m]$ relation has several cases.  If $x,y,z \ge n+1$ or $x,y,z <n+1$, the result is trivially true. Suppose then that exactly one label, say $x$, is $\ge n+1$.  Then it must be that $y=z<n+1$.  Hence, the labels $x',y',z'$ in the image of $d[m \to n]$ will satisfy $y'=z'<n+1=x'$.  Therefore, the image of any $\vec{\text{Q3}}[m]$ relation is a $\vec{\text{Q3}}[n]$ relation.  

Since $d[m \to n]$ is a surjection, it follows that the sequence is exact.  
\end{proof}

\begin{theorem} \label{xinfty} The following sequence is exact.  Hence, $\text{Hom}_{\mathbb{Z}}(\vec{\mathscr{X}}_t[n],\mathbb{Q})$ may be identified as a subgroup of $\text{Hom}_{\mathbb{Z}}(\vec{\mathscr{X}}_t[m],\mathbb{Q})$ for all $m \ge n$.
\[
\xymatrix{
0 \ar[r] & \text{Hom}_{\mathbb{Z}}(\vec{\mathscr{X}}_t[n],\mathbb{Q}) \ar[r]^{(d_t[m \to n])^*} &  \text{Hom}_{\mathbb{Z}}(\vec{\mathscr{X}}_t[m],\mathbb{Q})
}
\]
\end{theorem}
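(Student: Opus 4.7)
The plan is to deduce the claimed exactness directly from Lemma~\ref{dmnlemm} by invoking the fact that $\text{Hom}_{\mathbb{Z}}(-,\mathbb{Q})$ is a left-exact contravariant functor. Since a sequence $A \to B \to 0$ of abelian groups is exact if and only if the first arrow is surjective, applying $\text{Hom}_{\mathbb{Z}}(-,\mathbb{Q})$ sends it to a sequence $0 \to \text{Hom}_{\mathbb{Z}}(B,\mathbb{Q}) \to \text{Hom}_{\mathbb{Z}}(A,\mathbb{Q})$ which is exact at $\text{Hom}_{\mathbb{Z}}(B,\mathbb{Q})$; this gives the theorem once we apply it to the sequence furnished by Lemma~\ref{dmnlemm}.

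If one prefers an entirely self-contained argument, I would carry it out element-wise: suppose $v \in \text{Hom}_{\mathbb{Z}}(\vec{\mathscr{X}}_t[n],\mathbb{Q})$ lies in the kernel of $(d_t[m\to n])^*$, i.e.\ $v \circ d_t[m \to n] = 0$. Let $y \in \vec{\mathscr{X}}_t[n]$. By the surjectivity of $d_t[m\to n]$ established in Lemma~\ref{dmnlemm}, pick $x \in \vec{\mathscr{X}}_t[m]$ with $d_t[m\to n](x) = y$. Then $v(y) = v(d_t[m\to n](x)) = 0$, so $v = 0$, proving injectivity of $(d_t[m \to n])^*$. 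The identification of $\text{Hom}_{\mathbb{Z}}(\vec{\mathscr{X}}_t[n],\mathbb{Q})$ with a subgroup of $\text{Hom}_{\mathbb{Z}}(\vec{\mathscr{X}}_t[m],\mathbb{Q})$ then amounts to identifying each $v$ with its image $(d_t[m\to n])^*(v) = v \circ d_t[m\to n]$.

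There is no real obstacle here; the content of the theorem is packaged inside Lemma~\ref{dmnlemm}. The only thing to watch is that $\mathbb{Q}$ is indeed divisible (hence injective as a $\mathbb{Z}$-module), which guarantees that $\text{Hom}_{\mathbb{Z}}(-,\mathbb{Q})$ behaves as a nice left-exact functor; but for the specific statement of injectivity of $(d_t[m\to n])^*$, only surjectivity of $d_t[m\to n]$ is needed, and the argument goes through for any target abelian group. Accordingly, I would keep the write-up to a few lines: cite Lemma~\ref{dmnlemm} for surjectivity, and either invoke left-exactness of $\text{Hom}$ or spell out the one-line diagram chase above.
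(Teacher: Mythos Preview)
Your proposal is correct and matches the paper's approach exactly: the paper's proof is the single line ``This follows immediately from Lemma~\ref{dmnlemm},'' which is precisely the left-exactness/surjectivity argument you spell out.
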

\begin{proof} This follows immediately from Lemma \ref{dmnlemm}.
\end{proof}

\begin{theorem}[Commutativity of the Lattice] The following diagram commutes for all $t$, $m$, $n$, with $m \ge n$.
\[
\xymatrix{
\vec{\mathscr{X}}_t[m] \ar[r] \ar[d] & \vec{\mathscr{X}}_{t-1}[m] \ar[d] \\
\vec{\mathscr{X}}_t[n] \ar[r] & \vec{\mathscr{X}}_{t-1}[n] \\
}
\]
\end{theorem}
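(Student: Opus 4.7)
The plan is to verify commutativity at the level of the free abelian group $\mathbb{Z}[\vec{\mathscr{A}}[m]]$ and then invoke the two preceding lemmas to transfer the statement to the quotients $\vec{\mathscr{X}}_t[m]$. Both compositions in the square are built from two essentially trivial operations: (i) the horizontal map is induced by the inclusion of ideals $\langle \vec{A}_t[m]\rangle \subseteq \langle \vec{A}_{t-1}[m]\rangle$, so on the free group it is literally the identity and on the quotients it is the canonical projection $[D] \mapsto [D]$; (ii) the vertical map is induced by the relabeling operation $d[m\to n]$, which acts on a generator $D$ by replacing every arrow label $k>n+1$ with $n+1$ while leaving the arrow set and signs unchanged.

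First I would observe that on the free abelian group $\mathbb{Z}[\vec{\mathscr{A}}[m]]$, the two compositions agree on generators: for a Gauss diagram $D\in\vec{\mathscr{A}}[m]$ both routes produce $d[m\to n](D)$, because the horizontal map acts as the identity on free-group generators and the vertical map is $d[m\to n]$ regardless of which $t$-filtration we are working with. Hence at the level of free abelian groups the square is strictly commutative; no computation with relations is needed here.

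Next I would pass to the quotients. By Lemma \ref{horsurj}, the horizontal maps descend to the $\pi_t[m]$ and $\pi_t[n]$, and by Lemma \ref{dmnlemm}, the relabeling map $d[m\to n]$ descends to the vertical maps $d_t[m\to n]$ and $d_{t-1}[m\to n]$. Since the equality $\pi_{t-1}[n]\circ d_t[m\to n] = d_{t-1}[m\to n]\circ \pi_t[m]$ already holds on representatives in $\mathbb{Z}[\vec{\mathscr{A}}[m]]$, and both sides are well-defined maps of the quotients, the equality descends. Composing with the representative-chosen section is legitimate because each factor is individually well-defined by the cited lemmas; concretely, one picks a representative $D$ of a class in $\vec{\mathscr{X}}_t[m]$, runs it around both sides of the square, and observes that both outputs are the class of $d[m\to n](D)$ in $\vec{\mathscr{X}}_{t-1}[n]$.

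There is no serious obstacle: the entire substance of the argument has already been discharged in Lemmas \ref{horsurj} and \ref{dmnlemm}. The only point to double-check is that the relabeling operation $d[m\to n]$ commutes with passage to the $\vec{A}_t$-filtration, which is immediate since $d[m\to n]$ preserves the number of arrows in any diagram, so it sends $\vec{A}_t[m]$ to $\vec{A}_t[n]$ and therefore respects the additional relations added when moving from $\vec{\mathscr{X}}_t$ to $\vec{\mathscr{X}}_{t-1}$.
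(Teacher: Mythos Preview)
Your proof is correct and follows essentially the same approach as the paper, which simply states that commutativity is clear from the definitions of $\pi_t[m]$ and $d[m\to n]$ together with Lemmas~\ref{horsurj} and~\ref{dmnlemm}; you have merely spelled out the details. One small notational slip: the bottom horizontal map is $\pi_t[n]:\vec{\mathscr{X}}_t[n]\to\vec{\mathscr{X}}_{t-1}[n]$, not $\pi_{t-1}[n]$, so the commutativity should read $\pi_t[n]\circ d_t[m\to n]=d_{t-1}[m\to n]\circ\pi_t[m]$.
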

\begin{proof} This is clear from the definitions of the maps $\pi_t[m]$ and $d[m \to n]$ and the proofs of Lemmas \ref{horsurj} and \ref{dmnlemm}.
\end{proof}

Lastly we need to show that each row is an extension of the Polyak sequence of groups $\vec{\mathscr{P}}_t$.  Let $\vec{\text{V1}}[m]$, $\vec{\text{V2}}[m]$, $\vec{\text{V3}}[m]$ denote those $\vec{\text{Q1}}[m]$,$\vec{\text{Q2}}[m]$, $\vec{\text{Q3}}[m]$ relations, respectively, where all arrows are labelled $m+1$.  Let $\vec{E}[m]$ denote those dashed signed arrow diagrams where all arrows are labelled $m+1$.  Let $\vec{E}_t[m]$ denote those diagrams in $\vec{E}[m]$ which have more than $t$ arrows.  We define the quotient group $\vec{\mathscr{E}}_t[m]$ to be:
\[
\vec{\mathscr{E}}_t[m]=\frac{\mathbb{Z}[\vec{E}[m]]}{\left<\vec{E}_t[m],\vec{\text{V1}}[m], \vec{\text{V2}}[m], \vec{\text{V3}}[m] \right>}.
\]
\begin{theorem}[Property \ref{prop2}] \label{epolyakisom} For all $m$ and $t$, $\vec{\mathscr{E}}_t[m]$ is isomorphic to the Polyak group $\vec{\mathscr{P}}_t$. Moreover, the following sequence is exact for every $m$:
\[
0 \to \vec{\mathscr{E}}_t[m] \to \vec{\mathscr{X}}_t[m].
\]
Moreover, for every $v \in \text{Hom}_{\mathbb{Z}}(\vec{\mathscr{X}}_t[m], \mathbb{Q})$, the value of $v \circ I[m] \circ \Lambda[m]$ on any classical diagram is determined by the restriction of $v$ to the subgroup $\vec{\mathscr{E}}_t[m]$.
\end{theorem}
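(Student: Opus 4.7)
The plan is to prove the three assertions in order, leveraging the fact that every arrow in $\vec{E}[m]$ carries the single label $m+1$. For the isomorphism $\vec{\mathscr{E}}_t[m] \cong \vec{\mathscr{P}}_t$, the map $\mathbb{Z}[\vec{E}[m]] \to \mathbb{Z}[\vec{\mathscr{A}}]$ forgetting this constant label is a bijection of generators that sends $\vec{E}_t[m]$ onto $\vec{A}_t$. I would then verify that it identifies $\vec{\text{V1}}[m]$, $\vec{\text{V2}}[m]$, $\vec{\text{V3}}[m]$ with $\vec{\text{P1}}$, $\vec{\text{P2}}$, $\vec{\text{P3}}$, respectively; the first two are immediate, and for the third the specialization $i=j=k=m+1$ singled out in the definition of $\vec{\text{Q3}}[m]$ is precisely the eight-term $\vec{\text{P3}}$ relation after labels are dropped. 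Passing to quotients yields the isomorphism.

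For the exactness $0 \to \vec{\mathscr{E}}_t[m] \to \vec{\mathscr{X}}_t[m]$, the inclusion on generators descends to the quotients because each of the $\vec{\text{V}}$ relations is a particular case of the corresponding $\vec{\text{Q}}$ relation and $\vec{E}_t[m] \subseteq \vec{A}_t[m]$. To prove injectivity I would construct a retraction $r : \mathbb{Z}[\vec{\mathscr{A}}[m]] \to \mathbb{Z}[\vec{E}[m]]$ sending a labelled diagram to itself when every arrow carries label $m+1$ and to $0$ otherwise, then check that $r$ descends to a well-defined map $\vec{\mathscr{X}}_t[m] \to \vec{\mathscr{E}}_t[m]$. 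For $\vec{\text{Q1}}[m]$, the isolated arrow is labelled $m+1$ by definition, so $r$ yields $\vec{\text{V1}}[m]$. For $\vec{\text{Q2}}[m]$, the two affected arrows share a common label $j$: if $j=m+1$, $r$ yields $\vec{\text{V2}}[m]$; if $j<m+1$, all three summands vanish. For $\vec{\text{Q3}}[m]$, the constraints $i>j=k$, $j>i=k$, $k>i=j$, or $i=j=k=m+1$ force either all three labels to equal $m+1$ (yielding $\vec{\text{V3}}[m]$) or at least one to be strictly smaller than $m+1$ (so $r$ annihilates each of the eight summands). The filtration by $\vec{A}_t[m]$ is handled analogously. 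Since $r$ retracts the inclusion, injectivity follows.

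For the final assertion, I would invoke the standard fact that a parity of long flat virtual knots assigns label $0$ to every arrow of a classical Gauss diagram, so that $f(D)=D$ and inductively $f^i(D)=D$ for every $1 \le i \le m$. Hence the $f^m$-labelling of a classical $D$ assigns label $m+1$ to every arrow, and because $I[m]$ preserves the label of each arrow in every subdiagram, $I[m] \circ \Lambda[m](D) \in \mathbb{Z}[\vec{E}[m]]$. Its image in $\vec{\mathscr{X}}_t[m]$ therefore lies in the subgroup $\vec{\mathscr{E}}_t[m]$ established in the previous step, so the value of $v$ on it is determined by $v|_{\vec{\mathscr{E}}_t[m]}$. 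The main obstacle I anticipate is the case analysis proving well-definedness of $r$, specifically tracking every label configuration in the $\vec{\text{Q3}}[m]$ relation; the remaining steps amount to definition-unwinding together with the cited vanishing of flat parities on classical diagrams.
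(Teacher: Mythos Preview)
Your proposal is correct and, where the paper actually argues anything, follows the same line: the isomorphism with $\vec{\mathscr{P}}_t$ is the relabelling bijection, and the classical-diagram claim reduces to the observation that $f(D)=D$ for classical $D$, so that $\Lambda[m](D)$ and all its subdiagrams lie in $\mathbb{Z}[\vec{E}[m]]$.

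The one place you go beyond the paper is the injectivity of $\vec{\mathscr{E}}_t[m]\to\vec{\mathscr{X}}_t[m]$: the paper's proof says nothing about this beyond ``clear from the definitions,'' while you supply an explicit retraction $r$. Your case analysis there is sound; the only point worth making explicit in the $\vec{\text{Q3}}[m]$ case is that each of the eight summands carries at least two of the three involved arrows, so when the labels are not all $m{+}1$ (and hence two of $i,j,k$ are strictly below $m{+}1$) every summand contains an arrow with small label and is killed by $r$. This retraction is the natural way to justify what the paper leaves implicit, so your argument is a strict refinement rather than a different route.
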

\begin{proof} The first fact is clear from the definitions (see \cite{GPV}). For the second fact, note that for any classical diagram $D$ we have $f(D)=D$.  Therefore, $\Lambda[m](D)$ labels all the arrows of $D$ with $m+1$. The conclusion follows from the definition of $\vec{\mathscr{E}}_t[m]$.
\end{proof}

\section{Property \ref{prop3}: $f^m$-labelled Conway Polynomial}\label{fmcon} In this section, we define an $f^m$-labelled Conway polynomial for every $m$. It is proved that each polynomial has a representation in the lattice.  For this restriction, we consider only the Gaussian parity and Gauss diagrams on $\mathbb{R}$.  Under these conditions, the polynomials are all distinct.  However, all of the extensions satisfy the same skein relation.

\subsection{The Classical Conway Polynomial} The Conway polynomial for classical links is uniquely determined by the following skein relation:
\[
\nabla\left(\begin{array}{c}\scalebox{.05}{\psfig{figure=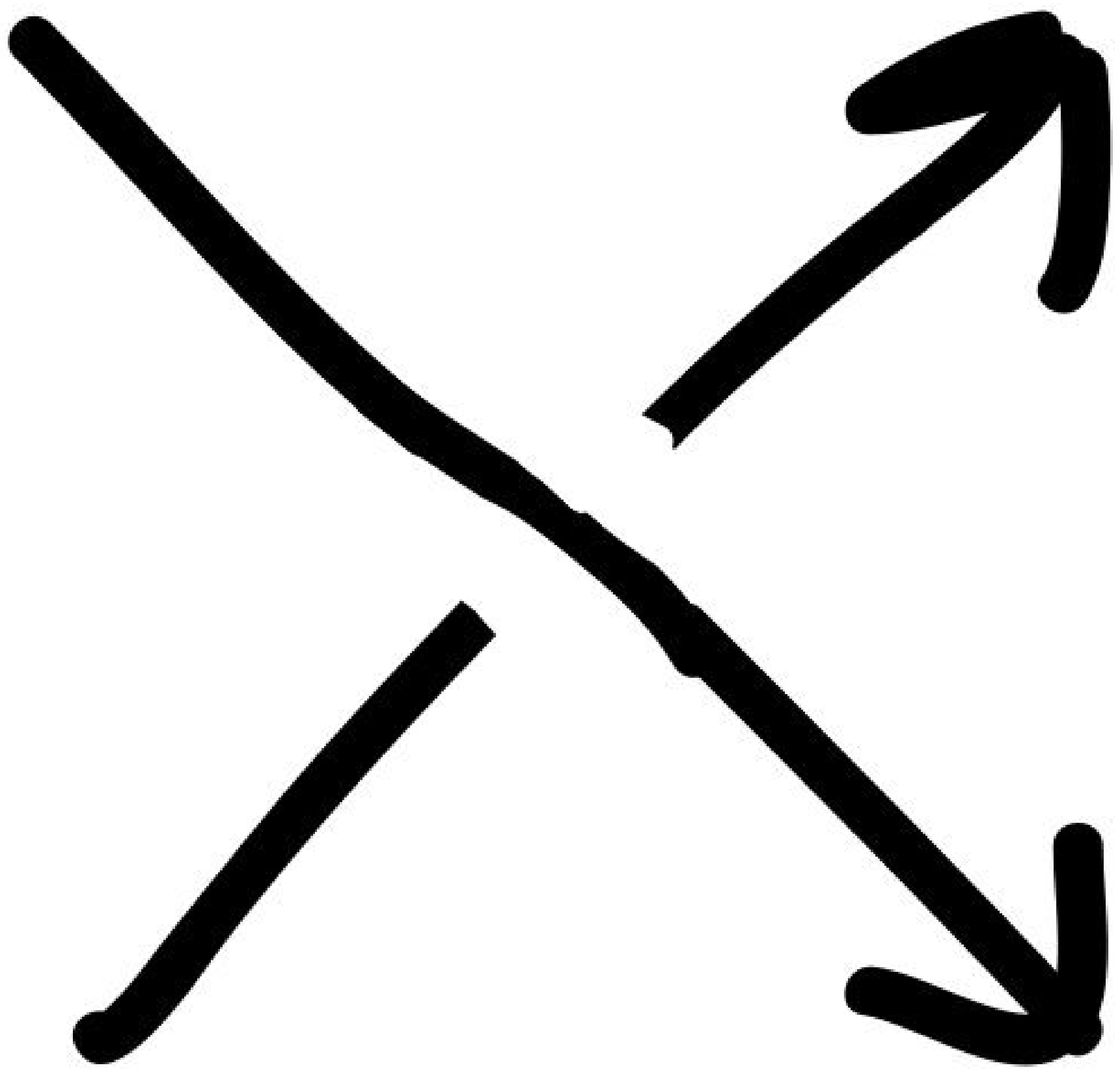}} \end{array}\right)-\nabla\left(\begin{array}{c}\scalebox{.05}{\psfig{figure=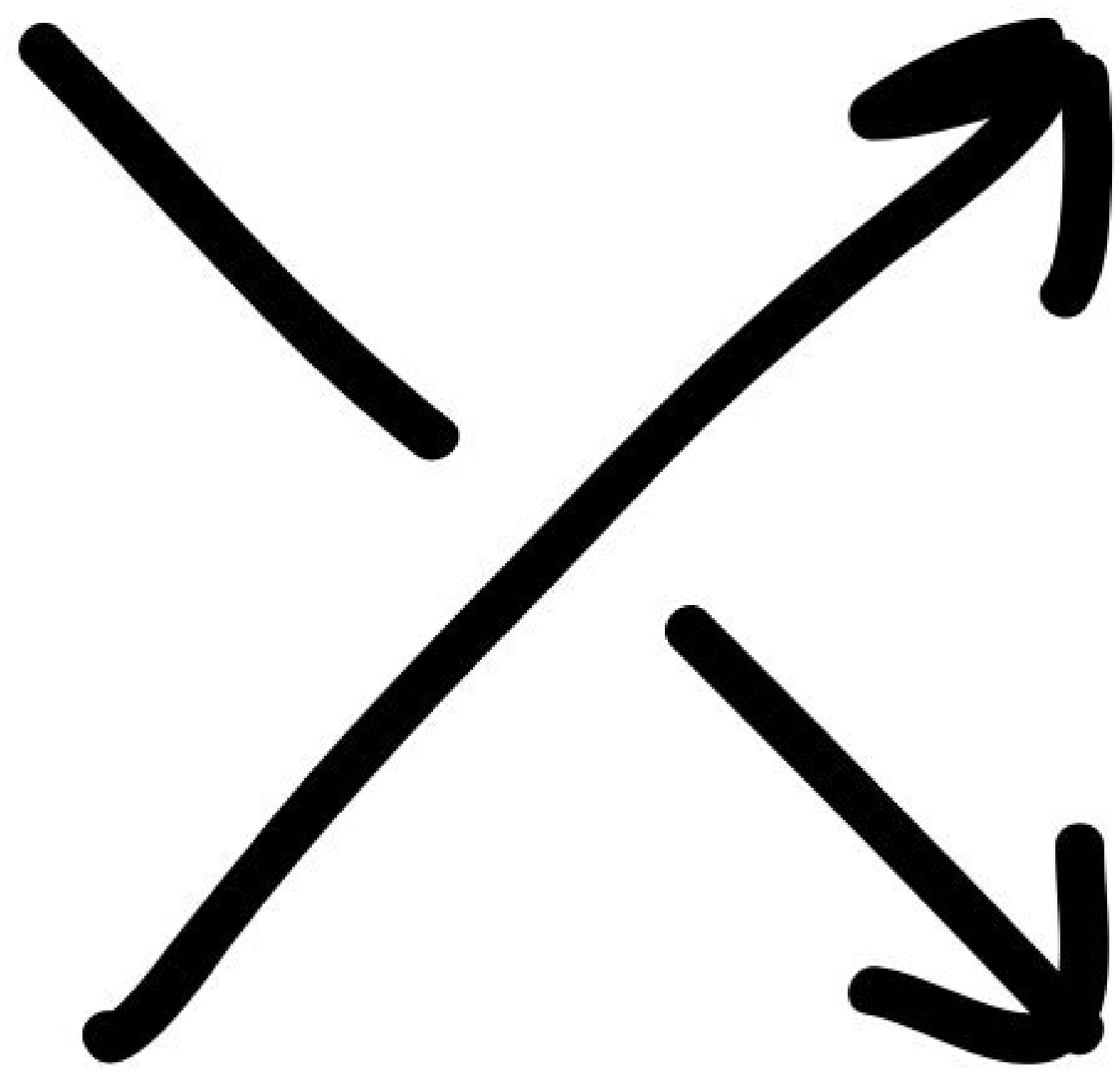}} \end{array}\right)=z \cdot \nabla\left(\begin{array}{c}\scalebox{.045}{\psfig{figure=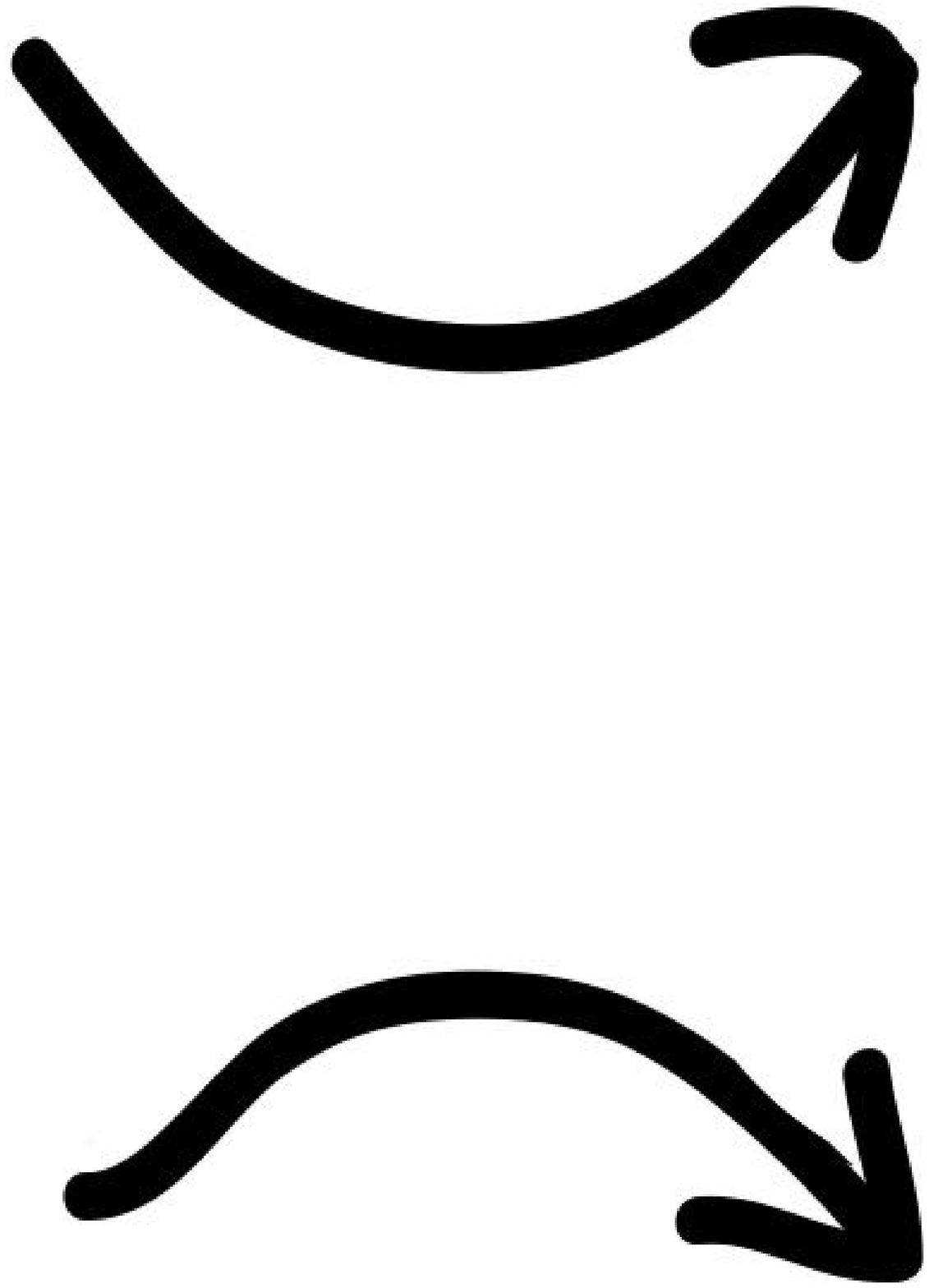}}\end{array}\right),
\]
and the condition that $\nabla(\bigcirc)=1$. The classical link diagrams $K_{\oplus}$, $K_{\ominus}$, $K_0$ form what is known as a \emph{Conway triple}.

There are many known extension of the Conway polynomial to virtual knots and virtual long knots.  Some of them satisfy a straightforward generalization of the skein relation \cite{CKR} while some do not \cite{saw}. Recently, Chmutov, Khoury, and Rossi showed that there exist two natural extensions of the Conway polynomial $\nabla _{\text{asc}}$ and $\nabla_{\text{desc}}$ to virtual long knots (see also, \cite{ChP}) which satisfy a certain skein relation. In addition, they found Gauss diagram formulae which compute the coefficients of the Conway polynomial up to any order. Related work for other knot polynomials has been done by Chmutov and Polyak \cite{ChP} and Brandenbursky and Polyak \cite{BP}. 

\begin{figure}[h]
\[
\begin{array}{c} \scalebox{.05}{\psfig{figure=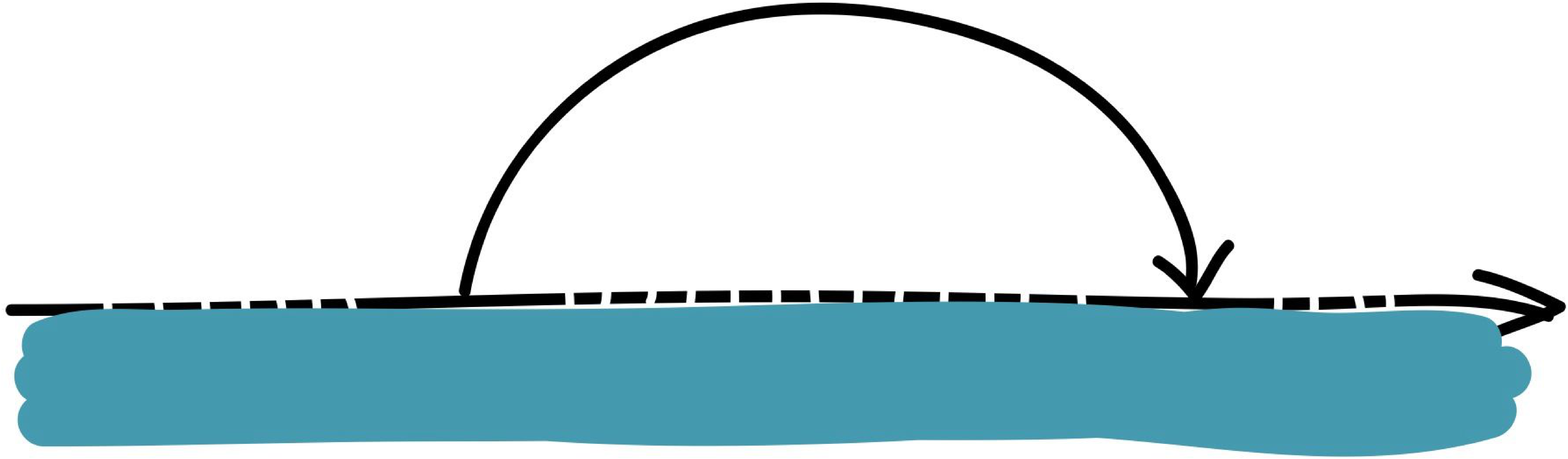}} \end{array} \to \begin{array}{c} \scalebox{.05}{\psfig{figure=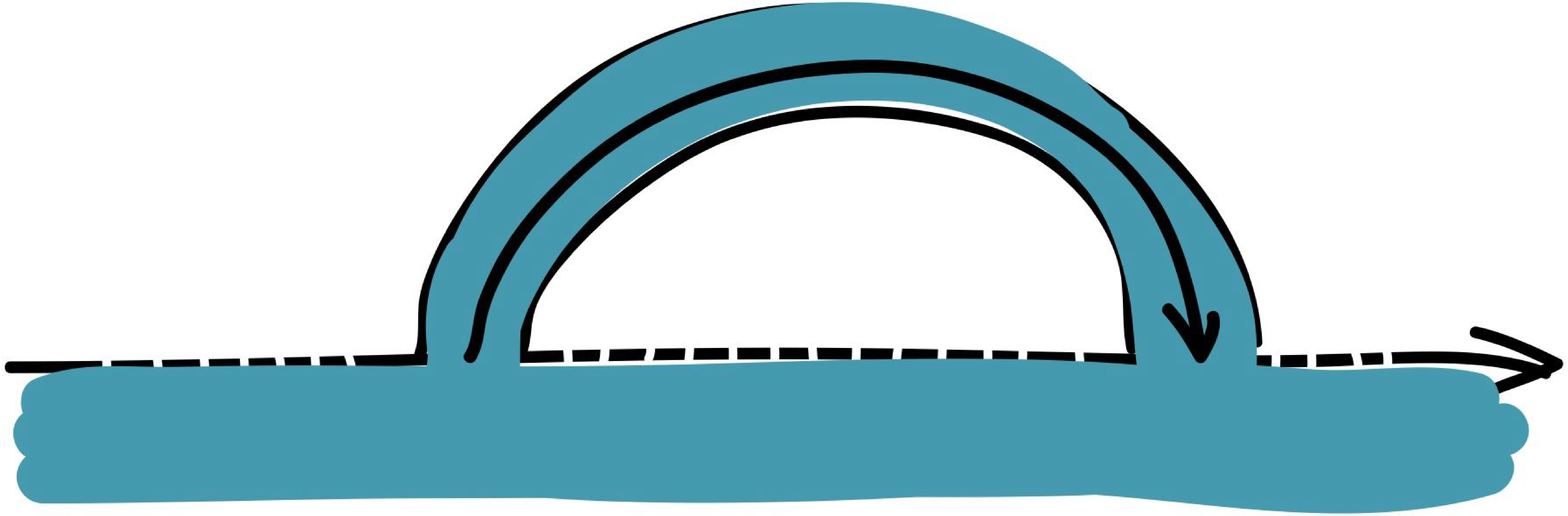}} \end{array}
\]
\caption{The oriented smoothing at an arrow.} \label{oreinsmooth}
\end{figure}

The invariant $\nabla_{\text{asc}}$ may be defined as follows\cite{CKR}. Let $D$ be a Gauss diagram on $\mathbb{R}$ and $x$ an arrow of $D$. We consider $\mathbb{R}$ to be identified with the $x$-axis in $\mathbb{R}^2$ where it bounds the lower half-plane.  The \emph{oriented smoothing} of $D$ at $x$ is obtained by gluing an untwisted band $[0,1]\times[0,1]$ to the intervals around the endpoints of $x$.  This is illustrated in Figure \ref{oreinsmooth}. If we take the oriented smoothing at every crossing of $D$, we call the resulting orientable surface the \emph{Seifert smoothing} of $D$. The diagram is said to have \emph{one component} if the number of boundary components of the Seifert smoothing is one. While traversing a one component diagram from $-\infty \to \infty$, each arrow is passed exactly twice.  If the first pass of each arrow is in the same direction of the arrow, the one component diagram $D$ is said to be \emph{ascending}.   

Let $C_{2n}$ denote the sum of all ascending Gauss diagrams on $\mathbb{R}$ having exactly $2n$ arrows, where each summand is weighted according to the product of the signs of its arrows.  It follows from \cite{CKR} and \cite{ChP} that if $c_{2n}(K)$ is the coefficient of $z^{2n}$ in $\nabla(K)$, then:
\[
c_{2n}(K)=\left<C_{2n},I(D_K)\right>, \text{ where } I(D)=\sum_{D' \subseteq D} D',  
\]
$\left<\cdot,\cdot\right>$ is the pairing $\left<D,E \right>=\delta_{DE}$, and $D_K$ is a Gauss diagram of $K$. In addition, there is an extension $\nabla_{\text{asc}}$ of the Conway to long virtual knots defined as follows:
\[
\nabla_{\text{asc}}(K)=\sum_{n=0}^{\infty} \left<C_{2n},  I(D_K)\right> z^{2n}.
\]
\textbf{Example:} \cite{CKR} There is only one ascending Gauss diagram on $\mathbb{R}$ of order $2$.  Then $c_2$ is given by:
\[
c_2(K)=\left<\begin{array}{c}\scalebox{.5}{\psfig{figure=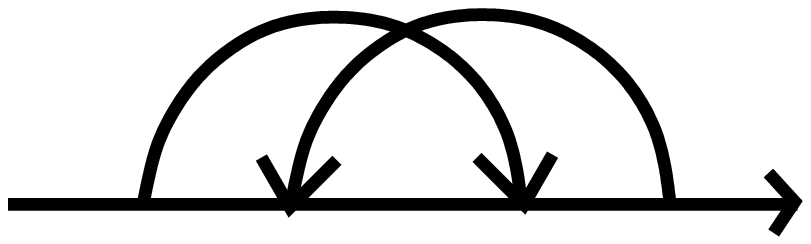}} \end{array},I(D_K)\right>.
\]
\subsection{Definition of $f^m$-Conway Polynomial} The $f^m$-labelled Conway polynomial is constructed by decomposing the combinatorial formula into its ``even'' part and ``odd part''.  The ``even part'' is killed and replaced with its ``$\infty$ part''.  

We note that parity has been used to improve a number of virtual knot polynomials.  For example, this was done in \cite{Af}.  In addition, there is the \emph{parity bracket polynomial} of V.O. Manturov. The technique presented in the present paper is somewhat different.

A combinatorial formula is a linear combination of Gauss diagrams, $F=\sum_{i=1}^N a_i F_i$, where $a_i \in \mathbb{Z}$ for $1 \le i \le N$. A combinatorial formula generates a virtual knot invariant by $\left<F,I(\cdot)\right>$, where $\left<\cdot,\cdot\right>$ and $I$ were defined in the previous section.  Let $F_i^e[m]$ be the dashed arrow diagram $F_i$ with all its arrows labelled $m+1$ (for $m=\infty$, all arrows are labelled $\infty$). We define the $f^m$-even part of $F$ to be:
\[
F^e[m]=\sum_{i=1}^N a_i F_i^e[m].
\]
The $f^m$-odd part of $F$ is defined as follows.  Let $O(D)$ be the set of diagrams in $\vec{\mathscr{A}}[m]$ whose arrows and signs are identical with $D$ and such that not all of the arrow labels are $m+1$. Then the $f^m$-odd part of $F$ is defined to be:
\[
F^o[m]=\sum_{i=1}^N \sum_{F_i^o[m] \in O(F_i)} a_i F_i^o[m].
\]
For $m=\infty$, we set $F^o[m]=0$. If $n=0$, we set $F^o[m]=0$ as well.

We say that a combinatorial formula $F=\sum F_i$ is homogeneous of order $n$ if each of the $F_i$ has exactly $n$ arrows.  For example, the formulae for the Conway coefficients, $C_{2n}$ are homogeneous of order $2n$.
 
\begin{theorem} \label{evenodddecomp} Let $m \in \mathbb{N} \cup \{\infty\}$. If $F$ is a homogeneous GPV combinatorial formula of order $n$, then $\left<F^e[m],\cdot \right> \in \text{Hom}_{\mathbb{Z}}(\vec{\mathscr{X}}_n[m],\mathbb{Q})$,  $\left<F^o[m],\cdot \right> \in \text{Hom}_{\mathbb{Z}}(\vec{\mathscr{X}}_n[m],\mathbb{Q})$, and for every long virtual knot $K$ we have:
\[
\left<F,I(D_K) \right>=\left<F^e[m],I[m]\circ \Lambda[m](D_K) \right>+\left<F^o[m],I[m]\circ \Lambda[m](D_K) \right>.
\]
\end{theorem}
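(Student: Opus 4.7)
The plan is to verify the three assertions in turn: first that $\langle F^e[m], \cdot \rangle$ and $\langle F^o[m], \cdot \rangle$ vanish on each defining relation of $\vec{\mathscr{X}}_n[m]$, and then that the decomposition identity holds by a subdiagram-by-subdiagram bookkeeping. Vanishing on $\vec{A}_n[m]$ is immediate from homogeneity, since every summand of both $F^e[m]$ and $F^o[m]$ carries exactly $n$ arrows and so pairs trivially with any diagram of more than $n$ arrows.

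For each $\vec{Q_j}[m]$ relation ($j = 1, 2, 3$), I would split the admissible label patterns on the affected arrows into two cases: all such labels equal $m+1$, or at least one is $\le m$. The pairing with $F^e[m]$, whose summands carry only the label $m+1$, is nonzero only on the all-$m+1$ branch; there, forgetting labels identifies $\vec{Q_j}[m]$ with the unlabeled Polyak relation $P_j$, and the pairing reduces to $\langle F, P_j \rangle$, which vanishes because $F$ is a GPV formula (i.e.\ represents a class in $\vec{\mathscr{P}}_n$). Dually, the pairing with $F^o[m]$ is zero on the all-$m+1$ branch and nontrivial on the remaining sub-cases; in each such sub-case the three (or eight) terms of the $\vec{Q_j}[m]$ relation share a single fixed label pattern on the affected arrows, so the sum of pairings again collapses to $\langle F, P_j \rangle = 0$.

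For the decomposition identity, subdiagrams of $\Lambda[m](D_K)$ are in canonical bijection with subdiagrams of $D_K$ via inheritance of the $f^m$-labeling. Each labeled subdiagram then pairs nontrivially with exactly one of $F^e[m]$ (when all its inherited labels equal $m+1$) or $F^o[m]$ (when at least one label is $\le m$), contributing the coefficient $a_i$ precisely when the underlying arrow pattern equals $F_i$. Summing over all subdiagrams recovers $\langle F, I(D_K) \rangle$, matching the left-hand side of the identity.

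The main obstacle will be the case analysis for $\vec{Q3}[m]$, whose admissible label triples split into several sub-cases (such as $i > j = k$ with either $i = m+1$ or $i \le m$). In each sub-case I would need to verify carefully that the eight terms share the same label data on the three affected arrows, so that the pairing with $F^o[m]$ collapses cleanly into the unlabeled $P3$ relation paired against $F$; the key input is the third property of the $f^m$-labelling, which forces the labels of the affected arrows to agree between the LHS and RHS of every $\vec{Q3}[m]$ relation.
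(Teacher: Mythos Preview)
Your proposal is correct and constitutes exactly the direct verification that the paper defers to its reference for the $m=1$ case; the paper's own proof is a one-line citation, so your sketch already supplies more detail than appears there. The only refinement worth making explicit is that in the $\vec{\text{Q3}}[m]$ case with labels not all equal to $m+1$, each of the eight terms retains at least two of the three affected arrows, and the label constraints force at least two of $i,j,k$ to be $\le m$, so every term individually carries an arrow labelled $\le m$ and the collapse to $\langle F, P_3\rangle = 0$ goes through as you describe.
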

\begin{proof} This follows exactly as in the case $m=1$ \cite{CM}.
\end{proof}

We define invariants $c_{2n}[m]:\mathscr{K} \to \mathbb{Z}$ as follows:
\[
c_{2n}[m](K)=\left<C^e_{2n}[\infty],I[\infty]\circ \Lambda[\infty](D_K) \right>+\left<C^o_{2n}[m],I[m]\circ \Lambda[m](D_K) \right>.
\]
Then we define the $f^m$-Conway polynomial to be:
\[
\nabla[m](K)=\sum_{n=0}^{\infty} c_{2n}[m](K) z^{2n}.
\]

\begin{theorem}[Property 3] For all $m$, the function $\nabla[m]$ is an invariant of long virtual knots.  The $c_{2n}[m]$ coefficient of $z^{2n}$ in $\nabla[m]$ is a finite-type invariant of degree $\le 2n$. Moreover, $\nabla[m]$ is represented in the lattice given in Equation $\ref{commlatt}$. If $K$ is a classical knot, then $\nabla[m](K)=\nabla(K)$.
\end{theorem}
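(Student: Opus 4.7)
All four conclusions are designed to fall out of the machinery already in place, chiefly Theorem \ref{evenodddecomp}, Lemma \ref{kauffft} (Property \ref{prop1}), and the fact noted in the proof of Theorem \ref{epolyakisom} that classical Gauss diagrams are fixed by $f$. The plan is to reduce each assertion to an application of one of these results.

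First, since $C_{2n}$ is by definition a weighted sum of Gauss diagrams each with exactly $2n$ arrows, it is a homogeneous GPV combinatorial formula of order $2n$. I would apply Theorem \ref{evenodddecomp} to $F = C_{2n}$, once with parameter $\infty$ and once with the chosen $m$, to obtain $\langle C^e_{2n}[\infty],\cdot\rangle \in \text{Hom}_{\mathbb{Z}}(\vec{\mathscr{X}}_{2n}[\infty],\mathbb{Q})$ and $\langle C^o_{2n}[m],\cdot\rangle \in \text{Hom}_{\mathbb{Z}}(\vec{\mathscr{X}}_{2n}[m],\mathbb{Q})$. Well-definedness of both homomorphisms gives Reidemeister invariance of each summand in the definition of $c_{2n}[m]$, hence invariance of $c_{2n}[m]$ and of $\nabla[m]$. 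Applying Lemma \ref{kauffft} to each summand yields that $c_{2n}[m]$ is a Kauffman finite-type invariant of degree at most $2n$. The lattice representation then comes for free from the very same pair of memberships: the even piece sits in the dual of the node $\vec{\mathscr{X}}_{2n}[\infty]$ at the top of column $2n$, and the odd piece sits in the dual of $\vec{\mathscr{X}}_{2n}[m]$ in the same column, both nodes of the commutative lattice of Equation \ref{commlatt}.

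For the classical restriction, fix a classical knot $K$ with diagram $D_K$. As recorded in the proof of Theorem \ref{epolyakisom}, $f(D_K) = D_K$; iterating, $f^i(D_K) = D_K$ for every $i \le m$, so by the rule defining the $f^m$-labelling each arrow of $\Lambda[m](D_K)$ receives the label $m+1$, i.e.\ $\Lambda[m](D_K) \in \mathbb{Z}[\vec{E}[m]]$. Because $I[m]$ preserves the label and sign of every arrow when passing to subdiagrams, every summand of $I[m]\circ\Lambda[m](D_K)$ also has all labels equal to $m+1$. Since $C^o_{2n}[m]$ is supported on diagrams possessing at least one label strictly less than $m+1$, the pairing $\langle C^o_{2n}[m], I[m]\circ\Lambda[m](D_K)\rangle$ vanishes. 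The remaining pairing $\langle C^e_{2n}[\infty], I[\infty]\circ\Lambda[\infty](D_K)\rangle$ is identified, through the ``strip the $\infty$ labels'' bijection on both sides, with the classical Goussarov--Polyak--Viro pairing $\langle C_{2n}, I(D_K)\rangle = c_{2n}(K)$. Hence $c_{2n}[m](K) = c_{2n}(K)$ for every $n$, and summing gives $\nabla[m](K) = \nabla(K)$.

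The only point that requires any care is the classical reduction: one must be sure that for a classical $D_K$ the assigned labels really are $m+1$ on \emph{every} arrow (including in the $m = \infty$ case) and that this property is inherited by each subdiagram appearing under $I[m]$ rather than being recomputed from the parity of the subdiagram itself. Both properties are built into the definitions of $\Lambda[m]$ and $I[m]$ in Section \ref{fmlabel}, so this is less an obstacle than a point demanding a careful reading; everything else is essentially an invocation of the earlier results together with bookkeeping.
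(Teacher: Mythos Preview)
Your proposal is correct and follows essentially the same approach as the paper's own proof: invariance and lattice representation via Theorem \ref{evenodddecomp}, finite-type degree via Lemma \ref{kauffft}, and the classical restriction via the observation that $f$ fixes classical diagrams so all $f^m$-labels are $m+1$ and the odd part vanishes. You have simply spelled out in more detail what the paper compresses into three sentences.
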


\begin{proof} The fact that $\nabla[m]$ is an invariant which is represented in the lattice follows from Theorem \ref{evenodddecomp}. The fact that the coefficients are of Kauffman finite-type follows from Theorem \ref{kauffft}. The final claim follows from the fact that the $f^m$-label of any classical knot is $m+1$ for every $m$. Hence, the odd part $C^o_{2n}[m]$ vanishes on classical knots for every $m$ and $n$.
\end{proof}

\subsubsection{The $\nabla[m]$ are distinct} \label{earring} We prove that for all $k$, the $c_{2k}[m]$ are distinct. To do this, we define a knot diagram $K_{m,k}$, and count the number ascending one component subdiagrams of its Gauss diagram $D_{K_{m,k}}$.

Let $D_k$ denote the chord diagram on $\mathbb{R}$ whose intersection graph is the complete graph on $2k$ vertices. To each chord of $D_k$, we add an ``earring'' of width $m-1$ as follows. An \emph{earring} of width $w$ is a chord diagram whose Gauss code is:
\[
121324354\cdots (w-1)(w-2) w (w-1) w.
\] 
In particular, the intersection graph of an earring of length $w$ is a path of length $w$.  Number the chords of $D_k$ by the order of their leftmost endpoints.  We add an earring of width $m-1$ to the left of the chord $1$ in $D_k$ so that the first chord of the earring becomes the leftmost chord and the last chord of the earring is linked with chord numbered $1$ in $D_k$.  The intersection graph of this chord diagram is a coalescence of the complete graph on $k$ vertices with a path of length $m$ at an endpoint of the path.  
\begin{figure}[h]
\[
\begin{array}{ccc}
\begin{array}{c}\scalebox{.35}{\psfig{figure=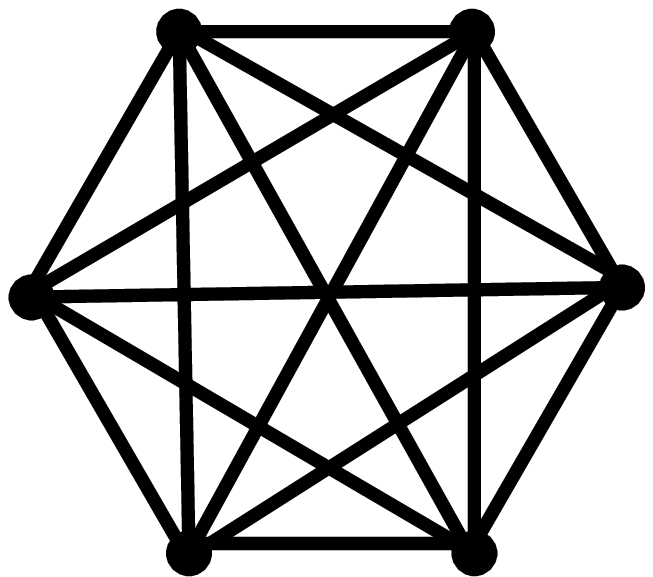}}\end{array} & \rightarrow & \begin{array}{c}\scalebox{.35}{\psfig{figure=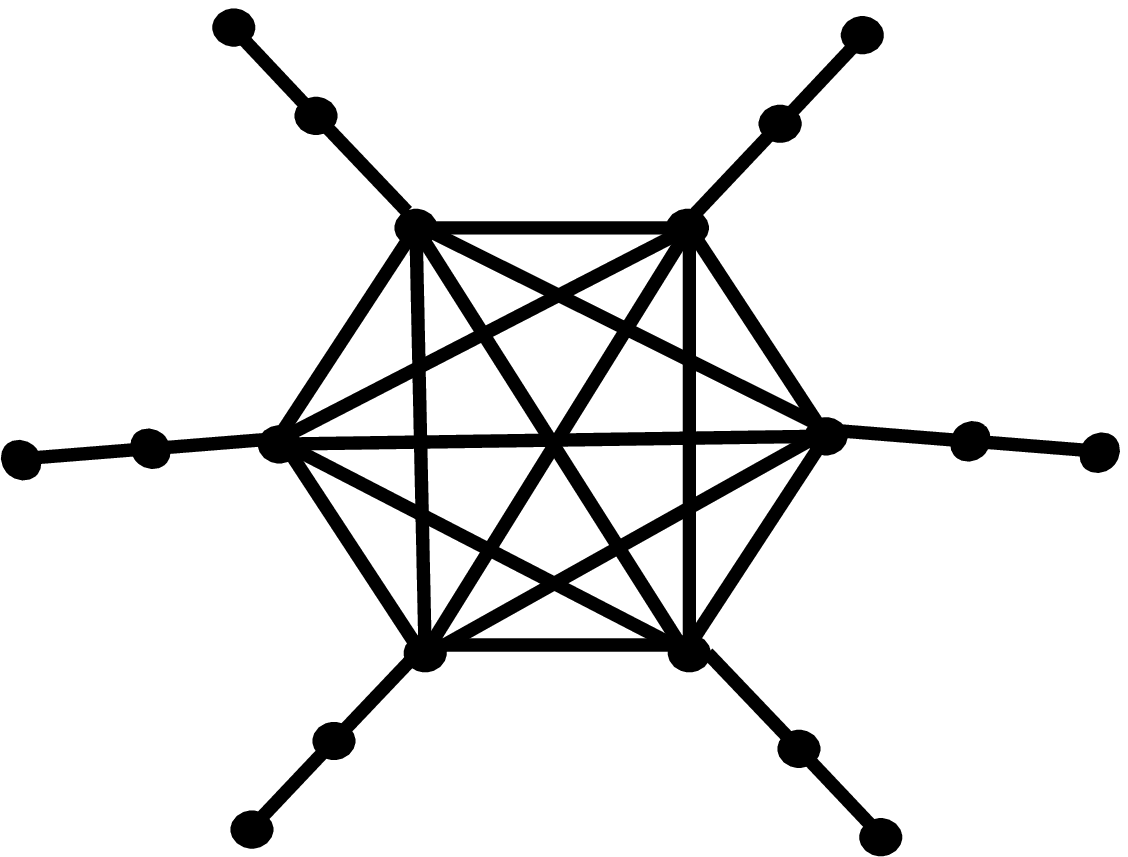}}\end{array} \\
\end{array}
\]
\caption{Adding earrings to the vertices of a complete graph.}\label{earringfig} 
\end{figure}
Similarly, we add an earring of width $m-1$ on the immediate left of the left endpoint of each odd numbered chord of $D_k$ and extending to the immediate right of the right hand endpoint of each even numbered chord of $D_k$. The resulting Gauss diagram is denoted $D_{m,k}$. Note that vertices of $D_k$ are $f^{\infty}$-labelled $m$.  Each earring has the labels $1,2,\ldots,m-1$, with the arrow labelled $m-1$ being linked with a vertex of $D_k$.
\newline
\newline
\textbf{Example:} Consider the case of $c_{2}[m]$. Then $D_{m,2}$ is given below:
\[
D_{m,2}=\begin{array}{c}\scalebox{.35}{\psfig{figure=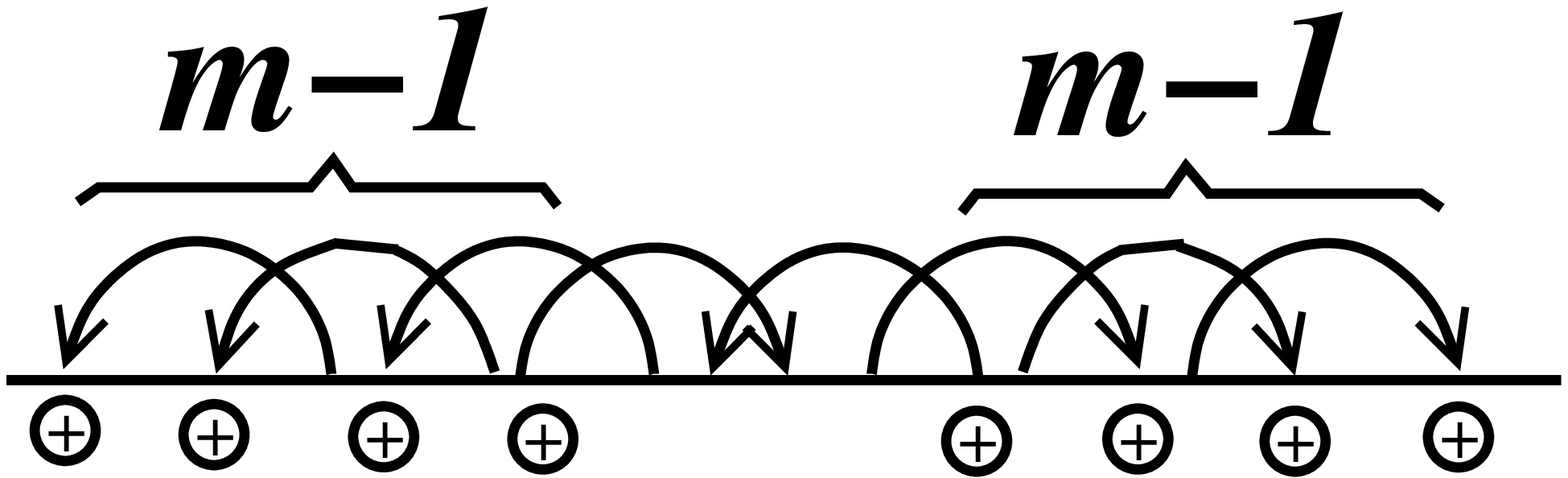}}\end{array}.
\]

Now we create a signed and directed Gauss diagram $\vec{D}_{m,k}$ from $D_{m,k}$.  First, we choose the $\oplus$ sign for each of the chords in $D_{m,k}$.  Secondly, we orient all of the odd numbered chords in $D_k$ from left to right and all of the even numbered chords of $D_k$ from right to left. We may orient the chords of the earrings arbitrarily.

\begin{lemma} \label{onecomplemm} For all $k$, the subdiagram of $\vec{D}_{m,k}$ corresponding to $D_k$ is a one component ascending diagram.
\end{lemma}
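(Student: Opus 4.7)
The plan is to label the $4k$ endpoints of $D_k$ by their left-to-right positions on $\mathbb{R}$, so that positions $1, \ldots, 2k$ are $L_1, \ldots, L_{2k}$ and positions $2k+1, \ldots, 4k$ are $R_1, \ldots, R_{2k}$; chord $c$ then connects position $c$ to position $c+2k$, and its partner map is $p^* = p+2k$ for $p \le 2k$ and $p^* = p-2k$ for $p > 2k$. Because the earrings only insert new endpoints between existing $D_k$-endpoints without reordering them, the subdiagram of $\vec{D}_{m,k}$ supported on the chords of $D_k$ is precisely $D_k$ with the given signs and orientations: odd chord $c$ oriented from $c$ to $c+2k$, even chord $c$ oriented from $c+2k$ to $c$.

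At each endpoint $p$ of a chord, the oriented smoothing glues the arc approaching $p$ from the left to the arc departing $p^*$ to the right. On the $4k+1$ open intervals $I_0 = (-\infty,1), I_1 = (1,2), \ldots, I_{4k} = (4k,\infty)$ separating consecutive endpoints, this induces the partial permutation $\sigma(I_j) = I_{(j+1)^*}$, and the boundary component of the Seifert surface containing $-\infty$ is the $\sigma$-orbit of $I_0$. A direct calculation gives $\sigma(I_j) = I_{j+2k+1}$ for $j < 2k$ and $\sigma(I_j) = I_{j-2k+1}$ for $j \ge 2k$, from which $\sigma^2(I_j) = I_{j+2}$ wherever defined. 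Hence the $\sigma^2$-orbit of $I_0$ is $I_0, I_2, I_4, \ldots, I_{4k}$, and since $\sigma$ reverses the parity of the subscript, the $\sigma$-orbit of $I_0$ interleaves these even-indexed intervals with all $2k$ odd-indexed intervals and terminates at $I_{4k}$. All $4k+1$ intervals are exhausted by this single orbit, so the Seifert surface has exactly one boundary component.

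For the ascending condition, I trace the first $2k$ steps of the $\sigma$-orbit, namely $I_0 \to I_{2k+1} \to I_2 \to I_{2k+3} \to \cdots \to I_{2k-2} \to I_{4k-1}$, and identify each step with a chord traversal. The $(2i-1)$-th step for $1 \le i \le k$ is the jump $I_{2i-2} \to I_{2k+2i-1}$, which traverses the Seifert band from position $2i-1$ to position $2k+2i-1$, i.e.\ left-to-right across the chord; this is the arrow direction of the odd chord $2i-1$. The $(2i)$-th step is $I_{2k+2i-1} \to I_{2i}$, traversing the band from position $2k+2i$ to position $2i$, i.e.\ right-to-left across the chord, matching the arrow direction of the even chord $2i$. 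Each of the $2k$ chords of $D_k$ appears exactly once among these initial $2k$ reroutes, so each chord's first pass is in its arrow direction, which is the ascending condition. The main bookkeeping obstacle is to verify carefully that the direction of the jump along the Seifert band at each step, namely from the arriving position $j+1$ to the departing position $(j+1)^*$, is the quantity compared with the arrow direction in the definition of ascending; once $\sigma$ is explicit, this reduces to elementary arithmetic.
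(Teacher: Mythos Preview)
Your proof is correct, and it takes a genuinely different route from the paper's. For the one-component claim, the paper invokes Zulli's theorem: the number of boundary components of the Seifert smoothing equals one plus the $\mathbb{Z}_2$-nullity of the adjacency matrix of the intersection graph, and the adjacency matrix of $K_{2k}$ (ones off the diagonal, zeros on the diagonal, size $2k\times 2k$) has nullity zero. You instead trace the boundary directly via the permutation $\sigma$ on the $4k+1$ intervals, show $\sigma^2(I_j)=I_{j+2}$, and conclude that the $\sigma$-orbit of $I_0$ exhausts all intervals. This is more elementary and entirely self-contained, at the cost of a little explicit arithmetic; the paper's route is shorter once Zulli's theorem is in hand but imports an external result.

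For the ascending claim, the paper's argument is terse---it simply notes that odd chords point right and even chords point left---leaving the reader to check that the boundary traversal first meets each chord in its arrow direction. Your explicit identification of step $c$ (for $1\le c\le 2k$) with the first traversal of chord $c$, together with the parity check on the direction, makes this verification concrete and is arguably more complete than the paper's sketch. The two approaches are consistent; yours just unpacks what the paper leaves implicit.
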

\begin{proof} By Zulli's Theorem \cite{MR1341816}, the number of boundary components is one more than the nullity of the $\mathbb{Z}_2$ adjacency matrix of $K_{2k}$ (i.e. the complete graph on two vertices).  The adjacency matrix is the $2k \times 2k$ matrix: 
\[
\left[
\begin{array}{ccccc} 
0 & 1 & 1 & \cdots & 1 \\
1 & 0 & 1 & \cdots & 1 \\
1 & 1 & 0 & \cdots & 1  \\
\vdots & \ddots & \ddots & \cdots & \vdots \\
1 & 1 & 1 & \cdots & 0 \\
\end{array}
 \right].
\]
It can be shown by induction that this matrix has nullity 0 (see also \cite{C5}). Hence, $D_k$ is of one component.  It is ascending because in the left-to-right ordering of the chords, the odd chords with respect to this ordering point right and the even chords with respect to this ordering point left.  
\end{proof}
  
\begin{lemma} For all $n$, $1 \le n <m$, we have:
\[
c_{2k}[m](\vec{D}_{m,k})-c_{2k}[n](\vec{D}_{m,k})\ge 1.
\]
\end{lemma}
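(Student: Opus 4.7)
The plan is to compute the difference $c_{2k}[m](\vec{D}_{m,k}) - c_{2k}[n](\vec{D}_{m,k})$ directly from the definitions and track the $f^{m'}$-labels. Since the ``even part'' $\langle C^e_{2k}[\infty], I[\infty] \circ \Lambda[\infty](\vec{D}_{m,k}) \rangle$ in the definition of $c_{2k}[m'](K)$ does not depend on $m'$, it cancels in the difference, reducing the problem to comparing the two odd pairings $\langle C^o_{2k}[m'], I[m'] \circ \Lambda[m'](\vec{D}_{m,k}) \rangle$ for $m' = m$ and $m' = n$. Because every arrow of $\vec{D}_{m,k}$ carries sign $+$, each such pairing is (by the $\delta$-nature of $\langle\cdot,\cdot\rangle$) simply an unsigned count of one-component ascending subdiagrams $F \subset \vec{D}_{m,k}$ of order $2k$ whose inherited $f^{m'}$-labels are not identically $m'+1$.

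Next I would compute those inherited labels from the structure of $\vec{D}_{m,k}$. The construction forces the $f^\infty$-label of each chord $c_i$ of $D_k$ to be $m$ and the $f^\infty$-label of each earring arrow $e_j^{(i)}$ to be $j$ (for $1 \le j \le m-1$); this can be verified by iterating $f$ against the Gaussian parity and checking that at step $j$ the unique odd-degree earring arrow in the $i$-th earring is $e_j^{(i)}$, while at step $m$ every $c_i$ becomes odd as a vertex of $K_{2k}$ and is deleted. Applying $d[\infty \to m']$ then gives: for $m' = m$ every arrow of $\vec{D}_{m,k}$ has label at most $m$, so the condition ``not all labels equal $m+1$'' is automatic for any non-empty subdiagram; for $m' = n < m$, every $c_i$ and every $e_j^{(i)}$ with $j > n$ acquires label $n+1$, while only the $e_j^{(i)}$ with $j \le n$ retain a label strictly less than $n+1$. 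Consequently the difference reduces to
\[
c_{2k}[m](\vec{D}_{m,k}) - c_{2k}[n](\vec{D}_{m,k}) = \bigl|\{F \in A : F \text{ contains no arrow } e_j^{(i)} \text{ with } j \le n\}\bigr|,
\]
where $A$ is the set of one-component ascending subdiagrams of $\vec{D}_{m,k}$ of order $2k$.

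Finally, the subdiagram $F = D_k \subset \vec{D}_{m,k}$ consisting solely of the $2k$ chords of $D_k$ has order $2k$, contains no earring arrows at all, and is one-component ascending by Lemma \ref{onecomplemm}. Thus $F = D_k$ lies in the set on the right-hand side, and the difference is $\ge 1$. The main obstacle is the careful bookkeeping of the $f^{m'}$-labels on $\vec{D}_{m,k}$ across the two cases $m' = m$ and $m' = n$, and in particular verifying the earring-by-earring deletion pattern under iterated application of $f$; once this is in hand, the $+1$ contribution of $F = D_k$ is an immediate consequence of Lemma \ref{onecomplemm}.
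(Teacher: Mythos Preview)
Your proposal is correct and follows essentially the same route as the paper: cancel (or observe the vanishing of) the even part, compare the odd counts via the inherited $f^{m'}$-labels on $\vec{D}_{m,k}$, and exhibit $D_k$ as the witnessing subdiagram using Lemma~\ref{onecomplemm}. The only cosmetic difference is that the paper phrases the comparison as an inclusion (every subdiagram counted by $C^o_{2k}[n]$ is also counted by $C^o_{2k}[m]$) and notes directly that $C^e_{2k}[\infty]$ counts nothing here, whereas you express the difference explicitly as the cardinality of those $F\in A$ avoiding all $e_j^{(i)}$ with $j\le n$; both arrive at the same $+1$ contribution from $D_k$.
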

\begin{proof} First note that the $f^n$-labels of $\vec{D}_{m,k}$ may be obtained from the $f^m$-labels by erasing all of the $f^m$-labels greater than $n$ and setting them equal to $n+1$. Also note that no subdiagram of $\vec{D}_{m,k}$ is counted by $C^e_{2k}[\infty]$. It therefore suffices to show that $C^o_{2k}[m]$ counts at least one more subdiagram of $\vec{D}_{m,k}$ than $C^o_{2k}[n]$.

Any ascending subdiagram of $\vec{D}_{m,k}$ which is counted by $C^o_{2k}[n]$ is also counted by $C^o_{2k}[m]$.  This is because such a subdiagram must have at least one arrow whose $f^n$-label is less than $n+1$.  Hence, its $f^m$-label must also be less than $m+1$.

Now consider the subdiagram $D_k$ of $\vec{D}_{m,k}$ whose intersection graph is isomorphic to the complete graph on $2k$ vertices. By Lemma \ref{onecomplemm}, this subdiagram is ascending.  Recall that all of the arrows of $D_k$ are all $f^m$-labelled as $m$.  Then the $f^n$-label is $n+1$.  It follows that $D_k$ contributes $1$ to $c_{2k}[m]$ and $0$ to $c_{2k}[n]$. This completes the proof of the lemma.   
\end{proof}

It follows immediately that for all $k$, if $m_1 \ne m_2$, then there exists a long virtual knot $K$ such that $c_{2k}[m_1](K) \ne c_{2k}[m_2](K)$. Hence, $\nabla[m_1](K) \ne \nabla[m_2](K)$.

\subsubsection{The $\nabla[m]$ satisfy a skein relation} In this section, we show that the $f^m$-labelled Conway polynomials also satisfy a skein relation: 
\[
\nabla[m](K_{\oplus,\oplus})-\nabla[m](K_{\oplus,\ominus})-\nabla[m](K_{\ominus,\oplus})+\nabla[m](K_{\ominus,\ominus})=z^2 \nabla[m](K_{00}),
\]
where the affected crossings are linked, both $f^{\infty}$-labelled $\infty$, and the smoothing does not change the $f^{\infty}$-labels of the remaining crossings. This Conway quintuple is given in Figure \ref{conquint}. The relative configuration of the crossings is depicted in Figure \ref{infincrossarrows}.

\begin{figure}[h]
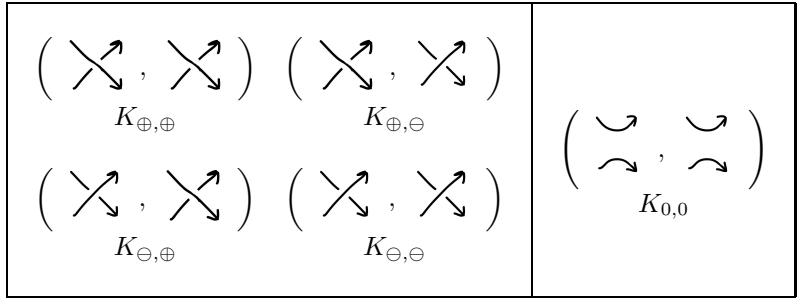

\[
\begin{array}{|c|c|} \hline & \\
\begin{array}{cc}
\left(\begin{array}{c}\scalebox{.04}{\psfig{figure=arrowdefplus.eps}} \end{array},
\begin{array}{c}\scalebox{.04}{\psfig{figure=arrowdefplus.eps}} \end{array} \right) & 
\left(\begin{array}{c}\scalebox{.04}{\psfig{figure=arrowdefplus.eps}} \end{array},\begin{array}{c}\scalebox{.04}{\psfig{figure=arrowpolydefminus.eps}} \end{array} \right) \\  K_{\oplus,\oplus} &  K_{\oplus,\ominus} \\ \\
\left(\begin{array}{c}\scalebox{.04}{\psfig{figure=arrowpolydefminus.eps}} \end{array},\begin{array}{c}\scalebox{.04}{\psfig{figure=arrowdefplus.eps}} \end{array} \right)&
\left(\begin{array}{c}\scalebox{.04}{\psfig{figure=arrowpolydefminus.eps}} \end{array},\begin{array}{c}\scalebox{.04}{\psfig{figure=arrowpolydefminus.eps}} \end{array} \right) \\   K_{\ominus,\oplus} &  K_{\ominus,\ominus} \\ & \\
\end{array} & 
\begin{array}{c}  
\left(\begin{array}{c}\scalebox{.04}{\psfig{figure=arrowpolydefsmooth.eps}} \end{array},\begin{array}{c}\scalebox{.04}{\psfig{figure=arrowpolydefsmooth.eps}} \end{array} \right) \\ K_{0,0}
\end{array} \\ \hline
\end{array}
\]
\caption{A virtual Conway quintuple.}\label{conquint}
\end{figure}
We note that when the arrows are crossed, then smoothing along both arrows gives another virtual knot i.e. the number of connected components is preserved. It is necessary to use a Conway quintuple as opposed to the traditional Conway triple for this very reason.

\begin{figure}[h]
\scalebox{.5}{\psfig{figure=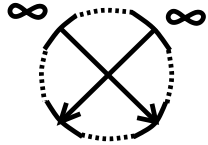}}
\caption{Configuration of arrows in the $f^m$-skein relation.} \label{infincrossarrows}
\end{figure}

\begin{lemma} Let $m$ be given.  Suppose that $(K_{\oplus,\oplus},K_{\oplus,\ominus},K_{\ominus,\oplus},K_{\ominus,\ominus}, K_{00})$ is a virtual Conway quintuple, such that the drawn arrows are linked, both $f^{\infty}$-labelled $\infty$, and such that the oriented smoothing does not change the $f^{\infty}$-labelling of the remaining arrows.  Then:
\[
c_{2k}[m](K_{\oplus,\oplus})-c_{2k}[m](K_{\oplus,\ominus})-c_{2k}[m](K_{\ominus,\oplus})+c_{2k}[m](K_{\ominus,\ominus})=c_{2k-2}[m](K_{00}).
\]
\end{lemma}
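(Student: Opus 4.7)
The plan is to substitute the defining expression
\[c_{2k}[m](K) = \langle C^e_{2k}[\infty],\, I[\infty]\circ \Lambda[\infty](D_K)\rangle + \langle C^o_{2k}[m],\, I[m]\circ \Lambda[m](D_K)\rangle\]
into both sides and analyze the signed alternating sum on the left-hand side one labelled subdiagram at a time. The four Gauss diagrams $D_{K_{\epsilon_1,\epsilon_2}}$, for $(\epsilon_1,\epsilon_2) \in \{+,-\}^2$, share the same underlying arrow structure and differ only in the signs on the two distinguished arrows $A, B$. By hypothesis these arrows carry $f^\infty$-label $\infty$, and the $f^\infty$-labels (hence also the $f^m$-labels) of every remaining arrow are preserved by the oriented smoothing $K_{\epsilon_1,\epsilon_2} \to K_{00}$.

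The first step is the standard two-fold Vassiliev-type cancellation. For a subdiagram $D' \subseteq D_{K_{\epsilon_1,\epsilon_2}}$ that omits both $A$ and $B$, the signed weight $s(D')$ is independent of $(\epsilon_1, \epsilon_2)$ and $\sum_{\epsilon_1,\epsilon_2} \epsilon_1 \epsilon_2 = 0$ kills the contribution. For a subdiagram containing exactly one of $A, B$, the weight carries a single factor $\epsilon_i$, and $\sum_{\epsilon_1,\epsilon_2} \epsilon_1 \epsilon_2 \cdot \epsilon_i = 0$. Hence only subdiagrams $D \supseteq \{A, B\}$ contribute to the left-hand side.

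For each surviving $D$ I would set $\bar D = D \setminus \{A, B\}$, viewed as a subdiagram of $D_{K_{00}}$ of size $2k-2$. The correspondence $D \mapsto \bar D$ is a bijection onto its image, and the Seifert smoothing of $D$ is obtained from that of $\bar D$ by attaching precisely the two bands that realize the oriented smoothing taking $K_{\epsilon_1,\epsilon_2}$ to $K_{00}$. This identifies the one-component property and the traversal at the arrows of $\bar D$ between the two pictures, which is the input needed to match ascending subdiagrams on the two sides. The label-preservation hypothesis ensures that the $f^m$-label data on $\bar D$ in $D_{K_{00}}$ agrees with the restriction of the $f^m$-label data of $D$ in $D_{K_{\epsilon_1,\epsilon_2}}$; since $A$ and $B$ both carry label $\infty$, their removal does not alter the even/odd classification, so the split into $C^e[\infty]$-piece and $C^o[m]$-piece for $D$ coincides with that for $\bar D$. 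Finally, using $s(D) = \epsilon_1\epsilon_2\, s(\bar D)$, the surviving contributions reassemble, under the sum over $(\epsilon_1,\epsilon_2)$ and the bijection, into $c_{2k-2}[m](K_{00})$.

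The main obstacle is the ascending correspondence: I would need to show that $D = \bar D \cup \{A, B\}$ is ascending in $K_{\epsilon_1,\epsilon_2}$ precisely when $\bar D$ is ascending in $K_{00}$, which requires a careful geometric check of the first-pass-in-arrow-direction condition at $A$ and $B$ using the specific linked, $f^\infty$-trivial configuration of Figure \ref{infincrossarrows}. This is the step where the Conway-quintuple hypothesis (both arrows linked, both labelled $\infty$, and smoothing preserving labels) is used essentially; given that geometric fact, the sign and label bookkeeping above close the identity.
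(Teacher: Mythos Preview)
Your overall strategy matches the paper's: cancel subdiagrams containing $0$ or $1$ of the distinguished arrows, then set up a bijection between the surviving subdiagrams on the left and the ascending subdiagrams of $K_{00}$ on the right. The label bookkeeping you describe (the two special arrows carry label $\infty$, so removing them does not disturb the even/odd split) is exactly what the paper uses as well.

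There is, however, a genuine gap in your ascending correspondence. You assert that the four Gauss diagrams $D_{K_{\varepsilon_1,\varepsilon_2}}$ ``differ only in the signs on the two distinguished arrows $A,B$''. This is not correct: a crossing change in a knot diagram corresponds, at the level of the Gauss diagram, to reversing the \emph{direction} of the arrow as well as flipping its sign. Since ``ascending'' is purely a condition on arrow directions (first pass in the direction of the arrow), whether $D$ is ascending genuinely depends on $(\varepsilon_1,\varepsilon_2)$.

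Consequently your target statement ``$D=\bar D\cup\{A,B\}$ is ascending in $K_{\varepsilon_1,\varepsilon_2}$ precisely when $\bar D$ is ascending in $K_{00}$'' is the wrong claim. If it held, each ascending $\bar D$ would contribute to all four terms on the left, and combining $s(D)=\varepsilon_1\varepsilon_2\, s(\bar D)$ with the outer coefficient $\varepsilon_1\varepsilon_2$ you would overshoot by a factor of $4$. What the paper proves instead is this: given an ascending one-component $\bar D\subset K_{00}$, add back the two chords and note that the traversal of the resulting one-component diagram forces a \emph{unique} direction on each of $A$ and $B$ in order to keep the diagram ascending; since the four sign choices on the left correspond exactly to the four direction choices for $(A,B)$, this picks out exactly one pair $(\varepsilon_1,\varepsilon_2)$, and only that single term contributes. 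With this correction, your sign computation $\varepsilon_1\varepsilon_2\cdot\varepsilon_1\varepsilon_2\, s(\bar D)=s(\bar D)$ closes the identity as intended.
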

\begin{proof} The proof is similar to the proof of the skein relation in \cite{CKR}.  We set up a one-to-one correspondence of ascending diagrams involved in the relation and show that they are counted with the same weight.

Consider $f^m$-labelled subdiagrams of $K_{\varepsilon_1,\varepsilon_2}$ where $\varepsilon_1 \cdot \varepsilon_2 \ne 0$. Note that all corresponding arrows for the LHS diagrams have the same $f^m$-label. A subdiagram may contain $0$, $1$, or $2$ of the drawn arrows. Diagrams with zero arrows have no contribution on LHS. Every subdiagram which has one of the drawn arrows occurs also as a subdiagram of some $K_{\varepsilon_3,\varepsilon_4}$ such that the coefficient $c_{2k}[m](K_{\varepsilon_3,\varepsilon_4})$ is $-1$ times the coefficient of $c_{2k}[m](K_{\varepsilon_1,\varepsilon_2})$. Hence, there is no contribution.   

Hence we must show that there is a one-to-one correspondence between those ascending subdiagrams on LHS which contain both of the drawn arrows and the ascending subdiagrams of $K_{00}$. Moreover, the correspondence must preserve the weight of each diagram. It is sufficient to show that if $D_{00}$ is an ascending subdiagram of $K_{00}$ which is counted by $C^e_{2k}[\infty]$ or $C^o_{2k}[m]$, then there is exactly one $\varepsilon_1$ and $\varepsilon_2$ such that $D_{\varepsilon_1, \varepsilon_2}$ is an ascending subdiagram of $K_{\varepsilon_1,\varepsilon_2}$ and such that $D_{00}$ is counted by the corresponding formula $C^e_{2k}[\infty]$ or $C^o_{2k}[m]$ with the same weight.

Fix a pair of signs $\varepsilon_1,\varepsilon_2$ and an ascending subdiagram $D$ of $K_{\varepsilon_1,\varepsilon_2}$ which contains both of the drawn arrows. Let $D_{00}$ denote the Gauss diagram obtained from $D$ by smoothing along the two arrows. We may consider this as a subdiagram of $K_{00}$. It is easy to see that $D_{00}$ is still ascending. If all the $f^{\infty}$-labels of $D$ are $\infty$, then $C^e_{2k}[\infty]$ counts $D$ with a weight of $\varepsilon_1 \cdot \varepsilon_2 \cdot \sigma$ while the $C_{2k-2}^{e}[\infty]$ counts $D_{00}$ with a weight of $\sigma$.  We see that the total contribution to LHS and RHS is the same. If every arrow of $D$ is $f^{\infty}$-labelled $m+1$ or greater then there is no contribution on LHS or RHS of the equation. Finally suppose that there is an arrow of $D$ having $f^{\infty}$-label less than $m+1$. Then by hypothesis, $D_{00}$ (considered again as a subdiagram of $K_{00}$) also has an arrow with $f^m$-label less than $m+1$).  Hence, there is an equal contribution on LHS from $C^o_{2k}[m]$ and on RHS from $C^o_{2k-2}[m]$.

Now we consider subdiagrams of $K_{00}$. First note that the virtual knot diagram $K_{00}$ specifies a pair of arcs $A_1,A_3$ from the smoothing of the first crossing and a pair of arcs $A_2,A_4$ from the smoothing of the second crossing.  This can be used to draw a homeomorphic copy of $\mathbb{R}$ as in Figure \ref{nonstanr}. We may draw on this copy of $\mathbb{R}$ a Gauss diagram of $K_{00}$ (or any of its subdiagrams) that preserves the order of passing of the arcs $A_i$. 

Let $D_{00}$ be an ascending one component subdiagram of $K_{00}$. Delete the arcs $A_i$ and fill in the intervals $\alpha_j$ on $\mathbb{R}$. Draw two chords $x$ and $y$ with endpoints in $\alpha_a$, $\alpha_c$ and $\alpha_b,\alpha_d$. Orient these chords so that the diagram is ascending. As all directions of these two chords are included on LHS of the skein relation, these directions specify a unique choice of signs $\varepsilon_1, \varepsilon_2$. We denote the constructed subdiagram of $K_{\varepsilon_1,\varepsilon_2}$ as $D_{\varepsilon_1,\varepsilon_2}$.

Note also that the $f^m$-labels of the arrows $x$ and $y$ are $m+1$ in all of $K_{\oplus,\oplus}$, $K_{\oplus,\ominus}$, $K_{\ominus,\oplus}$, and $K_{\ominus,\ominus}$. Hence we may give the $f^m$-label of the arrows $x$ and $y$ in $D_{\varepsilon_1,\varepsilon_2}$ to be $m+1$. Moreover, the $f^{\infty}$-labels of $x$ and $y$ are both $\infty$.

Now, if the product of the signs in $D_{00}$ is $\sigma$, then the product of the signs in $D_{\varepsilon_1,\varepsilon_2}$ is $\varepsilon_1 \cdot \varepsilon_2 \cdot \sigma$. Hence the weights of the $D_{00}$ and $D_{\varepsilon_1,\varepsilon_2}$ will be the same whenever they are both counted.  

\end{proof}
\begin{figure}[h]
\scalebox{.11}{\psfig{figure=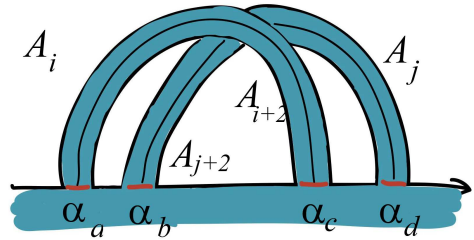}}
\caption{A non-standard copy of $\mathbb{R}$.} \label{nonstanr}
\end{figure}

\begin{theorem} The $f^m$-labelled Conway polynomials satisfy a skein relation: 
\[
\nabla[m](K_{\oplus,\oplus})-\nabla[m](K_{\oplus,\ominus})-\nabla[m](K_{\ominus,\oplus})+\nabla[m](K_{\ominus,\ominus})=z^2 \nabla[m](K_{00}),
\]
where the affected crossings are linked, both $f^{\infty}$-labelled $\infty$, and the smoothing does not change the $f^{\infty}$-labels of the remaining crossings.
\end{theorem}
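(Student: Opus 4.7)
The plan is to deduce the skein relation directly from the coefficient-level identity proved in the preceding lemma. Since the lemma already shows that for each $k\ge 1$
\[
c_{2k}[m](K_{\oplus,\oplus})-c_{2k}[m](K_{\oplus,\ominus})-c_{2k}[m](K_{\ominus,\oplus})+c_{2k}[m](K_{\ominus,\ominus})=c_{2k-2}[m](K_{00}),
\]
my approach is to multiply each such identity by $z^{2k}$ and sum over $k\ge 1$, then reassemble the four left-hand sums into the values of $\nabla[m]$ at the four knots $K_{\varepsilon_1,\varepsilon_2}$ and the right-hand sum into $z^2\nabla[m](K_{00})$ after a shift of index.

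First I would handle the $k=0$ term separately, which is the only subtlety. By construction $C_0$ is the empty diagram (the unique ascending diagram on zero arrows), so $c_0[m](K)=1$ for every long virtual knot $K$; in particular $c_0[m]$ takes the same value on $K_{\oplus,\oplus}$, $K_{\oplus,\ominus}$, $K_{\ominus,\oplus}$, and $K_{\ominus,\ominus}$, and the alternating combination $1-1-1+1=0$ vanishes. This is exactly what is needed, because the lemma does not supply (and cannot produce) a $c_{-2}[m](K_{00})$ term; equivalently, the $k=0$ contribution to the left-hand side of the desired skein relation is $0$, and so no $z^0$ term is missing from the right-hand side.

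Next I would perform the reindexing $j=k-1$ on the right-hand side:
\[
\sum_{k\ge 1} c_{2k-2}[m](K_{00})\,z^{2k}=z^2\sum_{j\ge 0} c_{2j}[m](K_{00})\,z^{2j}=z^2\,\nabla[m](K_{00}).
\]
Combining this with the vanishing of the $k=0$ term on the left yields
\[
\nabla[m](K_{\oplus,\oplus})-\nabla[m](K_{\oplus,\ominus})-\nabla[m](K_{\ominus,\oplus})+\nabla[m](K_{\ominus,\ominus})=z^2\,\nabla[m](K_{00}),
\]
which is the claimed skein relation.

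The main obstacle, if any, is really just bookkeeping: one must confirm that the hypotheses of the preceding lemma (linked arrows, both $f^{\infty}$-labelled $\infty$, and smoothing preserving the remaining $f^{\infty}$-labels) are precisely the hypotheses imposed in the theorem, so that the coefficient identity applies verbatim to every $k\ge 1$. Once that is observed, the proof reduces to the termwise summation above.
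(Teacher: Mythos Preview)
Your proposal is correct and follows essentially the same approach as the paper: both deduce the skein relation by comparing the $z^{2k}$ coefficient on the left with the $z^{2k-2}$ coefficient on the right via the preceding lemma. Your write-up is simply more explicit, in particular in treating the $k=0$ term and carrying out the reindexing, which the paper leaves implicit.
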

\begin{proof} We compare the coefficients of $z^{2n}$ on the left hand side of the equation with the coefficient of $z^{2n-2}$ on the right hand side of the equation.  By the previous lemma, the weighted sum of the coefficients on the left must be the same as the coefficient on the right. This completes the proof.
\end{proof}

\section{Property \ref{prop4}: Infinite Dimensionality of Lattice Columns} \label{prop4sec} Let $P$ be the Gaussian parity. First we will give a set of simple finite-type invariants. Then we will show that they have an infinite dimensional subspace. Finally, we will use discrete calculus to show that these finite-type invariants are represented by combinatorial formulae in the lattice.

\label{fmwords} For $D \in \vec{\mathscr{A}}[m]$, let  $\theta[m]\left(D, \uparrow_{\oplus}^k \right)$ denote the number of arrows of $D$ signed $\oplus$ and labelled $k$.  Similarly, let $\theta[m]\left(D,\uparrow_{\ominus}^k \right)$ denote the number of arrows of $D$ signed $\ominus$ and labelled $k$. For $1 \le k \le m$, we define a function $\theta[m|k]:\mathbb{Z}[\vec{\mathscr{A}}[m]] \to \mathbb{Z}$ on generators by:
\[
\theta[m|k](D)=\theta[m]\left(D, \uparrow_{\oplus}^k \right)-\theta[m]\left(D, \uparrow_{\ominus}^k \right).
\]
Note that the definition of $\theta[m|k]$ is the same for Gauss diagrams on $\mathbb{R}$ and Gauss diagrams on $S^1$.

\begin{lemma} \label{thetaok} The function $\theta[m|k] \circ \Lambda[m]: \mathbb{Z}[\mathscr{D}] \to \mathbb{Z}$ is an invariant of virtual knots/long virtual knots.
\end{lemma}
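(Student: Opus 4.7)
The plan is to verify invariance of $\theta[m|k]\circ \Lambda[m]$ under each of the three Reidemeister moves of Figure \ref{gaussmoves}, using only the three properties of $f^m$-labelling listed just after the definition of $\Lambda[m]$ together with the parity axioms. Throughout, I fix $1 \le k \le m$ and note that any arrow carrying the label $m+1$ is invisible to $\theta[m|k]$.

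For $\Omega 1$, the created or destroyed arrow is isolated, so property (1) of the $f^m$-labelling assigns it the label $m+1$, contributing $0$ to $\theta[m|k]$. It remains to check that the $f^m$-labels of the other arrows are unchanged; this follows from parity axiom (4) combined with the observation that, if $D'=D\sqcup\{x_0\}$ with $x_0$ isolated and even, then an easy induction gives $f^i(D')=f^i(D)\sqcup\{x_0\}$ for every $i$, so an arrow $y\in C(D)$ lies in $C(f^i(D))$ iff it lies in $C(f^i(D'))$.

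For $\Omega 2$, property (2) of the $f^m$-labelling says that the two affected arrows share a common $f^m$-label, while parity axiom (2) (together with the reduced set of $\Omega 2$ moves used in Figure \ref{gaussmoves}) forces them to carry opposite signs $\oplus$ and $\ominus$. Hence, if their common label equals $k$ they contribute $+1$ and $-1$ to $\theta[m|k]$, cancelling; otherwise they contribute $0$. The remaining arrows keep their labels, again by property (2). For $\Omega 3$, property (3) guarantees that the three corresponding affected arrows on LHS and RHS carry identical labels, and their signs are preserved by the move; the unaffected arrows retain their labels by parity axiom (4), iterated through the fact noted in the background that $f(D)$ and $f(D')$ are either equal or related by a Reidemeister move, so that each $f^i(D)$ and $f^i(D')$ agree on unaffected arrows. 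In all three cases $\theta[m|k]\circ \Lambda[m]$ is unchanged.

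There is no real obstacle; this is essentially bookkeeping. The only substantive ingredient is the opposite-sign cancellation in $\Omega 2$, which is what made the definition of $\theta[m|k]$ as a \emph{signed} difference necessary in the first place. The rest is a direct unpacking of the three labelling properties already granted by the paper.
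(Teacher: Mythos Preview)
Your proof is correct and follows essentially the same approach as the paper, which disposes of the lemma in a single sentence (``This follows from definition of the functorial map $f$.''). You have simply unpacked that sentence into an explicit Reidemeister-by-Reidemeister verification using the three $f^m$-labelling properties and the parity axioms; there is no substantive difference in strategy.
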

\begin{proof} This follows from definition of the functorial map $f$.
\end{proof}

For $t \in \mathbb{N}$, define $\theta_t[m|k]:\vec{\mathscr{A}}[m] \to \mathbb{Z}$ by:
\[
\theta_t[m|k](D)=\underbrace{\theta[m|k](D) \cdot \theta[m|k](D) \cdot \ldots \cdot \theta[m|k](D) }_{t \text{ times}}.
\]
In addition, we have the map $\theta_{t_1,\ldots,t_s}[m|k_1,\ldots, k_s]:\vec{\mathscr{A}}[m] \to \mathbb{Z}$ which is defined by:
\[
\theta_{t_1,\ldots,t_s}[m|k_1,\ldots, k_s](D)=\prod_{j=1}^s \theta_{t_j}[m|k_j](D).
\]
For simplicity, we will always assume that the set $\{k_1,\ldots, k_s \}$ of labels has exactly $s$ elements (i.e. all the $k_i$ are distinct).

\begin{theorem} Let $P$ be the Gaussian parity. Let $n \in \mathbb{N}$.  Let $\mathscr{S}_n=\{\theta_n[m|m]:m \in \mathbb{N}\}$.  Then $\mathscr{S}_n$ is a rationally linearly independent set of Kauffman finite-type invariants of degree exactly $n$.  Hence, the set of Kauffman finite-type invariants of degree exactly $n$ is infinite dimensional for every $n$.  
\end{theorem}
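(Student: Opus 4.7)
The plan is to establish three claims: (A) each $\theta_n[m|m]$ is Kauffman finite-type of degree at most $n$; (B) each has degree not less than $n$; and (C) the family $\mathscr{S}_n$ is linearly independent over $\mathbb{Q}$. Items (A) and (B) together yield degree exactly $n$; combined with (C), this gives the infinite-dimensional conclusion.

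For (A), I would first realize $\theta[m|m] \circ \Lambda[m]$ as a Kauffman finite-type invariant of degree $\le 1$ by applying Lemma \ref{kauffft}. Define $v \in \text{Hom}_{\mathbb{Z}}(\vec{\mathscr{X}}_1[m], \mathbb{Q})$ to be the functional sending a one-arrow diagram of label $m$ and sign $\epsilon$ to $\epsilon$, and every other one-arrow diagram (including the empty one) to $0$. The only relation of $\vec{\mathscr{X}}_1[m]$ that lives among one-arrow diagrams is $\vec{\text{Q1}}[m]$, whose arrow carries the forbidden label $m+1 \ne m$, so $v$ respects it; the relations $\vec{\text{Q2}}[m]$ and $\vec{\text{Q3}}[m]$ are trivially killed in the quotient $\vec{\mathscr{X}}_1[m]$. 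Unrolling the subdiagram sum in $I[m]$ gives the identity $v \circ I[m] \circ \Lambda[m] = \theta[m|m] \circ \Lambda[m]$, so Lemma \ref{kauffft} applies. A standard Leibniz argument for the crossing-difference operator (if $\Delta^{p+1} u = 0$ and $\Delta^{q+1} w = 0$, then $\Delta^{p+q+1}(uw) = 0$) then shows that $\theta_n[m|m] = \theta[m|m]^n$ is a finite-type invariant of degree $\le n$.

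For (B), I would exhibit a singular long virtual knot $K_n[m]$ on which $\theta_n[m|m]$ does not vanish. Let $K_n[m]$ be the disjoint union of $n$ copies of the earring $P_{2m}$. Because Gaussian parity deletes the two endpoint arrows of a path at each application of $f$, the two middle arrows of $P_{2m}$ are precisely those with $f^m$-label $m$. Place a double point at one of these two middle arrows in each copy and sign the other with $+$. Since Gaussian parity depends only on the intersection graph (not on signs), all $f^m$-labels are invariant under resolution, so for any $\epsilon \in \{\pm 1\}^n$,
\[
\theta[m|m](K_n[m]_\epsilon) = n + \sum_{i=1}^n \epsilon_i.
\]
Hence
\[
\theta_n[m|m](K_n[m]) = \sum_{\epsilon \in \{\pm 1\}^n} \left(\prod_{i=1}^n \epsilon_i\right)\left(n + \sum_{i=1}^n \epsilon_i\right)^n.
\]
Using $\epsilon_i^2 = 1$, only the multilinear monomial $\epsilon_1 \cdots \epsilon_n$ of the expansion survives the signed sum, and the multinomial theorem gives its coefficient as $n!$. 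Therefore $\theta_n[m|m](K_n[m]) = 2^n n! \ne 0$, showing the degree is at least $n$.

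For (C), I would take test diagrams $D_{m'}$ whose Gauss diagrams are $P_{2m'}$ with signs assigned so that for each $k < m'$ the two arrows of $f^\infty$-label $k$ carry opposite signs, while the two arrows of label $m'$ both carry sign $+$. A direct case split gives $\theta[m|m](D_{m'}) = 2\delta_{m,m'}$: for $m > m'$ the diagram has no arrow of label $m$; for $m < m'$ the two such arrows cancel by construction; for $m = m'$ they add. Consequently $\theta_n[m|m](D_{m'}) = 2^n \delta_{m,m'}$, so the evaluation matrix of $\mathscr{S}_n$ against $\{D_{m'}\}$ is diagonal with nonzero diagonal, forcing linear independence. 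The main technical obstacle is the singular-resolution computation in (B): one must simultaneously verify the invariance of the $f^m$-labels across all $2^n$ resolutions (which reduces to sign-independence of the intersection graph) and carry out the multinomial identity cleanly; once that is set up, the remaining steps are routine bookkeeping.
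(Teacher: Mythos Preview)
Your proof is correct. For linear independence (your part (C)) the approach is essentially identical to the paper's: both take a chord diagram whose intersection graph is a path of length $2m'$ and assign signs so that the two arrows of $f^{\infty}$-label $k$ cancel for each $k<m'$ but add for $k=m'$, producing the diagonal evaluation $\theta_n[m|m](D_{m'})=2^n\delta_{m,m'}$. The paper's test diagram $L_m$ (the example diagram $D_{m,2}$ with its $m-1$ rightmost arrows signed $\ominus$) is exactly this object.

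Where you genuinely diverge from the paper is in establishing that the degree is \emph{exactly} $n$. The paper dispatches this in a single line by invoking a ``twist sequence argument'' from an external reference. Your route is self-contained: you concatenate $n$ copies of the earring $P_{2m}$, place one double point on a middle arrow of each copy, note that Gaussian parity (hence every $f^m$-label) is insensitive to the sign resolution, and reduce the nonvanishing to the identity
\[
\sum_{\epsilon\in\{\pm1\}^n}\Bigl(\prod_i\epsilon_i\Bigr)\Bigl(n+\sum_i\epsilon_i\Bigr)^n=2^n\,n!.
\]
This buys independence from the cited literature at the modest cost of that multinomial computation; the paper's version buys brevity but requires the reader to chase the reference. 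Your treatment of (A) via Lemma~\ref{kauffft} plus the Leibniz product rule is likewise more explicit than the paper's ``easy to see,'' but follows the same underlying logic.
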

\begin{proof} It is easy to see that $\theta_1[m|m]$ is a Kauffman finite-type invariant of degree $1$. To prove $\theta_n[m|m]$ is of degree exactly $n$, one can use a twist sequence argument (see \cite{C1}).

We will first show that for every $m$, there is a virtual knot $L_m$ such that $\theta_n[m|m](L_m) \ne 0$ and if $m_1<m$, then $\theta_n[m_1|m_1]=0$. Indeed, we take $L_m$ to be $D_{m,2}$ (see Example, Section \ref{earring}) except that the $m-1$ most rightward arrows are signed $\ominus$. Also note that the arrow directions are irrelevant. We compute:
\[
\theta_n[m_1|m_1](L_m)=\left\{\begin{array}{cc} 0 & m_1 \neq m \\ 2^n  & m_1 = m \end{array} \right. .
\]
Now suppose that there are $m_1,\ldots, m_k$, $m_i \le m_j$ for $i \le j$, and $\alpha_1 \ldots , \alpha_k \in \mathbb{Q}$ such that:
\[
\alpha_1 \theta_n[m_1|m_1]+\ldots+\alpha_k \theta_n[m_k|m_k]=0.
\]
For each $i$, $1 \le i \le k$, evaluate both sides of this equation at $L_{m_i}$. It follows that $\alpha_i=0$. Hence, $\mathscr{S}_n$ is linearly independent over $\mathbb{Q}$.   
\end{proof}

\begin{theorem} \label{thetacomb} The invariant $\theta_{t_1,\ldots, t_s}[m|k_1,\ldots,k_s]$ is represented by a combinatorial formula in the lattice:
\[
\left<F_{t_1,t_2,\ldots,t_s}[m|k_1,k_2,\ldots k_s], \cdot \right> \in \text{Hom}_{\mathbb{Z}}(\vec{\mathscr{X}}_{t_1+\ldots+t_s}[m],\mathbb{Q}).
\]
Hence, the columns of the lattice represent an infinite dimensional space of Kauffman finite-type invariants when $P$ is the Gaussian parity.
\end{theorem}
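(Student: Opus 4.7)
The plan is to construct the formula $F_{t_1,\ldots,t_s}[m|k_1,\ldots,k_s]$ explicitly via discrete calculus on the $f^m$-labelled subdiagram poset. First observe that the degree-one invariant $\theta[m|k]$ already has a tautological one-arrow representation: writing $e_{\pm}^k \in \vec{\mathscr{A}}[m]$ for the single dashed arrow with sign $\pm$ and label $k$, the one-arrow subdiagrams of $\Lambda[m](D_K)$ are exactly its individual $f^m$-labelled arrows, so
\[
\theta[m|k](D_K) = \left\langle e_+^k - e_-^k,\; I[m]\circ\Lambda[m](D_K)\right\rangle.
\]

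For the general invariant, I would expand
\[
\prod_{j=1}^s \theta[m|k_j](D_K)^{t_j} = \prod_{j=1}^s \left(\sum_{x \in C(D_K),\,\ell(x)=k_j} \varepsilon(x)\right)^{t_j}
\]
into a polynomial of total degree $n := t_1+\ldots+t_s$ in the per-arrow sign variables $\varepsilon(x)$. Collecting the resulting monomials by their underlying set of arrows and using $\varepsilon(x)^2 = 1$ (together with the fact that arrows with distinct prescribed labels $k_j\neq k_{j'}$ are automatically different) expresses the invariant as an explicit rational linear combination of subdiagram pairings $\langle G, I[m]\circ\Lambda[m](D_K)\rangle$, each $G \in \vec{\mathscr{A}}[m]$ having at most $n$ arrows with the prescribed label multiplicities. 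Gathering these $G$'s with their combinatorial coefficients defines $F_{t_1,\ldots,t_s}[m|k_1,\ldots,k_s]$. The same object can be described intrinsically by M\"obius inversion on the subdiagram poset: set
\[
c(D) := \sum_{D'\subseteq D}(-1)^{|D|-|D'|}\,\theta_{t_1,\ldots,t_s}[m|k_1,\ldots,k_s](D'),\qquad D \in \vec{\mathscr{A}}[m],
\]
(interpreting the invariant on arrow diagrams in the obvious way) and take $F := \sum_D c(D)\,D$; polynomiality of degree $n$ in the sign variables forces $c(D)=0$ once $|D|>n$, so the sum is finite.

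The main obstacle is verifying that $\langle F, \cdot\rangle$ truly descends to the quotient $\vec{\mathscr{X}}_n[m]$. Vanishing on $\vec{A}_n[m]$ is immediate from the support bound $|D|\le n$. For the $\vec{\text{Q1}}[m]$, $\vec{\text{Q2}}[m]$, $\vec{\text{Q3}}[m]$ relations, I would invoke the remark already made in Section~\ref{fmlabel} that these generate the image under $I[m]\circ\Lambda[m]$ of the Reidemeister moves of Figure~\ref{gaussmoves}. Because $\theta_{t_1,\ldots,t_s}[m|k_1,\ldots,k_s]\circ\Lambda[m]$ is a finite product of virtual knot invariants (Lemma~\ref{thetaok}) and is therefore Reidemeister invariant itself, the composition $\langle F,\cdot\rangle\circ I[m]\circ\Lambda[m]$ is Reidemeister invariant, which forces $\langle F,\cdot\rangle$ to annihilate every one of these generating relators.

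The infinite-dimensionality conclusion is then a direct assembly. For each $m\in\mathbb{N}$, the invariant $\theta_n[m|m]$ is represented by $F_n[m|m] \in \text{Hom}_{\mathbb{Z}}(\vec{\mathscr{X}}_n[m],\mathbb{Q})$; pulling all of these back along the duals of the vertical maps $d_n[\infty\to m]$ of Lemma~\ref{dmnlemm} produces a countable family inside $\text{Hom}_{\mathbb{Z}}(\vec{\mathscr{X}}_n[\infty],\mathbb{Q})$. Linear independence of $\mathscr{S}_n$ as a set of virtual knot invariants, established in the preceding theorem, transfers immediately to linear independence of this family, yielding the advertised infinite-dimensional subspace of Kauffman finite-type invariants of degree exactly $n$ sitting in the $n$-th column of the lattice.
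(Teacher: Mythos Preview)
Your construction of $F$ is correct and in fact coincides with the paper's: since $\theta_{t_1,\ldots,t_s}[m|k_1,\ldots,k_s]$ depends only on the count vector of arrows of each (sign, label) type, your M\"obius coefficient $c(D)$ equals the iterated forward difference $\partial^{a_1,\ldots,a_s,a_1',\ldots,a_s'}\theta_{t_1,\ldots,t_s}(\vec 0)$ that the paper writes down, and the support bound $|D|\le n$ follows as you say.

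There is, however, a genuine gap in your descent argument. You deduce that $\langle F,\cdot\rangle$ kills every $\vec{\text{Q}}$-relator from Reidemeister invariance of $\theta_{t_1,\ldots,t_s}\circ\Lambda[m]$, citing the remark that the $\vec{\text{Q}}$-relations ``generate the image'' of the Reidemeister moves under $I[m]\circ\Lambda[m]$. That inclusion runs the wrong way for your purpose: the image of honest Reidemeister moves sits \emph{inside} $\langle\vec{\text{Q1}}[m],\vec{\text{Q2}}[m],\vec{\text{Q3}}[m]\rangle$, and the containment is in general proper, because the undrawn arrows of a $\vec{\text{Q}}$-relator carry \emph{arbitrary} labels whereas only actual $f^m$-labellings occur in the image of $\Lambda[m]$. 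Reidemeister invariance at the Gauss-diagram level therefore yields vanishing only on a proper subgroup of the relator group, which does not suffice to land in $\text{Hom}_{\mathbb{Z}}(\vec{\mathscr{X}}_n[m],\mathbb{Q})$. The fix is small but essential: observe that $\theta_{t_1,\ldots,t_s}[m|k_1,\ldots,k_s]$ is already defined on all of $\vec{\mathscr{A}}[m]$ (it reads only the multiset of signed labels) and is invariant under the \emph{formal} moves that the $\vec{\text{Q}}$-relations encode --- an isolated arrow labelled $m+1\notin\{k_1,\ldots,k_s\}$ is irrelevant, the two $\vec{\text{Q2}}[m]$ arrows have opposite signs and equal label and hence cancel, and a $\vec{\text{Q3}}[m]$ move preserves every signed label. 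Since your own M\"obius inversion gives $\langle F,I[m](D)\rangle=\theta_{t_1,\ldots,t_s}(D)$ for every $D\in\vec{\mathscr{A}}[m]$, and each $\vec{\text{Q}}$-relator is recovered from these formal-move differences by a further M\"obius inversion over the undrawn part, $\langle F,\cdot\rangle$ does annihilate every $\vec{\text{Q}}$-relator. With this repair in place, your assembly of the infinite-dimensionality conclusion is correct.
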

\begin{proof} Recall the definition of the $i$-th discrete derivative of a function $F:\mathbb{Z}^w \to G$, where $G$ is an abelian group.
\[
\partial^{0,\ldots,1,\ldots,0} F(x_1,\ldots,x_i,\ldots,x_w)=F(x_1,\ldots,x_i+1,\ldots,x_w)-F(x_1,\ldots,x_i,\ldots,x_w).
\]
Using ``equality of mixed partials'', we can define for any $w$-tuple of nonnegative integers $(a_1,\ldots,a_w)$ a derivative $\partial^{a_1,\ldots,a_w}$.  We may interpret $\theta_{t_1,\ldots, t_s}[m|k_1,\ldots,k_s]$ as a function $\mathbb{Z}^{(w,w')} \to \mathbb{Z}$, where $w=(a_1,\ldots,a_s)$ is the number of arrows of the form $\uparrow_{\oplus}^{k_1},\ldots,\uparrow_{\oplus}^{k_s}$ and $w'=(a_1',\ldots,a_s')$ is the number of arrows of the form $\uparrow_{\ominus}^{k_1},\ldots,\uparrow_{\ominus}^{k_s}$ (see \cite{CM} for more details).

Let $D(a_1,\ldots,a_s,a_1',\ldots,a_s')$ denote the subset of diagrams $D \in \vec{\mathscr{A}}[m]$ such that for each $i$, $1 \le i \le s$ , $D$ has exactly $a_i$ arrows of the form $\uparrow_{\oplus}^{k_i}$ and exactly $a_i'$ arrows of the form $\uparrow_{\ominus}^{k_i}$. The combinatorial formula is given by:
\begin{eqnarray*}
F_{t_1,t_2,\ldots,t_s}[m|k_1,k_2,\ldots k_s] &=& \sum_{\stackrel{(a_i,a_i')}{0 \le \sum a_i+a_i' \le \sum t_i}} c(a_i,a_i') \sum_{D \in D(a_i,a_i')} D ,\\
c(a_1,a_2,\ldots,a_s,a_1',a_2',\ldots,a_s') &=& \partial^{a_1,a_2,\ldots,a_s,a_1',a_2',\ldots,a_s'} \theta_{t_1,\ldots, t_s}[m|k_1,\ldots,k_s](\vec{0}).
\end{eqnarray*}
The proof that this really is a combinatorial formula in $\text{Hom}_{\mathbb{Z}}(\vec{\mathscr{X}}_{t_1+\ldots+t_s}[m],\mathbb{Q})$ follows exactly as in \cite{CM}.
\end{proof}
  
\section{Property \ref{prop5}: Bounds on Rank of $\vec{\mathscr{X}}_t[m]$} \label{prop5sec} In the present section, we compute bounds for the rank of each group in the lattice.  For simplicity, we consider only Gauss diagrams on $\mathbb{R}$. The upper bound is crude and is done using only combinatorial considerations.  A lower bound is found by considering the action on labelled diagrams by arrow ``flipping''.  

\subsection{$\Omega_t[m]$ and an upper bound on the rank} \label{omegatm} We compute a crude upper bound on the rank $\rho_t[m]$. Define $\Omega_t[m]$ by the following sum.

\begin{eqnarray*} 
\Omega_t[m] &=& 4m + (2t-1)!!2^t(m+1)^{t}-\sum_{j=1}^{\lfloor \frac{2t+1}{3} \rfloor} { 2 (t-j)+1 \choose j }(2(t-j)-1)!!2^{t} m^{t-j} \\
          &+& \sum_{k=2}^{t-1}(2k-1)!!2^{2k}(m+1)^k- \sum_{k=2}^{t-1}\sum_{j=1}^{\lfloor \frac{2k+1}{3} \rfloor} { 2 (k-j)+1 \choose j }(2(k-j)-1)!!2^{2k}m^{k-j}.
\end{eqnarray*}

\begin{lemma} The rank of $\vec{\mathscr{X}}_t[m]$ is not more than $\Omega_t[m]$.
\end{lemma}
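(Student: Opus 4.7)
The plan is to bound $\rho_t[m]$ by the standard ``generators minus relations'' technique: count the labelled signed directed Gauss diagrams that generate $\vec{\mathscr{X}}_t[m]$, then subtract a lower bound on the number of $\mathbb{Z}$-independent Q1, Q2, and Q3 relations. A generator is a Gauss diagram on $\mathbb{R}$ with $k \le t$ arrows, each arrow independently carrying a sign, a direction, and a label in $\{1,\ldots,m+1\}$. For fixed $k$ there are $(2k-1)!!$ pairings of the $2k$ endpoints on the line, with each arrow contributing an additional factor $4 = 2^2$ for the sign-direction data and $m+1$ for the label, giving $(2k-1)!!\cdot 2^{2k}\cdot(m+1)^k$ order-$k$ generators. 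The constant $4m$ in $\Omega_t[m]$ would record the $k=1$ generators surviving Q1 (which annihilates the $4$ isolated arrows of label $m+1$), while the raw middle-order sum $\sum_{k=2}^{t-1}(2k-1)!!\,2^{2k}(m+1)^k$ records orders $2 \le k \le t-1$.

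At top order $k=t$, I would apply the Q2 reduction. A Q2 relation identifies a two-arrow crossing of opposite signs and equal label $\ell$ with its two single-arrow subsummands. Because the single-arrow summands have order $t-1$ and are no longer present at order $t$ (they are quotiented out by $\vec{A}_t[m]$), at the top order this identification cuts one factor of $2$ per arrow pair, producing the term $(2t-1)!!\,2^{t}(m+1)^t$ rather than $(2t-1)!!\,2^{2t}(m+1)^t$. This reduction is specific to the top order and does not propagate to $k<t$, where the single-arrow summands in Q2 remain among the generators being counted and so cannot be used to eliminate a sign-direction factor cleanly.

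I would then extract a lower bound on the number of independent Q3 relations. A Q3 relation at order $k$ is specified by a triple of arrows with labels satisfying one of $i>j=k$, $j>i=k$, $k>i=j$, or $i=j=k=m+1$, together with $k-3$ spectator arrows of labels at most $m$. Fixing the parameter $j$, the factor $\binom{2(k-j)+1}{j}$ counts insertion positions of $j$ spectators into the three-arrow base, $(2(k-j)-1)!!$ counts the Gauss structure on the remaining arrows, $2^{2k}$ (reduced to $2^t$ at top order by Q2) counts the sign-direction data, and $m^{k-j}$ counts label choices of the spectators. Summing over $1 \le j \le \lfloor(2k+1)/3\rfloor$ and $k$ yields precisely the two subtracted sums in $\Omega_t[m]$. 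The main obstacle will be verifying that these Q3 relations are genuinely linearly independent, since two Q3 moves sharing a spectator pattern, or differing only by a permutation of $\{i,j,k\}$ among the four allowed orderings, can coincide and be double-counted. I would handle this by fixing a canonical naming of the triple (for instance, always declaring the strictly dominant label to be $i$), which is exactly the role of the truncation $\lfloor(2k+1)/3\rfloor$. Because $\Omega_t[m]$ is advertised as a \emph{crude} bound, any residual overcount of relations is absorbed into the slack and does not affect the stated upper estimate on $\rho_t[m]$.
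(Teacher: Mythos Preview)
Your reading of the subtracted sums is the wrong one, and this is a genuine gap. The paper does not subtract Q3 relations at all; it subtracts \emph{generators that are already zero} by $\vec{\text{Q1}}[m]$. Concretely: fix $k$ and $j$ with $j\le 2(k-j)+1$ (equivalently $j\le\lfloor(2k+1)/3\rfloor$). Take any signed, directed diagram on $k-j$ arrows with labels in $\{1,\ldots,m\}$; there are $(2(k-j)-1)!!\cdot 2^{2(k-j)}\cdot m^{k-j}$ of these. Such a diagram has $2(k-j)+1$ gaps along $\mathbb{R}$; choose $j$ of them and drop in an isolated arrow labelled $m+1$, with any sign and direction, contributing $\binom{2(k-j)+1}{j}\cdot 2^{2j}$. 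The product is exactly $\binom{2(k-j)+1}{j}(2(k-j)-1)!!\,2^{2k}m^{k-j}$, and every diagram so produced vanishes by $\vec{\text{Q1}}[m]$. Since these are distinct elements of the generating set that equal zero in the quotient, removing them still leaves a generating set, and its cardinality is an upper bound for the rank. No linear-independence check is needed.

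Your Q3 interpretation does not match the combinatorics: a fixed three-arrow ``base'' has $7$ gaps, not $2(k-j)+1$, and if the spectators numbered $j$ their label count would scale as $m^j$, not $m^{k-j}$. More importantly, your closing sentence has the inequality the wrong way round. In a generators-minus-relations estimate, overcounting independent relations makes the difference \emph{smaller} than the true rank; any residual double-counting would destroy the upper bound, not be absorbed by slack. The paper sidesteps this entirely because it never subtracts relations---only generators already known to vanish. One smaller correction: at the top order $k=t$, it is the \emph{two}-arrow summand of $\vec{\text{Q2}}[m]$ that has $t+1$ arrows and is killed by $\vec{A}_t[m]$; the surviving relation $D_\oplus+D_\ominus=0$ between the two single-arrow summands is what lets one drop signs and pass from $2^{2t}$ to $2^t$.
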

\begin{proof} Note first that the chord diagrams on $\mathbb{R}$ having $k$ chords are in one-to-one correspondence with order 2 permutations on $2k$ letters which have no fixed points. As is well known, the number of such permutations is $(2k-1)!!$. The arrows may be signed and directed in $2^{2k}$ ways. If $k=t$, we may ignore the signs of the arrows.  

Note also that each chord diagram can be labelled in ${(m+1)}^k$ ways. When $k=1$, only $m$ of them do not vanish under a $\vec{Q1}[m]$ relation.

Finally suppose that you have chosen a chord diagram with $(k-j)$ chords where $j \le (2k+1)/3$. Also suppose that this diagram is signed, directed, and labelled by numbers between $1$ and $m$.  For this diagram, choose $j$ of the $2(k-j)+1$ intervals between the endpoints of the chords. In each of the $j$ intervals, we insert an arrow with adjacent endpoints and label it $m+1$ (see Figure \ref{irrelarrows}).  Each such diagram is trivial in $\vec{\mathscr{X}}_t[m]$, and hence, it does not contribute to the rank. Moreover, all of the diagrams constructed in this was are distinct. 

Accounting for all such unnecessary diagrams gives the formula for $\Omega_t[m]$ exactly as above. 
\end{proof}

\begin{figure}[h]
\scalebox{.8}{\psfig{figure=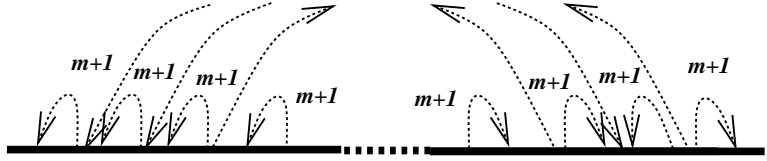}}
\caption{Different ways to add isolated arrows labelled $m+1$.} \label{irrelarrows}
\end{figure}

\subsection{Arrow Flipping and Chord Diagrams}\label{virtmodel} In the next four sections, we consider the action on $\vec{\mathscr{X}}_t[m]$ by arrow flipping and use it to find a lower bound on the rank of $\vec{\mathscr{X}}_t[m]$. Let $\overline{\mathscr{A}}[m]$ denote the set of signed dashed chord diagrams with labels from $1$ to $m+1$. We define the \emph{average map}, $\mu[m]:\mathbb{Z}[\overline{\mathscr{A}}[m]] \to \mathbb{Z}[\vec{\mathscr{A}}[m]]$ schematically as in \cite{Pol}:
\[
\mu[m] \left(\begin{array}{c} \scalebox{.4}{\psfig{figure=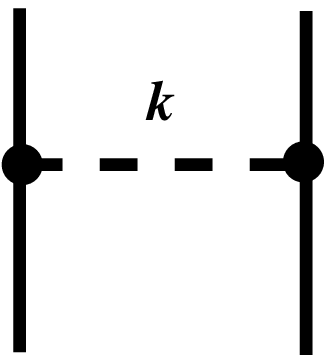}} \end{array} \right)= \begin{array}{c} \scalebox{.4}{\psfig{figure=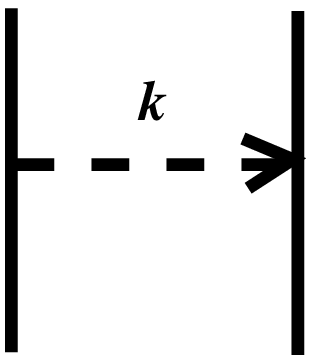}} \end{array}+\begin{array}{c} \scalebox{.4}{\psfig{figure=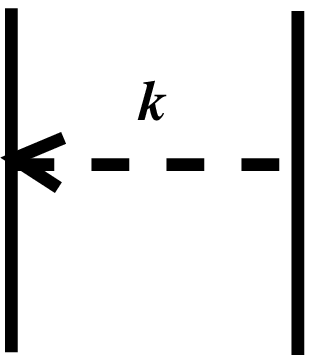}} \end{array}.
\]
The right hand side is a sum over all possible ways there are to direct the chords.  If $D$ has $t$ chords, then $\mu[m](D)$ is a sum of $2^t$ diagrams.  Also note that $\mu[m]$ preserves the sign and labels of each chord on the corresponding arrow. An important property of $\mu[m]$ is that it maps a diagram with $t$ chords to a  sum of diagrams having $t$ arrows each.

Given $\vec{D} \in \vec{\mathscr{A}}[m]$, denote by $\overline{D}$ the dashed chord diagram obtained from $\vec{D}$ by erasing all arrowheads.  This gives a map $\text{Bar}[m]:\vec{\mathscr{A}}[m] \to \overline{\mathscr{A}}[m]$. For a dashed signed chord diagram $D \in \overline{\mathscr{A}}[m]$ having $n$ chords, we have:
\[
\text{Bar}[m] \circ \mu[m](D)= 2^n \cdot D.
\]
It is important to note that both $\text{Bar}[m]$ and $\mu[m]$ preserve the number of arrows or chords of a diagram.

We define some relations on $\overline{\mathscr{A}}[m]$ using the map $\text{Bar}[m]:\mathbb{Z}[\vec{\mathscr{A}}[m]] \to \mathbb{Z}[\bar{\mathscr{A}}[m]]$ as follows:
\begin{eqnarray*}
\overline{\text{Q1}}[m] &=& \text{Bar}(\vec{\text{Q1}}[m]), \\
\overline{\text{Q2}}[m] &=& \text{Bar}(\vec{\text{Q2}}[m]), \\
\overline{\text{Q3}}[m] &=& \text{Bar}(\vec{\text{Q3}}[m]). \\
\end{eqnarray*}
Let $\overline{A}_t[m]=\text{Bar}[m](\vec{A}_t[m])$.  We define groups:
\[
\overline{\mathscr{X}}_t[m]=\frac{\mathbb{Z}[\overline{\mathscr{A}}[m]]}
{\left<\overline{\text{Q1}}[m],\overline{\text{Q2}}[m],\overline{\text{Q3}}[m],\overline{A}_t[m]\right> }.
\] 

The significance of this group can be described in terms of \emph{arrow flipping}. We will say that two diagrams $\vec{D}_1$ and $\vec{D}_2$ are equivalent by arrow flipping if $\vec{D}_2$ can be obtained from $\vec{D}_1$ by changing the direction of zero or more arrows of $\vec{D}_1$ (see Figure \ref{virtmv}). We will denote the resulting equivalence relation by $\sim$.
\begin{figure}[h]
\[
\begin{array}{cc}
\begin{array}{c} \scalebox{.25}{\psfig{figure=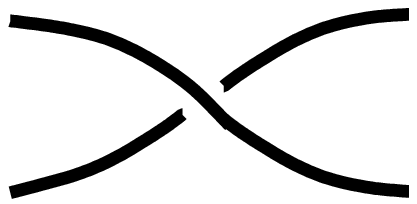}} \end{array} \leftrightarrow \begin{array}{c} \scalebox{.25}{\psfig{figure=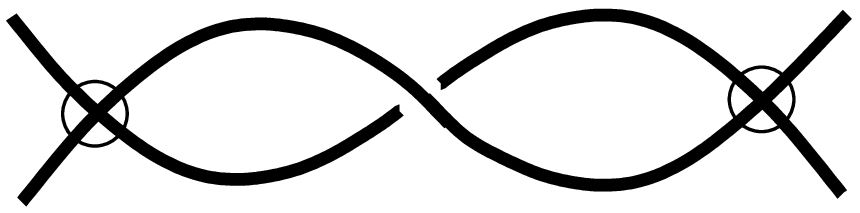}}
 \end{array},
&
\begin{array}{c} \scalebox{.25}{\psfig{figure=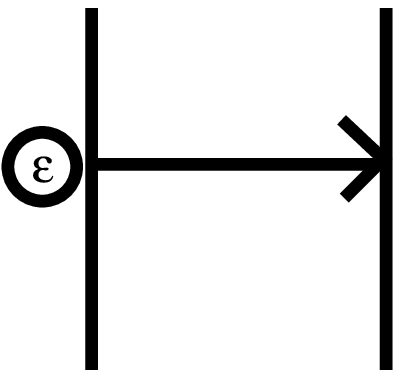}} \end{array} \leftrightarrow \begin{array}{c} \scalebox{.25}{\psfig{figure=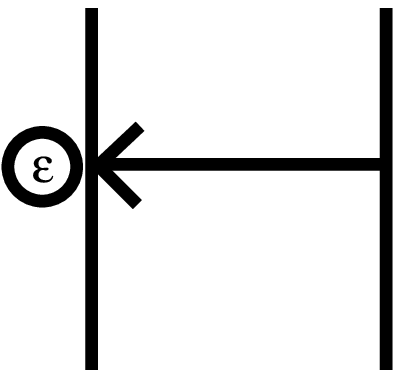}} \end{array}
\end{array}
\]
\caption{The virtualization move, arrow flipping.} \label{virtmv}
\end{figure}
\begin{lemma}[Arrow Flipping] The quotient of $\vec{\mathscr{X}}_t[m]$ by the action of arrow flipping (diagrams in $\mathscr{A}[m]$) is isomorphic to $\overline{\mathscr{X}}_t[m]$.
\end{lemma}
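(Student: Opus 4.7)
The plan is to identify $\vec{\mathscr{X}}_t[m]/\sim$ with $\overline{\mathscr{X}}_t[m]$ via a ``quotient of a quotient'' argument mediated by $\text{Bar}[m]$. I will write $F = \mathbb{Z}[\vec{\mathscr{A}}[m]]$, let $R \subseteq F$ denote the subgroup generated by all instances of $\vec{\text{Q1}}[m]$, $\vec{\text{Q2}}[m]$, $\vec{\text{Q3}}[m]$ together with $\vec{A}_t[m]$, so that $\vec{\mathscr{X}}_t[m] = F/R$, and let $S \subseteq F$ be the subgroup generated by differences $\vec{D}_1 - \vec{D}_2$ with $\vec{D}_1 \sim \vec{D}_2$. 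By the very definition of the quotient of $\vec{\mathscr{X}}_t[m]$ by the arrow-flipping action, the target group is $F/(R+S)$.

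The first step will be to show that $\text{Bar}[m]:F \to \mathbb{Z}[\overline{\mathscr{A}}[m]]$ is surjective with kernel exactly $S$. Since $\text{Bar}[m]$ only erases arrowheads, two directed diagrams share an image precisely when they differ by arrow flipping; the inclusion $S \subseteq \ker(\text{Bar}[m])$ is then immediate, and the reverse will follow by decomposing any element of the kernel according to its preimage fibres over each chord diagram and expressing each fibre (whose coefficients must sum to zero) as a $\mathbb{Z}$-combination of differences of arrow-flip equivalent diagrams. The First Isomorphism Theorem then gives $F/S \cong \mathbb{Z}[\overline{\mathscr{A}}[m]]$.

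The second step will be to verify $\text{Bar}[m](R) = \overline{R}$, where $\overline{R} = \langle \overline{\text{Q1}}[m], \overline{\text{Q2}}[m], \overline{\text{Q3}}[m], \overline{A}_t[m]\rangle$. This should be essentially tautological, since the bar relations are \emph{defined} to be the $\text{Bar}[m]$-images of the vec relations and $\text{Bar}[m]$ is $\mathbb{Z}$-linear. Combining the two steps via the Third Isomorphism Theorem will then yield
\[
\vec{\mathscr{X}}_t[m]/\sim \;=\; F/(R+S) \;\cong\; (F/S)\big/\big((R+S)/S\big) \;\cong\; \mathbb{Z}[\overline{\mathscr{A}}[m]]/\overline{R} \;=\; \overline{\mathscr{X}}_t[m],
\]
with the composite isomorphism induced by $\text{Bar}[m]$.

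The argument is essentially categorical, and the only technical point will be the careful bookkeeping that every instance of a vec relation (across all permissible sign and label choices, e.g.\ the several cases $i > j = k$, $j > i = k$, etc.\ permitted in $\vec{\text{Q3}}[m]$) maps to an instance of the corresponding bar relation, and that all bar generators arise in this way. I expect no real obstacle here, but it is worth recording explicitly to ensure that neither spurious relations are introduced nor genuine relations lost when passing to the quotient.
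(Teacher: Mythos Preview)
Your proposal is correct and follows essentially the same approach as the paper: the paper's proof is the single sentence ``The fibers of the surjection $\text{Bar}[m]:\vec{\mathscr{X}}_t[m] \to \overline{\mathscr{X}}_t[m]$ are the equivalence classes of $\sim$,'' and your argument via the First and Third Isomorphism Theorems is precisely the careful unpacking of that assertion. Your explicit identification of the kernel of $\text{Bar}[m]$ on the free level as $S$, and the observation that $\text{Bar}[m](R)=\overline{R}$ holds by definition of the barred relations, are exactly the two ingredients implicit in the paper's one-line proof.
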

\begin{proof} The fibers of the surjection $\text{Bar}[m]:\vec{\mathscr{X}}_t[m] \to \overline{\mathscr{X}}_t[m]$ are the equivalence classes of $\sim$.
\end{proof}

When $P$ is a parity of flat virtual knots, the dual spaces of the groups $\overline{\mathscr{X}}_t[m]$ yield virtual knot invariants which are invariant under the virtualization move (see Figure \ref{virtmv}). This is the content of the next theorem. 

\begin{theorem} Let $P$ be a parity of flat virtual knots.  If $v \in \text{Hom}_{\mathbb{Z}}(\overline{\mathscr{X}}_t[m],\mathbb{Q})$, then $v \circ \text{Bar}[m] \circ I[m] \circ \Lambda[m]:\mathbb{Z}[\mathscr{D}] \to \mathbb{Q}$ is a Kauffman finite-type invariant of order $\le t$ which is invariant under the virtualization move.
\end{theorem}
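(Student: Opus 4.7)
The overall strategy is to factor the composition through the directed lattice group $\vec{\mathscr{X}}_t[m]$, invoke Lemma \ref{kauffft} (Property \ref{prop1}) for the finite-type conclusion, and then exploit the ``flat parity'' hypothesis together with the fact that $\text{Bar}[m]$ forgets arrow directions to deduce invariance under virtualization. Concretely, I will first check that $\text{Bar}[m]$ descends to a well-defined surjection $\vec{\mathscr{X}}_t[m] \twoheadrightarrow \overline{\mathscr{X}}_t[m]$, so that pulling back $v$ produces an element of $\text{Hom}_{\mathbb{Z}}(\vec{\mathscr{X}}_t[m],\mathbb{Q})$.

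For the first step, recall that by construction $\overline{\text{Q}i}[m] = \text{Bar}[m](\vec{\text{Q}i}[m])$ for $i=1,2,3$ and $\overline{A}_t[m] = \text{Bar}[m](\vec{A}_t[m])$. Hence the free module map $\text{Bar}[m]:\mathbb{Z}[\vec{\mathscr{A}}[m]] \to \mathbb{Z}[\overline{\mathscr{A}}[m]]$ carries the defining relations of $\vec{\mathscr{X}}_t[m]$ into the defining relations of $\overline{\mathscr{X}}_t[m]$, and therefore induces a well-defined homomorphism $\overline{\text{Bar}}[m]:\vec{\mathscr{X}}_t[m] \to \overline{\mathscr{X}}_t[m]$. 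Consequently $v \circ \overline{\text{Bar}}[m] \in \text{Hom}_{\mathbb{Z}}(\vec{\mathscr{X}}_t[m],\mathbb{Q})$, and Lemma \ref{kauffft} immediately gives that $v \circ \text{Bar}[m] \circ I[m] \circ \Lambda[m]$ is a Kauffman finite-type invariant of order $\le t$.

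The substantive content is invariance under the virtualization move, which on Gauss diagrams is precisely arrow flipping. Let $D,D' \in \mathscr{D}$ differ by flipping the direction of a single arrow $x$. Since $P$ is a parity of flat virtual knots, $P$ assigns the same label to $x$ and to each corresponding unaffected arrow in $D'$. I will argue inductively that the same is then true of $f^i(D)$ and $f^i(D')$ for every $i \ge 1$: deletion of odd arrows is determined by the parities, so $f(D)$ and $f(D')$ are themselves related by arrow flipping (or are identical, if $x$ was deleted), and the flat-parity hypothesis applies again at the next stage. This shows that $\Lambda[m](D)$ and $\Lambda[m](D')$ coincide as labelled signed \emph{dashed} diagrams in $\vec{\mathscr{A}}[m]$ modulo flipping the direction of $x$, and the same is therefore true of every subdiagram appearing in $I[m]\circ \Lambda[m](D)$ and $I[m]\circ \Lambda[m](D')$. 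Applying $\text{Bar}[m]$ erases the arrowhead on $x$, so the two images agree in $\mathbb{Z}[\overline{\mathscr{A}}[m]]$, and $v$ then returns the same rational value.

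The only real subtlety is the inductive compatibility of arrow flipping with the iterates $f^i$, but this is essentially the observation already recorded in the background section (that $f$ is well-defined on Reidemeister-equivalent diagrams) specialised to the trivial ``change of arrow direction'' move under a flat parity; once stated cleanly, it follows directly from the parity axioms. All remaining steps are formal consequences of the previously established machinery.
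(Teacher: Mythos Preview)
Your proposal is correct and follows essentially the same route as the paper: factor through the surjection $\text{Bar}[m]:\vec{\mathscr{X}}_t[m]\to\overline{\mathscr{X}}_t[m]$, invoke Lemma~\ref{kauffft} for the finite-type bound, and use the flat-parity hypothesis to see that $\Lambda[m]$ is unaffected by arrow flipping so that $\text{Bar}[m]$ kills the remaining directional discrepancy. If anything, you supply more detail than the paper does, in particular the explicit inductive check that each iterate $f^i$ commutes with arrow flipping and the verification that $\text{Bar}[m]$ respects the defining relations.
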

\begin{proof} Note that $\text{Bar}:\vec{\mathscr{X}}_t[m] \to \overline{\mathscr{X}}_t[m]$ is a surjection. Hence, $v$ can be identified as an element $\vec{v}[m] \in \text{Hom}_{\mathbb{Z}}(\vec{\mathscr{X}}_t[m],\mathbb{Q})$.  In particular, we have $v\circ \text{Bar}[m]=\vec{v}[m]$.  For any $D,D' \in \vec{\mathscr{A}}[m]$ which differ by the direction of some arrows, we have that $\text{Bar}[m](D)=\text{Bar}[m](D')$.  Hence,
\[
\vec{v}[m](D)=v \circ \text{Bar}[m](D)=v \circ \text{Bar}[m](D')=\vec{v}[m](D').
\]
Let $E$ be a Gauss diagram of a virtual knot. Since $P$ is a parity of flat virtual knots, $f$ assigns the same label to every diagram equivalent to $E$ by changing the direction of an arrow. This proves the theorem by definition of the virtualization move.
\end{proof}

It follows from Theorem \ref{thetacomb} that the invariants $\theta_{t_1,\ldots,t_s}[m|k_1,\ldots,k_s]$ are in the image of $\text{Bar}^*[m]$, the dual of the surjection $\vec{\mathscr{X}}_t[m]\to\overline{\mathscr{X}}_t[m]\to 0$. Indeed, the combinatorial formula is unchanged by changing the direction of any arrow.

\subsection{Algebraic Structure of $\vec{\mathscr{X}}_t[m]$ and $\overline{\mathscr{X}}_t[m]$} In this section, we investigate the relations between the sequence of groups $\vec{\mathscr{X}}_t[m]$ and $\overline{\mathscr{X}}_t[m]$ in $m$ and $t$. We establish Lemma \ref{sixtermmap} which allows the rank of $\vec{\mathscr{X}}_t[m]$ to be underestimated. We write $\mathscr{X}_t[m]$ for $\vec{\mathscr{X}}_t[m]$ or $\overline{\mathscr{X}}_t[m]$. We determine the structure of these groups simultaneously. First, we have the following sequence of surjections:
\[
\xymatrix{
\ldots \ar[r] & \mathscr{X}_t[m] \ar[r] & \mathscr{X}_{t-1}[m] \ar[r] & \ldots \ar[r] & \mathscr{X}_2[m] \ar[r] & \mathscr{X}_1[m]
}.
\] 
Denote the kernel of the surjection by $\text{Ker}_t[m]$. This gives a short exact sequence:
\[
\xymatrix{
0 \ar[r] & \text{Ker}_t[m] \ar[r] & \mathscr{X}_t[m] \ar[r] & \mathscr{X}_{t-1}[m] \ar[r] & 0 
}.
\]
To understand this quotient, we introduce the $f^m$-labelled versions of the six term, one term, and sign relations for signed arrow diagrams.
\[
\underline{\vec{\text{1T}}_{\pm t}[m]:} \,\, \begin{array}{c} \scalebox{.33}{\psfig{figure=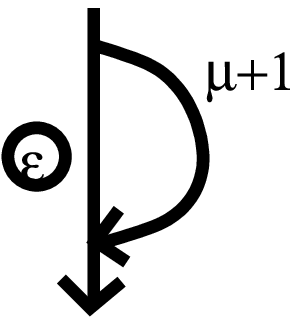}} \end{array}=0, \,\,\,\, \underline{\vec{\text{NS}}_{\pm t}[m]:} \,\, \begin{array}{c} \scalebox{.33}{\psfig{figure=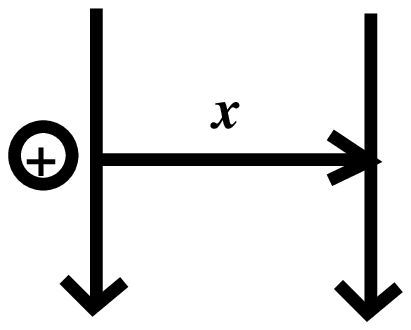}} \end{array}+\begin{array}{c} \scalebox{.33}{\psfig{figure=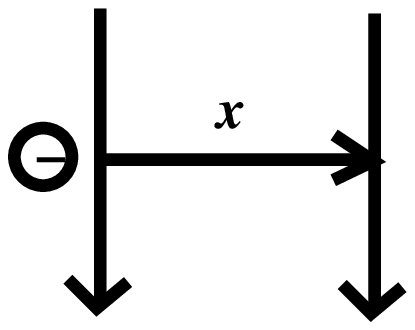}} \end{array}=0,
\]
\[
\underline{\vec{\text{6T}}_{\pm t}[m]:} \,\, \begin{array}{c} \scalebox{.33}{\psfig{figure=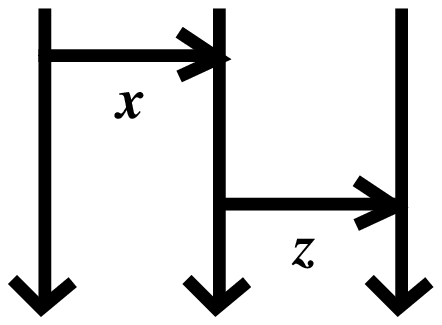}} \end{array}-\begin{array}{c} \scalebox{.33}{\psfig{figure=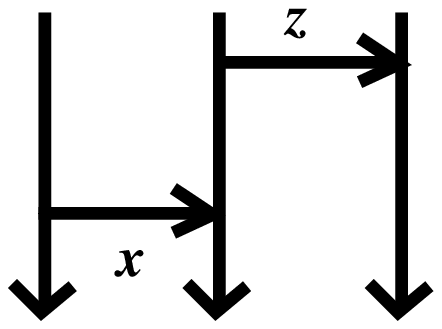}} \end{array}+ \begin{array}{c} \scalebox{.33}{\psfig{figure=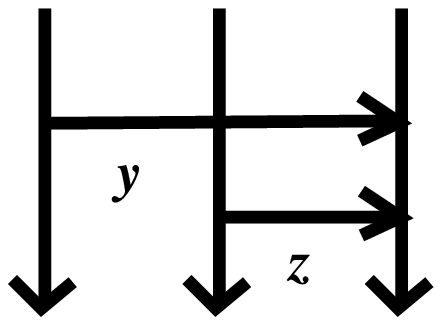}} \end{array}-\begin{array}{c} \scalebox{.33}{\psfig{figure=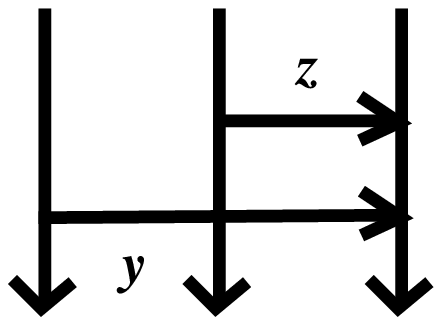}} \end{array} 
+\begin{array}{c} \scalebox{.33}{\psfig{figure=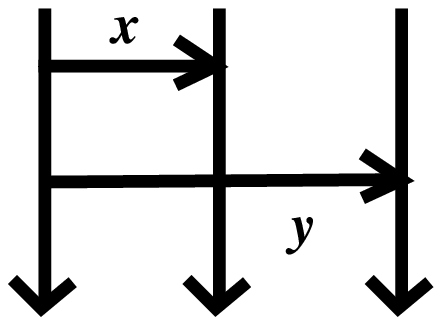}} \end{array}-\begin{array}{c} \scalebox{.33}{\psfig{figure=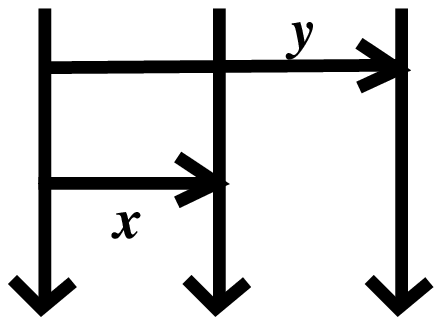}} \end{array}=0.
\]

In the $\vec{\text{6T}}_{\pm t}[m]$ relation, all drawn arrows have the same sign.  Also, it is required that either $x>y=z$, $y>x=z$, $z>x=y$ or $x=y=z=m+1$.

For each of these relations $\vec{R}$ there are also images of these relations under the map $\text{Bar}$.  These images are denoted by $\overline{R}$.  For example, $\overline{\text{6T}}_{\pm t}[m]=\text{Bar}(\vec{\text{6T}}_{\pm t}[m])$. If $R$ is written instead of $\vec{R}$ or $\overline{R}$, the given statement holds true in either case. For example, we write $\text{6T}_{\pm t}[m]$ for either $\vec{\text{6T}}_{\pm t}[m]$ or $\overline{\text{6T}}_{\pm t}[m]$ when the statement is true for either. Also, if instead of $\pm t$ we write $|t|$ in a relation, we mean that the signs of the arrows in the relation are to be erased.

\begin{lemma}\label{sixtermmap} Let $\mathscr{A}_{|t|}[m]$ ($=\vec{\mathscr{A}}_{|t|}[m]$ or $\overline{\mathscr{A}}_{|t|}[m]$) denote the free abelian group generated by those Gauss diagrams with labels up to $m+1$ having exactly $t$ unsigned arrows.  Then the following sequence is exact:
\[
\xymatrix{0 \ar[r] & \text{Hom}_{\mathbb{Z}}\left(\mathscr{X}_{t-1}[m] ,\mathbb{Q}\right) \ar[r] & \text{Hom}_{\mathbb{Z}}\left(\mathscr{X}_t[m] ,\mathbb{Q}\right) \ar[r] & \text{Hom}_{\mathbb{Z}}\left(\frac{\mathscr{A}_{|t|}[m]}{\left<\text{6T}_{|t|}[m],\text{1T}_{|t|}[m]\right>} ,\mathbb{Q}\right)}.
\]
\end{lemma}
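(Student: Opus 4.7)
The plan is to derive the claimed exact sequence from the short exact sequence
\[
0 \to \text{Ker}_t[m] \to \mathscr{X}_t[m] \xrightarrow{\pi_t[m]} \mathscr{X}_{t-1}[m] \to 0,
\]
where $\text{Ker}_t[m]$ denotes the kernel of the surjection produced by Lemma \ref{horsurj}. Since $\mathbb{Q}$ is an injective $\mathbb{Z}$-module, the functor $\text{Hom}_{\mathbb{Z}}(-,\mathbb{Q})$ is exact, which immediately gives injectivity of the leftmost map. To supply the rightmost map, I would construct a natural map $\psi:\mathscr{A}_{|t|}[m]\to \text{Ker}_t[m]\subseteq \mathscr{X}_t[m]$, show it factors through the quotient by $\text{6T}_{|t|}[m]$ and $\text{1T}_{|t|}[m]$, and establish that $\psi\otimes\mathbb{Q}$ is surjective. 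Exactness at the middle then reduces to the statement that $v\in\text{Hom}_{\mathbb{Z}}(\mathscr{X}_t[m],\mathbb{Q})$ vanishes on $\text{Ker}_t[m]$ if and only if $v\circ\psi=0$.

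To define $\psi$, I would send an unsigned $t$-arrow generator $D$ either to the signed diagram obtained by declaring every arrow positive (in the $\vec{\mathscr{A}}_{|t|}[m]$ case) or to $D$ itself (in the $\overline{\mathscr{A}}_{|t|}[m]$ case), viewed as an element of $\mathscr{X}_t[m]$. Because $\psi(D)$ has exactly $t$ arrows, it is annihilated in $\mathscr{X}_{t-1}[m]$ and so lies in $\text{Ker}_t[m]$. The $\text{1T}_{|t|}[m]$ relation descends from $\vec{\text{Q1}}[m]$ immediately, as an isolated arrow labelled $m+1$ kills the entire diagram. The $\text{6T}_{|t|}[m]$ relation is obtained by padding a $\vec{\text{Q3}}[m]$ identity with $t-3$ auxiliary arrows: the resulting identity includes a $t$-arrow part which, after absorbing the strictly sub-$t$-arrow terms via padded $\vec{\text{Q2}}[m]$ relations whose $(t+1)$-arrow contributions vanish in $\mathscr{X}_t[m]$, coincides with a $\text{6T}_{|t|}[m]$ relation carrying the same label-ordering constraints ($i>j=k$, $j>i=k$, $k>i=j$, or $i=j=k=m+1$).

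For surjectivity of $\psi\otimes\mathbb{Q}$, I would establish an NS-type sign identity on $\text{Ker}_t[m]$: flipping the sign of a single arrow in a $t$-arrow diagram negates the diagram. In the base configuration this follows from padding $\vec{\text{Q2}}[m]$ with $t-1$ fixed arrows, producing $D_+ + D_- + D_{\text{both}} = 0$; the third term has $t+1$ arrows and vanishes in $\mathscr{X}_t[m]$, giving $D_- = -D_+$. For arbitrary arrow positions one slides the affected arrow into an R2-adjacent configuration via padded $\vec{\text{Q3}}[m]$ relations, tensoring with $\mathbb{Q}$ if necessary. Consequently every signed $t$-arrow diagram equals $\pm$ an all-positive diagram modulo torsion, so $\text{Im}(\psi)\otimes\mathbb{Q}=\text{Ker}_t[m]\otimes\mathbb{Q}$, which is precisely what makes the dual map injective on $\text{Hom}_{\mathbb{Z}}(\text{Ker}_t[m],\mathbb{Q})$.

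The main obstacle is the algebraic reduction of padded $\vec{\text{Q3}}[m]$ to $\text{6T}_{|t|}[m]$. The raw padded identity contains many sub-$t$-arrow contributions coming from proper subdiagrams of the R3 configuration, and cleanly eliminating them requires iterated application of padded $\vec{\text{Q1}}[m]$ and $\vec{\text{Q2}}[m]$ combined with the label restrictions inherited from the iterates of the functorial map $f$. Verifying that the surviving $t$-arrow part coincides with $\text{6T}_{|t|}[m]$ in both the directed variant $\vec{\mathscr{X}}_t[m]$ and the undirected variant $\overline{\mathscr{X}}_t[m]$, and that the NS sliding argument remains compatible with the label constraints throughout, is the technical heart of the proof.
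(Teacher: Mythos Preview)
Your approach is essentially the one the paper defers to: the paper's own proof consists only of the sentence ``The idea is similar to that in \cite{Pol}. Full details of an analogous argument are in \cite{C2},'' and what you have outlined is precisely the Polyak argument adapted to the $f^m$-labelled setting. Dualizing the short exact sequence $0\to\text{Ker}_t[m]\to\mathscr{X}_t[m]\to\mathscr{X}_{t-1}[m]\to 0$ with the injective module $\mathbb{Q}$, identifying $\text{Ker}_t[m]$ with the span of $t$-arrow diagrams, deriving the $\text{NS}$ and $\text{6T}$ relations on that span by padding $\text{Q2}$ and $\text{Q3}$ so that the excess-arrow terms die in $\mathscr{X}_t[m]$, and then passing to unsigned diagrams --- this is exactly the scheme of \cite{Pol} and \cite{C2}, so your proposal and the paper's intended proof coincide.
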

\begin{proof} The idea is similar to that in \cite{Pol}.  Full details of an analogous argument are in \cite{C2}. 
\end{proof}

\subsection{$f^m$-Labelled Four Term and Six Term Relations}\label{foursix} \label{foursix} To estimate the rank, we require an additional relation known as the four-term relation (compare \cite{C2}).  In this section, we define the $f^m$-labelled version of the \text{4T} relation and investigate its properties.

In our case, $\overline{\text{4T}}_{|t|}[m] \in \overline{\mathscr{A}}_{|t|}[m]$ for a given $t$ and $m$. Pictorially, it is given as below.
\begin{eqnarray*}
\underline{\overline{\text{4T}}_{|t|}[m]:}\begin{array}{c} \scalebox{.33}{\psfig{figure=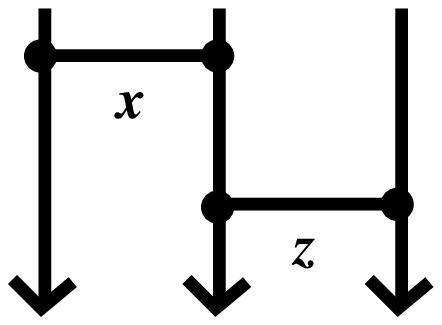}} \end{array}-\begin{array}{c} \scalebox{.33}{\psfig{figure=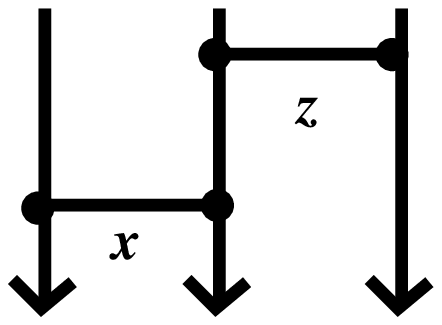}} \end{array} &=& \begin{array}{c} \scalebox{.33}{\psfig{figure=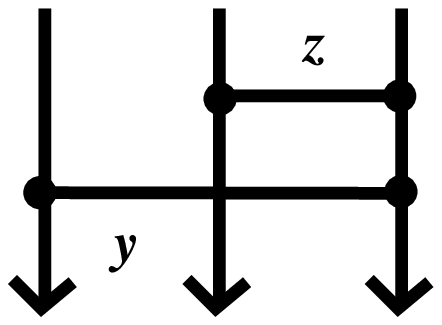}} \end{array}-\begin{array}{c} \scalebox{.33}{\psfig{figure=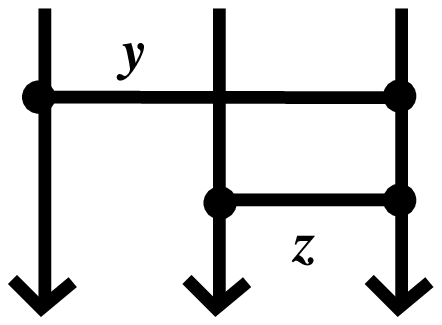}} \end{array}  \\
&=& \begin{array}{c} \scalebox{.33}{\psfig{figure=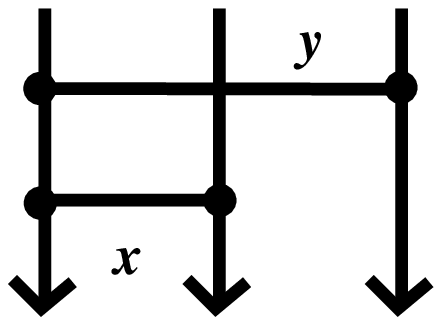}} \end{array}-\begin{array}{c} \scalebox{.33}{\psfig{figure=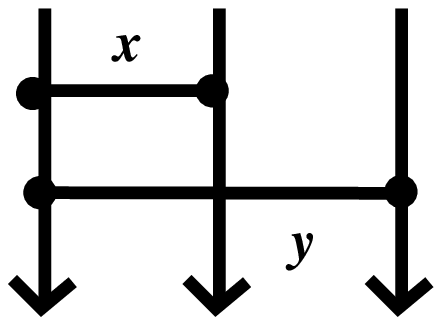}} \end{array}.
\end{eqnarray*} 
We require as usual that either $x>y=z$, $y>x=z$, $z>x=y$, or $x=y=z=m+1$.  For a fixed embedding of the three vertical strands into a chord diagram, we describe an $f^m$-labelled version of a notation originally due to Polyak \cite{Pol}. We denote by $\alpha_{ij}^x$ the undirected chord between strands $i$ and $j$ with label $x$.  Specific groupings of chords are denoted as follows:
\begin{eqnarray*}
\begin{array}{c} \scalebox{.33}{\psfig{figure=fourtern1.eps}} \end{array}-\begin{array}{c} \scalebox{.33}{\psfig{figure=fourtern2.eps}} \end{array} &=& [\alpha_{12}^x,\alpha_{23}^z], \\
\begin{array}{c} \scalebox{.33}{\psfig{figure=fourtern3.eps}} \end{array}-\begin{array}{c} \scalebox{.33}{\psfig{figure=fourtern4.eps}} \end{array} &=& [\alpha_{13}^y,\alpha_{23}^z], \\
\begin{array}{c} \scalebox{.33}{\psfig{figure=fourtern5.eps}} \end{array}-\begin{array}{c} \scalebox{.33}{\psfig{figure=fourtern6.eps}} \end{array} &=& [\alpha_{12}^x,\alpha_{13}^y]. \\
\end{eqnarray*}
If $[a,b]$ is one of the terms given immediately above, define $[b,a]=-[a,b]$.  We denote by $a_{ij}^x$ the \emph{arrow} directed from strand $i$ to strand $j$ having label $x$. Using this notation, we may write the $\vec{\text{6T}}_{|t|}[m]$ relation:
\[
\left[a_{12}^x, a_{23}^z \right]+\left[ a_{12}^x, a_{13}^y\right]+\left[a_{13}^y, a_{23}^z \right]=0.
\]
\begin{lemma} \label{brackrel}Let $\sigma \in \mathbb{S}_3$ denote a permutation of the three vertical intervals of a $\text{6T}_{|t|}[m]$ relation.  Then the following relation holds:
\[
\left[a_{\sigma(1) \sigma(2)}^x, a_{\sigma(2) \sigma(3)}^z \right]+\left[ a_{\sigma(1) \sigma(2)}^x, a_{\sigma(1) \sigma(3)}^y\right]+\left[a_{\sigma(1) \sigma(3)}^y, a_{\sigma(2) \sigma(3)}^z \right]=0.
\]
\end{lemma}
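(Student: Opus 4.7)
The plan is to reduce to the case where $\sigma$ is a transposition, since the transpositions $(12), (23), (13)$ generate $\mathbb{S}_3$. The essential point is that the $\vec{\text{6T}}_{|t|}[m]$ relation is a consequence of the $\vec{\text{Q3}}[m]$ (Reidemeister 3) relation on a three-strand configuration, and the $\vec{\text{Q3}}[m]$ relation is insensitive to how the three strands are labeled: the strand indices $1,2,3$ are purely a notational device. Consequently, the $\sigma$-relabeled 6T is itself an instance of 6T applied to the strands read in the order $\sigma(1),\sigma(2),\sigma(3)$, and it must vanish in $\vec{\mathscr{X}}_t[m]$ (and, via $\text{Bar}[m]$, in $\overline{\mathscr{X}}_t[m]$).

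To execute this rigorously for each transposition, I would expand both sides of the desired identity using the definition of the bracket $[\alpha_{ij}^x, \alpha_{kl}^y]$ as a signed difference of labelled Gauss diagrams. The resulting six-term expression is precisely the image, under the subdiagram map, of a $\vec{\text{Q3}}[m]$ relation configured on strands relabeled by $\sigma$. Invoking $\vec{\text{Q3}}[m]$ (respectively $\overline{\text{Q3}}[m]$) for that configuration shows that the sum of the six terms vanishes, which is exactly the permuted relation. The general $\sigma \in \mathbb{S}_3$ then follows by composing the transposition cases; since each transposition case is an equality of $0$ with a sum of six diagrams that are themselves permutations of the diagrams in the identity case, composition preserves the vanishing.

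The main obstacle is tracking arrow directions. Under a transposition $(ij)$, an arrow $a_{ij}^x$ becomes its reverse $a_{ji}^x$, with head and tail exchanged. This is harmless because $\vec{\text{Q3}}[m]$ holds for every configuration of arrow directions consistent with the label constraint $x > y = z$, $y > x = z$, $z > x = y$, or $x = y = z = m+1$; that constraint is symmetric in $\{x, y, z\}$ and therefore survives $\sigma$. Any sign discrepancies produced by the arrow reversals are absorbed into the bracket terms via the antisymmetry $[b, a] = -[a, b]$, and together with the label symmetry they cancel in the three-term sum. In the unoriented setting of $\overline{\mathscr{X}}_t[m]$, the concern does not arise at all, so the oriented case is the only one that requires this sign bookkeeping.
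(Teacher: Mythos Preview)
Your core idea is right: the strand labels $1,2,3$ are a notational device, so relabeling a 6T configuration produces another 6T relation, and the only real content is bookkeeping the bracket terms and signs under the relabeling. The paper does exactly this, but without any generator-and-compose strategy: it simply verifies one permutation ($\sigma=(23)$) explicitly by drawing the six diagrams, matching each pair to a directed bracket, and checking the result equals $[a_{\sigma(1)\sigma(2)}^x,a_{\sigma(2)\sigma(3)}^z]+[a_{\sigma(1)\sigma(2)}^x,a_{\sigma(1)\sigma(3)}^y]+[a_{\sigma(1)\sigma(3)}^y,a_{\sigma(2)\sigma(3)}^z]$ after using $[b,a]=-[a,b]$. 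It then asserts the other four cases follow similarly and lists the six resulting bracket identities.

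The gap in your proposal is the composition step. Knowing that the identity holds for each transposition $\tau$ is \emph{not} the same as knowing that $\tau$ carries the set of valid 6T relations to itself. The statement for $\tau$ says one particular six-term sum vanishes; it does not furnish a rule for transporting an arbitrary already-verified relation along $\tau$. To make ``compose'' rigorous you would need to show that applying $\tau$ to the diagrams of an \emph{arbitrary} permuted 6T relation again yields a permuted 6T relation --- but that is precisely the general-$\sigma$ statement, so the induction buys nothing over a direct check. Concretely: from $R_{(12)}=0$ and $R_{(23)}=0$ there is no algebraic identity producing $R_{(123)}=0$ without re-invoking the underlying 6T relation on the reordered strands. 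Since $|\mathbb{S}_3|=6$, the savings from generators (three transpositions versus five nontrivial permutations) is marginal anyway, and the direct case-by-case verification the paper uses is both shorter and avoids this circularity.
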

\begin{proof} The relation is true when $\sigma=1$.  Consider the permutation $\sigma=( 2 \, 3)$ which is written in cycle notation. This corresponds to fixing the first string and interchanging the second and third strings. In terms of a diagram, this can be written as follows:
\[
\begin{array}{c} \scalebox{.33}{\psfig{figure=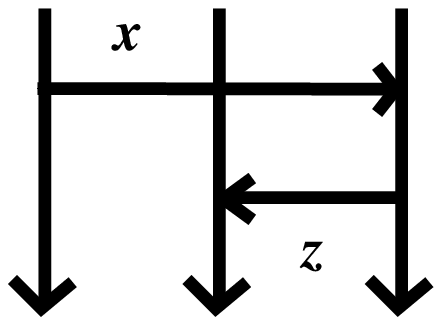}} \end{array}-\begin{array}{c} \scalebox{.33}{\psfig{figure=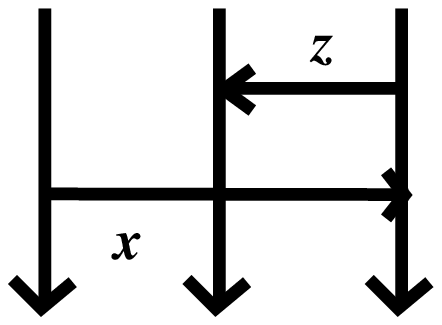}} \end{array}+ \begin{array}{c} \scalebox{.33}{\psfig{figure=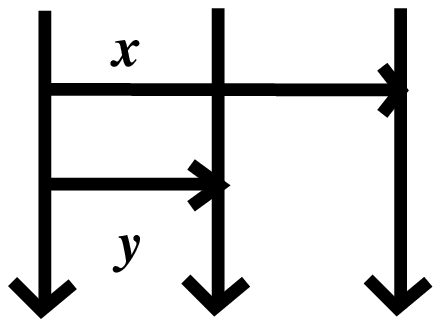}} \end{array}-\begin{array}{c} \scalebox{.33}{\psfig{figure=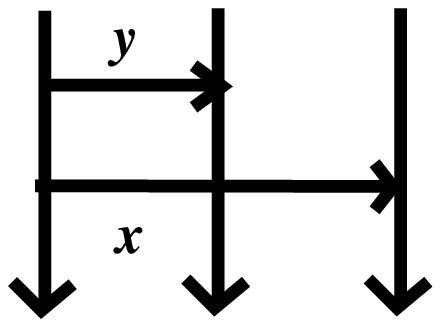}} \end{array} 
+\begin{array}{c} \scalebox{.33}{\psfig{figure=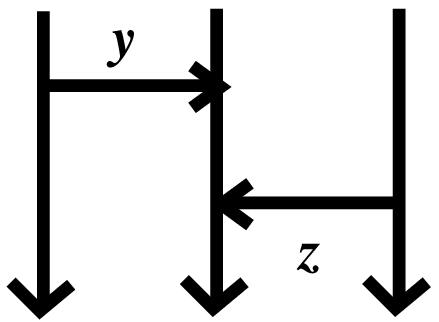}} \end{array}-\begin{array}{c} \scalebox{.33}{\psfig{figure=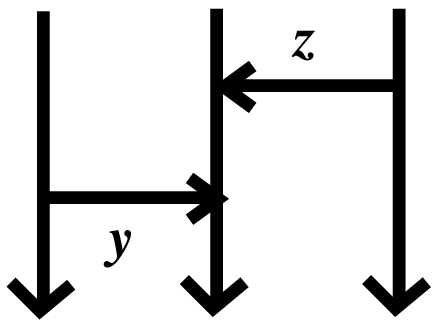}} \end{array}=0.
\]
Each of the three pairs $D-E$ in the above diagram matches one of the terms in the directed bracket notation. It is then simply a matter of writing out the relation to check that it works:
\begin{eqnarray*}
\left[a_{13}^x, a_{32}^z \right]-\left[ a_{12}^y, a_{13}^x\right]+\left[a_{12}^y, a_{32}^z \right] &=& \left[a_{13}^x, a_{32}^z \right]+\left[ a_{13}^x, a_{12}^y\right]+\left[a_{12}^y, a_{32}^z \right] \\ 
&=& \left[a_{\sigma(1) \sigma(2)}^x, a_{\sigma(2) \sigma(3)}^z \right]+\left[ a_{\sigma(1) \sigma(2)}^x, a_{\sigma(1) \sigma(3)}^y\right]+\left[a_{\sigma(1) \sigma(3)}^y, a_{\sigma(2) \sigma(3)}^z \right].\\ 
\end{eqnarray*}
The other four cases follow similarly.  For the reader's convenience, the bracket notation for all six relations are given together below:
\begin{eqnarray} 
\label{brackrel1}
\sigma=(1)(2)(3), \,\, \left[a_{12}^x, a_{23}^z \right]+\left[ a_{12}^x, a_{13}^y\right]+\left[a_{13}^y, a_{23}^z \right] &=& 0, \\\label{brackrel2}
\sigma=(23), \,\, \left[a_{13}^x, a_{32}^z \right]-\left[ a_{12}^y, a_{13}^x\right]+\left[a_{12}^y, a_{32}^z \right] &=& 0, \\ \label{brackrel3}
\sigma=(123), \,\,-\left[a_{31}^z, a_{23}^x \right]-\left[ a_{21}^y, a_{23}^x\right]+\left[a_{21}^y, a_{31}^z \right] &=& 0, \\ \label{brackrel4}
\sigma=(13), \,\,-\left[a_{21}^z, a_{32}^x \right]-\left[ a_{31}^y, a_{23}^x\right]-\left[a_{21}^z, a_{31}^y \right] &=& 0, \\ \label{brackrel5}
\sigma=(12), \,\,\left[a_{21}^x, a_{13}^z \right]+\left[ a_{21}^x, a_{23}^y\right]-\left[a_{13}^z, a_{23}^y \right] &=& 0, \\ \label{brackrel6}
\sigma=(321), \,\,-\left[a_{12}^z, a_{31}^x \right]+\left[ a_{31}^x, a_{32}^y\right]-\left[a_{12}^z, a_{32}^y \right] &=& 0.
\end{eqnarray}
This completes the proof of the lemma. 
\end{proof}
\begin{lemma} \label{switchrel} The following relations hold in $\vec{\mathscr{A}}_{|t|}[m]/\left< \vec{\text{6T}}_{|t|}[m],\vec{\text{1T}}_{|t|}[m] \right>$, where either $x>y=z$, $y>x=z$, $z>x=y$, or $x=y=z=m+1$.
\begin{enumerate}
\item $\left[a_{12}^x, a_{23}^z\right]+\left[a_{12}^x, a_{32}^z\right]=\left[a_{32}^z,  a_{13}^y\right]+\left[a_{23}^z,a_{13}^y\right]$
\item $\left[a_{12}^x, a_{23}^z\right]+\left[a_{21}^x, a_{23}^z\right]=\left[a_{13}^y,  a_{12}^x\right]+\left[a_{13}^y, a_{21}^x\right]$
\item $\left[a_{21}^y, a_{32}^x\right]+\left[a_{21}^y , a_{23}^x\right]=\left[a_{32}^x,  a_{31}^z\right]+\left[a_{23}^x,a_{31}^z\right]$
\item $\left[a_{21}^z, a_{32}^x\right]+\left[a_{12}^z, a_{32}^x\right]=\left[a_{31}^y,  a_{21}^z\right]+\left[a_{31}^y,a_{12}^z\right]$
\end{enumerate}
\end{lemma}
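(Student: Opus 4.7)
The plan is to derive each of the four identities as a sum of two instances of the $\vec{\text{6T}}_{|t|}[m]$ relations given by Lemma~\ref{brackrel}, possibly after a single transposition of labels. After moving all terms to one side and applying the antisymmetry $[u,v]=-[v,u]$, each identity becomes a four-bracket relation of the shape $[a,c_1]+[b,c_1]+[a,c_2]+[b,c_2]=0$, where $a,b$ are two ``outer'' arrows and $c_1,c_2$ are two ``inner'' arrows that share a chord but point in opposite directions. Applying $\vec{\text{6T}}_{|t|}[m]$ to the triples $(a,b,c_1)$ and $(a,b,c_2)$ and summing the two equations, the two occurrences of the shared bracket $[a,b]$ cancel, leaving exactly the four brackets above.

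To implement this, one identifies for each triple $(a,b,c_i)$ the permutation $\sigma\in\mathbb{S}_3$ from Lemma~\ref{brackrel} whose chord direction pattern (forward/backward on chords $12$, $13$, $23$) matches the pattern of the triple. For (1) the pair is $\sigma=\mathrm{id}$ and $\sigma=(2\,3)$; for (2), $\sigma=\mathrm{id}$ and $\sigma=(1\,2)$; for (3), $\sigma=(1\,3)$ and $\sigma=(1\,2\,3)$; for (4), $\sigma=(1\,3)$ and $\sigma=(3\,2\,1)$. In each of these four cases, one of the chosen $\sigma$-relations already distributes the labels $x,y,z$ onto chords in the manner required by Lemma~\ref{switchrel}, while the other requires a single transposition of two of the symbols (either $x\leftrightarrow y$ or $y\leftrightarrow z$) to align. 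This transposition is legitimate because the admissibility condition ``either $x>y=z$, $y>x=z$, $z>x=y$, or $x=y=z=m+1$'' is manifestly invariant under every permutation of $\{x,y,z\}$, so the transposed identity is again an instance of $\vec{\text{6T}}_{|t|}[m]$.

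The remaining work is routine: in each of the four cases, add the two (possibly relabelled) $\vec{\text{6T}}_{|t|}[m]$ relations, cancel the common bracket by antisymmetry, and rearrange the four surviving terms into the form stated in Lemma~\ref{switchrel}. The $\vec{\text{1T}}_{|t|}[m]$ relations are not invoked. The one place where errors are easy to make is the sign bookkeeping across the several antisymmetry moves; I would handle the four cases in parallel through the uniform template ``a triple $(a,b,c)$ plus the same triple with $c$ reversed,'' and only specialize the direction pattern and the label transposition at the end.
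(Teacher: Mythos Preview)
Your proposal is correct and is essentially the same argument as the paper's: the paper proves relation (1) by adding the $\sigma=\mathrm{id}$ and $\sigma=(2\,3)$ instances of Lemma~\ref{brackrel} (after the label substitution $i=y$, $j=x$, $k=z$, i.e.\ the transposition $x\leftrightarrow y$), observes that the shared bracket cancels, and then says the remaining three identities follow similarly. Your explicit pairing of permutations for each of (1)--(4), together with the observation that the admissibility condition on $x,y,z$ is symmetric so the needed label transposition is harmless, is exactly the ``similarly'' the paper leaves to the reader.
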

\begin{proof} We prove the first relation only.  We write out relations (2) and (3) from the proof of Lemma \ref{brackrel} using distinct indices.
\begin{eqnarray*}
\left[a_{12}^x, a_{23}^z \right]+\left[ a_{12}^x, a_{13}^y\right]+\left[a_{13}^y, a_{23}^z \right] &=& 0, \\
\left[a_{13}^i, a_{32}^k \right]-\left[ a_{12}^j, a_{13}^i\right]+\left[a_{12}^j, a_{32}^k \right] &=& 0.
\end{eqnarray*}
Set $j=x$, $i=y$, $k=z$.  The middle terms cancel out when they are added together.  The resulting expression can be rearranged using the identity $[b,a]=-[a,b]$ to obtain the first relation above.
\end{proof}
The following lemma is the $f^m$-labelled version of a theorem of Polyak (see \cite{Pol}).
\begin{lemma} The average map $\mu[m]$ satisfies the following properties.
\begin{enumerate}
\item $\mu[m]\left(\left< \overline{\text{1T}}_{|t|}[m] \right> \right) \subset \left< \vec{\text{1T}}_{|t|}[m] \right>$
\item $\mu[m]\left(\left< \overline{\text{4T}}_{|t|}[m] \right> \right) \subset \left< \vec{\text{6T}}_{|t|}[m] \right>$
\item The average map descends to the quotient:
\[
\mu[m]:\frac{\overline{\mathscr{A}}_{|t|}[m]}{\left<\overline{\text{4T}}_{|t|}[m], \overline{\text{1T}}_{|t|}[m] \right>} \to \frac{\vec{\mathscr{A}}_{|t|}[m]}{\left<\vec{\text{6T}}_{|t|}[m], \vec{\text{1T}}_{|t|}[m] \right>}.
\]
\end{enumerate}
\end{lemma}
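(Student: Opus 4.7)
The plan is to verify properties (1) and (2) directly from the definitions of $\mu[m]$, $\overline{\text{1T}}_{|t|}[m]$, and $\overline{\text{4T}}_{|t|}[m]$; property (3) will then be automatic, since a homomorphism of free abelian groups that sends relators to relators descends to a homomorphism of quotients. For (1), note that a generator of $\langle \overline{\text{1T}}_{|t|}[m]\rangle$ is a signed dashed chord diagram $C$ containing an isolated chord with label $m+1$. Applying $\mu[m]$ produces the sum of $2^t$ signed arrow diagrams obtained by choosing an orientation on each chord of $C$. In each such diagram the distinguished chord becomes an isolated arrow still carrying the label $m+1$, so each summand is itself a generator of $\langle\vec{\text{1T}}_{|t|}[m]\rangle$, and hence the whole image lies in that subgroup.

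For (2), the idea is to apply $\mu[m]$ to each of the two independent 4T relators, $[\alpha_{12}^x,\alpha_{23}^z]+[\alpha_{13}^y,\alpha_{23}^z]$ and $[\alpha_{12}^x,\alpha_{23}^z]+[\alpha_{12}^x,\alpha_{13}^y]$ (obtained from the displayed $\overline{\text{4T}}_{|t|}[m]$ by using the antisymmetry $[b,a]=-[a,b]$), and to rewrite the result as a $\mathbb{Z}$-linear combination of $\vec{\text{6T}}_{|t|}[m]$-relators. Since $\mu[m]$ sends every undirected chord $\alpha_{ij}^{\ell}$ to the formal sum $a_{ij}^{\ell}+a_{ji}^{\ell}$, bilinearity of the bracket expands $\mu[m]\bigl([\alpha_{12}^x,\alpha_{23}^z]+[\alpha_{13}^y,\alpha_{23}^z]\bigr)$ into
\[
\bigl[a_{12}^x+a_{21}^x,\,a_{23}^z+a_{32}^z\bigr]+\bigl[a_{13}^y+a_{31}^y,\,a_{23}^z+a_{32}^z\bigr].
\]
Lemma~\ref{switchrel}(1) equates $[a_{12}^x,a_{23}^z+a_{32}^z]$ with $-[a_{13}^y,a_{23}^z+a_{32}^z]$ modulo $\vec{\text{6T}}_{|t|}[m]$ and thereby eliminates four of the eight resulting terms, while a relabelling of Lemma~\ref{switchrel}(3) via the cyclic substitution $(x,y,z)\mapsto(z,x,y)$ eliminates the remaining four. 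The second 4T relator is treated analogously, using parts~(2) and~(4) of Lemma~\ref{switchrel}. In each invocation one checks that the admissibility constraint ($x>y=z$, $y>x=z$, $z>x=y$, or $x=y=z=m+1$) carries over, which is automatic since the constraint is symmetric under every permutation of $\{x,y,z\}$.

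I expect the main obstacle to be the combinatorial bookkeeping in (2): matching the eight directed bracket terms produced by $\mu[m]$ on a single 4T relator against the correct identities in Lemma~\ref{switchrel}, tracking the signs that arise when antisymmetrizing, and verifying that each relabelling of $(x,y,z)$ needed to invoke Lemma~\ref{switchrel} produces a legal $\vec{\text{6T}}_{|t|}[m]$ relator. Since the admissibility constraint on $(x,y,z)$ is symmetric, no genuine difficulty arises at this step, but writing out the two 4T equations in full is unavoidable. Once (1) and (2) are established, (3) is immediate: the map $\mu[m]:\mathbb{Z}[\overline{\mathscr{A}}_{|t|}[m]]\to\mathbb{Z}[\vec{\mathscr{A}}_{|t|}[m]]$ sends the subgroup $\langle\overline{\text{4T}}_{|t|}[m],\overline{\text{1T}}_{|t|}[m]\rangle$ into $\langle\vec{\text{6T}}_{|t|}[m],\vec{\text{1T}}_{|t|}[m]\rangle$, so by the universal property of quotients it induces a well-defined map on the quotient groups.
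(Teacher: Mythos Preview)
Your proposal is correct and follows essentially the same approach as the paper: for (1) the paper also dismisses this as clear, and for (2) the paper likewise expands $\mu[m]$ of a $\overline{\text{4T}}_{|t|}[m]$ relator into a sum of directed brackets and reduces it to zero using pairs of identities from Lemma~\ref{switchrel}. The paper presents this as showing $\mu[m]\bigl([\alpha_{12}^x,\alpha_{23}^z]\bigr)=\mu[m]\bigl([\alpha_{13}^y,\alpha_{12}^x]\bigr)$ in the quotient (invoking parts (2) and (4) of Lemma~\ref{switchrel} with a relabelling), treating only one case explicitly and leaving the other to the reader, while you treat both 4T relators and pair (1) with (3) and (2) with (4); these are equivalent regroupings of the same eight terms. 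One small point worth making explicit in your write-up is the summation over all orientations of the $t-2$ undrawn chords, which the paper indicates by the symbol~$\sum$; your appeal to ``bilinearity of the bracket'' hides this but the argument is unaffected since each fixed choice of outside orientations yields a separate $\vec{\text{6T}}_{|t|}[m]$ relation.
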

\begin{proof} The first claim is clear from the definitions.  For the second claim, we compute $\mu[m]$ and apply relations from Lemma \ref{switchrel}. If $x \ne z$, set $y=\min\{x,z\}$.  If $x=z<m+1$, any $y>x$ will do.  If $x=z=m+1$, set $y=m+1$. The following computations are all performed in the quotient group $\vec{\mathscr{A}}_{|t|}[m]/\left<\vec{\text{6T}}_{|t|}[m]\right>$.
\begin{eqnarray*}
\mu[m]\left( \left[\alpha_{12}^x,\alpha_{23}^z \right] \right) &=& \sum \left[a_{12}^x,a_{23}^z \right]+\left[a_{21}^x,a_{23}^z \right]+\left[a_{12}^x,a_{32}^z \right]+\left[a_{21}^x,a_{32}^z \right] \\
&=& \sum \left[a_{13}^y,a_{12}^x \right]+\left[a_{13}^y,a_{21}^x \right]+\left[a_{31}^y,a_{21}^x \right]+\left[a_{31}^y,a_{12}^x \right] \\
&=& \mu[m]\left( \left[\alpha_{13}^y,\alpha_{12}^x \right] \right).
\end{eqnarray*}
Here the sum is taken over all fixed choices of the directions of the arrows outside three drawn vertical intervals. It follows that in the quotient group,
\[
\mu[m]\left( \left[\alpha_{12}^x,\alpha_{23}^z \right]-\left[\alpha_{13}^y,\alpha_{12}^x \right] \right)=0.
\]
This proves a case of the result.  The other cases follow by applying different identities from Lemma \ref{switchrel}.
\end{proof}

\subsection{Proof of Property \ref{prop5}:} We establish a lower bound on the rank of $\vec{\mathscr{X}}_t[m]$ by computing the rank of the free group $\text{Hom}_{\mathbb{Z}}(\overline{\mathscr{X}}_t[m],\mathbb{Q})$. In the sequel, we will make frequent use of the following isomorphism (see \cite{S}). Let $M$ be any $\mathbb{Z}$-module.
\[
\text{Hom}_{\mathbb{Z}} \left(M,\mathbb{Q}\right) \cong \text{Hom}_{\mathbb{Z}} \left(M,\text{Hom}_{\mathbb{Z}}(\mathbb{Q},\mathbb{Q})\right) \cong \text{Hom}_{\mathbb{Z}} \left(\mathbb{Q} \otimes M,\mathbb{Q}\right) 
\]
The crux of the proof revolves around the following commutative diagram:
\[
\xymatrix{
\mathbb{Q} \otimes \frac{\overline{\mathscr{A}}_{|t|}[m]}{\left<\overline{\text{4T}}_{|t|}[m], \overline{\text{1T}}_{|t|}[m] \right>}\ar[rr]^{2^t \mu'[m]} \ar[dr]_{\mu[m]} & & \mathbb{Q} \otimes \frac{\overline{\mathscr{A}}_{|t|}[m]}{\left<\overline{\text{6T}}_{|t|}[m], \overline{\text{1T}}_{|t|}[m] \right>} \\
& \mathbb{Q} \otimes \frac{\vec{\mathscr{A}}_{|t|}[m]}{\left<\vec{\text{6T}}_{|t|}[m], \vec{\text{1T}}_{|t|}[m] \right>} \ar[ur]_{\text{Bar}[m]} & \\
}.
\]
where $\mu'[m]=\frac{1}{2^t} \text{Bar}[m]\circ \mu[m]$. We note that for any $D \in \mathbb{Q} \otimes  \frac{\overline{\mathscr{A}}_{|t|}[m]}{\left<\overline{\text{4T}}_{|t|}[m], \overline{\text{1T}}_{|t|}[m] \right>}$, $\mu'[m](D)=D$ and hence $\mu'[m]$ is a surjection.

In addition, we have the following \emph{two-term} or \emph{commutativity relations}:
\[
\underline{\overline{\text{2T}}_{|t|}[m]:}\begin{array}{c} \scalebox{.33}{\psfig{figure=fourtern3.eps}} \end{array}=\begin{array}{c} \scalebox{.33}{\psfig{figure=fourtern4.eps}} \end{array}.
\]
Here, $y$ and $z$ are any two labels $\le m+1$. 

\begin{lemma} \label{sixtermtwo} There is an isomorphism of groups:
\[
\mathbb{Q} \otimes \frac{\overline{\mathscr{A}}_{|t|}[m]}{\left<\overline{\text{6T}}_{|t|}[m], \overline{\text{1T}}_{|t|}[m] \right>} \cong \mathbb{Q} \otimes \frac{\overline{\mathscr{A}}_{|t|}[m]}{\left<\overline{\text{2T}}_{|t|}[m], \overline{\text{1T}}_{|t|}[m]\right>}.
\]
\end{lemma}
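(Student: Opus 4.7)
My plan is to establish the isomorphism by identifying both sides as quotients of $\mathbb{Q} \otimes \overline{\mathscr{A}}_{|t|}[m]$ by the same rational ideal. It suffices to show that the rational ideals generated by $\overline{\text{6T}}_{|t|}[m], \overline{\text{1T}}_{|t|}[m]$ and by $\overline{\text{2T}}_{|t|}[m], \overline{\text{1T}}_{|t|}[m]$ agree in $\mathbb{Q} \otimes \overline{\mathscr{A}}_{|t|}[m]$. One containment is immediate: in the bracket notation of Section~\ref{foursix}, each term in the $\overline{\text{6T}}_{|t|}[m]$ relation is a commutator $[\alpha_{ij}^a, \alpha_{ik}^b]$ of two chords sharing a strand endpoint, and the $\overline{\text{2T}}_{|t|}[m]$ relation is precisely the vanishing of such a commutator. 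Hence $\overline{\text{2T}}_{|t|}[m]$ implies $\overline{\text{6T}}_{|t|}[m]$, and the identity on $\overline{\mathscr{A}}_{|t|}[m]$ descends to a surjection
\[
\mathbb{Q} \otimes \frac{\overline{\mathscr{A}}_{|t|}[m]}{\left<\overline{\text{6T}}_{|t|}[m], \overline{\text{1T}}_{|t|}[m] \right>} \twoheadrightarrow \mathbb{Q} \otimes \frac{\overline{\mathscr{A}}_{|t|}[m]}{\left<\overline{\text{2T}}_{|t|}[m], \overline{\text{1T}}_{|t|}[m]\right>}.
\]

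For the harder direction, I will apply the Bar map to the identities of Lemma~\ref{switchrel}. Since Bar identifies $\alpha_{ij}^a$ with $\alpha_{ji}^a$, the left-hand side of identity~(1), namely $[a_{12}^x, a_{23}^z] + [a_{12}^x, a_{32}^z]$, collapses to $2[\alpha_{12}^x, \alpha_{23}^z]$, and the right-hand side collapses to $2[\alpha_{23}^z, \alpha_{13}^y]$. Dividing by $2$ (the only step requiring $\mathbb{Q}$-coefficients) and using the antisymmetry $[\alpha,\beta]=-[\beta,\alpha]$ gives $[\alpha_{12}^x, \alpha_{23}^z] + [\alpha_{13}^y, \alpha_{23}^z] = 0$ in the undirected 6T quotient. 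Subtracting this from the Bar of the $\vec{\text{6T}}_{|t|}[m]$ relation $[\alpha_{12}^x, \alpha_{23}^z] + [\alpha_{12}^x, \alpha_{13}^y] + [\alpha_{13}^y, \alpha_{23}^z] = 0$ eliminates two summands and yields $[\alpha_{12}^x, \alpha_{13}^y] = 0$. A parallel reduction of identity~(2) of Lemma~\ref{switchrel} similarly yields $[\alpha_{12}^x, \alpha_{23}^z] + [\alpha_{12}^x, \alpha_{13}^y] = 0$, and combining with the previous vanishing forces $[\alpha_{12}^x, \alpha_{23}^z] = 0$; substituting back into the 6T relation then forces $[\alpha_{13}^y, \alpha_{23}^z] = 0$. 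Thus all three commutator brackets in any admissible three-strand window with labels $(x, y, z)$ satisfying the 6T constraints vanish rationally.

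To apply this to an arbitrary $\overline{\text{2T}}_{|t|}[m]$ relation, indexed by two labels $a, b \le m+1$, I will exhibit a valid third label $x$ making $(x, a, b)$ into an admissible 6T triple. A short case analysis does this: take $x = m+1$ if $a = b$ (using either $x > a = b$ or $x = a = b = m+1$), take $x = b$ if $a > b$, and take $x = a$ if $b > a$. The main obstacle will be the locality bookkeeping: the $\overline{\text{2T}}_{|t|}[m]$ relation concerns only two chords inside a $t$-chord diagram, while the derivation above invokes a third chord $\alpha_{12}^x$, so I will need to verify that the combinations of Bar of Lemma~\ref{switchrel} and the $\overline{\text{6T}}_{|t|}[m]$ relation employed above can be arranged to leave the other $t-2$ chords untouched, and that by cyclically relabelling the three chosen strands so that the ``shared strand'' can be placed into any of the three positions, every instance of the $\overline{\text{2T}}_{|t|}[m]$ relation is recovered. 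Once this bookkeeping is complete, the surjection displayed above has trivial kernel rationally, giving the claimed isomorphism.
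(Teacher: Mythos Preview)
Your argument is correct. The approach differs from the paper's in packaging, though both rest on Lemma~\ref{switchrel}. The paper does not apply $\text{Bar}$ directly to those identities; instead it invokes the commutative triangle preceding the lemma: since $\mu'[m]=\tfrac{1}{2^t}\,\text{Bar}[m]\circ\mu[m]$ is literally the identity on chord diagrams, and since the preceding lemma shows $\mu[m]$ carries $\overline{\text{4T}}_{|t|}[m]$ into the directed $\vec{\text{6T}}_{|t|}[m]$ ideal, every $\overline{\text{4T}}_{|t|}[m]$ relation already vanishes in the undirected 6T quotient. Writing a single $\overline{\text{6T}}_{|t|}[m]$ as ``4T piece $+$ 2T piece'' then forces the 2T piece to vanish. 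The label case analysis (choosing the third label $x$ compatible with the 6T constraints) is the same in both proofs.

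Your route bypasses the 4T intermediary: applying $\text{Bar}$ to identities (1) and (2) of Lemma~\ref{switchrel} collapses each side to a doubled undirected bracket, and over $\mathbb{Q}$ this yields $[\alpha_{12}^x,\alpha_{23}^z]+[\alpha_{13}^y,\alpha_{23}^z]=0$ and $[\alpha_{12}^x,\alpha_{23}^z]+[\alpha_{12}^x,\alpha_{13}^y]=0$; combining with $\overline{\text{6T}}_{|t|}[m]$ kills all three brackets individually. This is slightly more hands-on but avoids the $\mu[m]$ machinery entirely. Your ``locality bookkeeping'' worry is harmless: each diagram in both the 6T relation and the barred switchrel identities carries exactly two drawn chords plus the fixed $t-2$ background chords, so no genuine third chord is introduced in any single diagram---only the label and the auxiliary two-chord configurations appear in intermediate terms, and these cancel. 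The cyclic relabelling of strands you mention is indeed enough to hit every instance of $\overline{\text{2T}}_{|t|}[m]$.
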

\begin{proof} Since $\mu'[m]$ is a surjection, it follows that $\mathbb{Q} \otimes  \frac{\overline{\mathscr{A}}_{|t|}[m]}{\left<\overline{\text{6T}}_{|t|}[m], \overline{\text{1T}}_{|t|}[m] \right>}$ is a homomorphic image of $ \mathbb{Q} \otimes \frac{\overline{\mathscr{A}}_{|t|}[m]}{\left<\overline{\text{4T}}_{|t|}[m], \overline{\text{1T}}_{|t|}[m] \right>}$.  Since the four term relations are satisfied in the second group, they must also be satisfied in the first. Therefore, both the four term and six term relations are satisfied in $\mathbb{Q} \otimes \frac{\overline{\mathscr{A}}_{|t|}[m]}{\left<\overline{\text{6T}}_{|t|}[m], \overline{\text{1T}}_{|t|}[m]\right>}$.

Suppose that $y,z$ are given, $1 \le y,z \le m+1$.  If $y=z=m+1$, set $x=m+1$.  If $y \ne z$, set $x= \text{min}\{y,z \}$.  If $y=z<m+1$, choose any $x$ satisfying $z <x \le m+1$.  Then we write out the corresponding undirected six-term relation:  
\[
\underbrace{\begin{array}{c} \scalebox{.33}{\psfig{figure=fourtern1.eps}} \end{array}-\begin{array}{c} \scalebox{.33}{\psfig{figure=fourtern2.eps}} \end{array}+
\begin{array}{c} \scalebox{.33}{\psfig{figure=fourtern5.eps}} \end{array}-\begin{array}{c} \scalebox{.33}{\psfig{figure=fourtern6.eps}} \end{array}}_{\overline{\text{4T}}_{|t|}[m]} +
\underbrace{\begin{array}{c} \scalebox{.33}{\psfig{figure=fourtern3.eps}} \end{array}-\begin{array}{c} \scalebox{.33}{\psfig{figure=fourtern4.eps}} \end{array}}_{\overline{\text{2T}}_{|t|}[m]} =0.
\]
Since the left bracketed expression vanishes, it follows that the right bracket expression is also zero.  Hence, all commutativity relations are satisfied.

Now, the six term relation $\overline{\text{6T}}_{|t|}[m]$ can be written as a sum of three two term relations $\overline{\text{2T}}_{|t|}[m]$.  This proves the lemma.
\end{proof}

It follows that the equivalence classes of $\mathbb{Q} \otimes \frac{\overline{\mathscr{A}}_{|t|}[m]}{\left<\overline{\text{2T}}_{|t|}[m], \overline{\text{1T}}_{|t|}[m]\right>}$ may be identified with monomials in the variables $x_1,\ldots,x_m$ with total degree $t$:
\[
x_1^{t_1} \cdot x_2^{t_2} \cdot \ldots \cdot x_m^{t_m}.
\]
Indeed, since the commutativity relations are satisfied, diagrams with the same number of arrows with the same labellings are necessarily equivalent. Likewise, diagrams with an arrow labelled $m+1$ are equivalent to a one term relation.  Note that the number of such monomials is given by the so-called multinomial coefficient \cite{B}.  

Under this correspondence, the map in Lemma \ref{sixtermmap} sends $F_{t_1,\ldots,t_s}[m|k_1,\ldots k_s]$ to the monomial $t_1! t_2 ! \cdot \ldots \cdot t_s! x_{k_1}^{t_1} \cdot\ldots \cdot x_{k_s}^{t_s}$. It follows that the rightmost map in Lemma \ref{sixtermmap} is a surjection and that the sequence extends to a short exact sequence. Also, our argument has shown that the rightmost group in Lemma \ref{sixtermmap} is free. Thus, the short exact sequence splits.  It follows by induction that the rank of $\text{Hom}_{\mathbb{Z}}(\overline{\mathscr{X}}_t[m],\mathbb{Q})$ is the sum of multinomial coefficients and the rank of $\text{Hom}_{\mathbb{Z}}(\left<\underline{\hspace{1cm}}\right>,\mathbb{Q})$ (i.e. the dual space of the diagram containing no arrows):
\[
1+\sum_{k=1}^t \left( \hspace{-.2cm} \left(\begin{array}{c} m \\ k \end{array} \right) \hspace{-.2cm} \right)= 1+\sum_{k=1}^t \left(\begin{array}{c} m+k-1\\ k \end{array}\right)=1+\frac{t+1}{m}\left(\begin{array}{c}m+t \\ t+1 \end{array}\right)-1.
\]	
The last equality follows from a computation in \emph{Mathematica}. This establishes the lower bound on the rank.
\bibliographystyle{plain}
\bibliography{bib_comm}

\end{document}